\documentclass[11pt]{amsart}
\usepackage[dvipsnames,svgnames,x11names]{xcolor}
\usepackage[T1]{fontenc}
\usepackage{textgreek,bbm,url,graphicx,verbatim,amssymb,enumerate,mathtools}
\usepackage{amsmath,amsthm,amssymb}
\usepackage{amsfonts}
\usepackage[pagebackref,colorlinks,citecolor=Mahogany,linkcolor=Mahogany,urlcolor=Mahogany,filecolor=Mahogany]{hyperref}
\usepackage[capitalize]{cleveref}
\usepackage[mathscr]{euscript}

\usepackage{fancyhdr}
\usepackage[a4paper,
bindingoffset=0.2in,
left=0.8in,
right=1in,
top=1in,
bottom=1in,
footskip=.25in,
headheight=15pt]{geometry}
\usepackage{enumitem}
\setlist[enumerate,1]{label=\textup{(\arabic*)}}

\makeatletter
\DeclareFontFamily{OMX}{MnSymbolE}{}
\DeclareSymbolFont{MnLargeSymbols}{OMX}{MnSymbolE}{m}{n}
\SetSymbolFont{MnLargeSymbols}{bold}{OMX}{MnSymbolE}{b}{n}
\DeclareFontShape{OMX}{MnSymbolE}{m}{n}{
<-6>  MnSymbolE5
<6-7>  MnSymbolE6
<7-8>  MnSymbolE7
<8-9>  MnSymbolE8
<9-10> MnSymbolE9
<10-12> MnSymbolE10
<12->   MnSymbolE12
}{}
\DeclareFontShape{OMX}{MnSymbolE}{b}{n}{
<-6>  MnSymbolE-Bold5
<6-7>  MnSymbolE-Bold6
<7-8>  MnSymbolE-Bold7
<8-9>  MnSymbolE-Bold8
<9-10> MnSymbolE-Bold9
<10-12> MnSymbolE-Bold10
<12->   MnSymbolE-Bold12
}{}

\let\llangle\@undefined
\let\rrangle\@undefined
\DeclareMathDelimiter{\llangle}{\mathopen}%
{MnLargeSymbols}{'164}{MnLargeSymbols}{'164}
\DeclareMathDelimiter{\rrangle}{\mathclose}%
{MnLargeSymbols}{'171}{MnLargeSymbols}{'171}
\makeatother

\def\XXint#1#2#3{{\setbox0=\hbox{$#1{#2#3}{\int}$ }
\vcenter{\hbox{$#2#3$ }}\kern-.6\wd0}}

\newtheorem{theorem}{Theorem}[section]
\newtheorem{lemma}[theorem]{Lemma}
\newtheorem{proposition}[theorem]{Proposition}
\newtheorem{corollary}[theorem]{Corollary}
\newtheorem*{corollary*}{Corollary}

\newtheorem{atheorem}{Theorem}

\newtheorem{acorollary}[atheorem]{Corollary}

\theoremstyle{definition}
\newtheorem{definition}[theorem]{Definition}

\theoremstyle{remark}

\newtheorem{remark}[theorem]{Remark}

\crefname{lemma}{Lemma}{Lemmas}
\crefname{proposition}{Proposition}{Propositions}
\crefname{corollary}{Corollary}{Corollaries}
\crefname{atheorem}{Theorem}{Theorems}
\crefname{acorollary}{Corollary}{Corollaries}

\newcommand{\subref}[2]{\hyperref[#2]{\ref{#1}.\ref{#2}}}

\numberwithin{equation}{section}

\newcommand{\R}{\mathbb{R}}

\newcommand{\N}{\mathbb{N}}

\newcommand{\ind}{\mathbf{1}}

\newcommand{\RR}{\mathcal{R}}

\definecolor{darkgreen}{HTML}{005239}

\author{Francesco D'Emilio}
\author{Brett D. Wick}
\address{Francesco D'Emilio\hfill\break\indent
Department of Mathematics \hfill\break\indent
Washington University in St. Louis \hfill\break\indent
One Brookings Drive \hfill\break\indent
St. Louis, MO 63130 USA}
\email{demilio@wustl.edu}
\address{Brett D. Wick\hfill\break\indent
Department of Mathematics \hfill\break\indent
Washington University in St. Louis \hfill\break\indent
One Brookings Drive \hfill\break\indent
St. Louis, MO 63130 USA}
\email{bwick@wustl.edu}

\title{Martingale Transforms and Compensated Bellman Estimates for Dunkl Riesz Transforms}

\thanks{\textbf{Funding Acknowledgment:} Francesco D'Emilio is partially supported by the Simons Foundation through a Simons Dissertation Fellowship. Brett D. Wick's research is supported in part by National Science Foundation DMS award \#2349868 and Australian Research Council grant DP 220100285.}

\begin{document}
\begin{abstract}
We develop a martingale-transform framework for Dunkl harmonic analysis and apply it to prove $L^p$ estimates for the Dunkl Riesz transforms using the martingale decomposition of the Dunkl process. A fundamental difference from the classical Brownian setting is that the martingale representing the Dunkl Riesz transform of a function is not, in general, differentially subordinated to the Poisson martingale of the function itself. Our main argument bypasses this obstruction by applying Burkholder's Bellman function directly. The possible positive defect of the continuous It\^o drift is compensated by the negative contribution produced by the reflection jumps. This yields $L^p$ estimates for single Dunkl Riesz transforms for $1<p<\infty$, and vector-valued estimates for $p\geq2$ with a spectral dependence on the root system. For $G$-invariant functions, differential subordination can be recovered and the resulting estimates are independent of the root system.
\end{abstract}
\subjclass[2020]{Primary 42B20; Secondary 42B15, 60G44, 60G46}

\keywords{Dunkl Riesz transforms, Dunkl processes,
martingale transforms, Bellman functions,
differential subordination, dimension-free estimates}
\maketitle
\section{Introduction}

The purpose of this paper is to generalize the martingale method of Gundy and Varopoulos \cite{GV} to the Dunkl setting and to use it to obtain quantitative $L^p$ estimates for Dunkl Riesz transforms. In the Euclidean framework, the basic idea is to lift a function $f$ to the upper half-space through its Poisson extension and evaluate it along a space-time Brownian path. The resulting process is a martingale whose stochastic differential is determined by the gradient of the extension. Applying a suitable matrix to this stochastic gradient produces a martingale transform, and a conditional projection at the boundary recovers the singular integral under consideration. For the Riesz transforms, this construction is particularly well adapted to the geometry of Brownian motion: with a suitable choice of representation, the transformed martingale is differentially subordinate and orthogonal to the original Poisson martingale. Sharp martingale inequalities of Burkholder and Ba\~nuelos--Wang, see for example \cite{burk,bur88,BW95,BW96,W95}, can then be transferred to the corresponding singular integral operators. For more applications in probability theory and harmonic analysis, see \cite{Ban2,osekowski,dompet} and the references therein. \\

Dunkl harmonic analysis provides a natural nonlocal analogue of classical Fourier analysis in the presence of finite reflection symmetries. Given a root system $\RR\subset\R^n$ and an associated multiplicity function $\kappa$, the usual partial derivatives are replaced by the differential-difference operators introduced by Dunkl in \cite{Dunkl87}. These operators give rise to analogues of the Laplacian, Fourier transform, heat and Poisson semigroups, and Riesz transforms. The theory also appears naturally in mathematical physics, particularly in connection with Calogero--Moser--Sutherland type systems; see for example \cite{LV}.

From a probabilistic point of view, the Dunkl Laplacian generates a Markov process which combines a Brownian part with a purely discontinuous component produced by reflections across the hyperplanes of the root system. A precise description of this process was obtained by Gallardo and Yor in \cite{GallardoYor2006_Chaotic}. Their martingale decomposition expresses the Dunkl process as the sum of a standard Brownian motion and a family of orthogonal purely discontinuous martingales corresponding to different reflection directions. Unlike in the Euclidean setting, the stochastic differential of the Dunkl Poisson martingale is naturally described not by the Dunkl gradient itself, but by the gradient associated with the carr\'e du champ operator, which contains coordinates that measure the difference of the function across each reflection hyperplane. Hence, the stochastic space on which a Dunkl martingale transform acts is larger than the physical space, due to the additional jump geometry.

One of the structural points of the paper is to make this correspondence explicit. Starting from a matrix representation of a Euclidean Fourier multiplier, we introduce a canonical lift to the enlarged stochastic space associated with the Dunkl process. The conditional projection of the corresponding martingale transform is then a Dunkl Fourier multiplier with the same symbol. In particular, this construction identifies the matrices associated with the first-order Dunkl Riesz transforms and also explains the different stochastic representations of the same multiplier which will be important below.

\subsection{Main results and technical novelties}

For $j=1,\ldots,n$, the Dunkl Riesz transform $R_j$ is defined by
\[
\mathcal F_\kappa(R_jf)(\xi)
=
-i\frac{\xi_j}{|\xi|}
\mathcal F_\kappa f(\xi),
\qquad \xi\neq0,
\]
and we write the associated Riesz vector as
\[
\mathbf Rf=(R_1f,\ldots,R_nf).
\]

The study of Riesz transforms in the Dunkl setting goes back to \cite{thangxu} and \cite{amsi}, yet the first dimension-free estimates for the Dunkl Riesz vector for a general root system were obtained by Hejna \cite{Dimensionless} through a bilinear Bellman function argument. Her main result gives, for $1<p<\infty$,
\[
\|\mathbf Rf\|_{L^p(dw)}
\leq
144(p^*-1)
\left(
\sum_{\alpha\in\RR}\kappa(\alpha)+2^7
\right)
\|f\|_{L^p(dw)},
\]
where $p^*=\max\{p,p'\}$. If $f$ is $G$-invariant, there is no dependence on the root system:
\[
\|\mathbf Rf\|_{L^p(dw)}
\leq
144(p^*-1)\|f\|_{L^p(dw)}.
\]

The martingale representation developed here leads to estimates with a different dependence on the geometry of the root system. For a fixed coordinate $j$, set
\[
\theta_j
:=
\left(
1+
\sum_{\alpha\in\RR_+}
\kappa(\alpha)\alpha_j^2
\right)^{1/2}.
\]
In the following, we assume $|\alpha|=2$ for every $\alpha \in \RR_+$. The second main result is as follows.
\begin{atheorem}\label[atheorem]{single riesz}
For every $1<p<\infty$ and every $j\in\{1,\ldots,n\}$,
\[
\|R_jf\|_{L^p(dw)}
\leq
2(p^*-1)\theta_j
\|f\|_{L^p(dw)}.
\]
If $f\in L^p(dw)$ is $G$-invariant, then
\[
\|R_jf\|_{L^p(dw)}
\leq
2(p^*-1)\|f\|_{L^p(dw)}.
\]
\end{atheorem}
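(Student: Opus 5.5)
We follow the Gundy--Varopoulos scheme adapted to the Dunkl process, using the Gallardo--Yor martingale decomposition. For $f$ nice (Schwartz-type, then by density), let $u(x,t)=P_tf(x)$ be the Dunkl Poisson extension. Run the space-time process $Z_s=(X_s,T_s)$, where $X$ is the Dunkl process started from $w$-distributed (in the limiting sense) points at height $T_0=a$, and $T_s=a-B^0_s$ is stopped at $\tau=\inf\{s:T_s=0\}$. By It\^o's formula for the Dunkl process, $M_s=u(Z_{s\wedge\tau})$ is a martingale, since $u$ is $\Delta_\kappa+\partial_t^2$-harmonic. Its differential splits into a continuous part $\nabla_{x,t}u\cdot dW$ and a jump part
\[
\sum_{\alpha\in\RR_+}\bigl(u(\sigma_\alpha X_{s-},T_s)-u(X_{s-},T_s)\bigr)\,dN^\alpha_s
\]
compensated at rate $\kappa(\alpha)|\langle\alpha,X\rangle|^{-2}|\alpha|^2/2$. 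These are precisely the coordinates of the carr\'e du champ gradient. We then define the transform $N=A\star M$ using the canonical lift of the Euclidean matrix for $R_j$: on the continuous part, $A$ maps $\partial_t u\,dW^0$ to $dW^j$ and $\partial_j u\,dW^j$ to $dW^0$ with the appropriate signs. On each reflection coordinate, $A$ acts by multiplication by $\pm\alpha_j/|\alpha|$ times a difference quotient. We take from the earlier projection result that $E[N_\tau\mid X_\tau=x]\to R_jf(x)$ as $a\to\infty$, up to the normalizing constant. Since conditional expectation is an $L^p$ contraction, it then suffices to prove $\|N_\tau\|_p\le 2(p^*-1)\theta_j\|M_\tau\|_p$, uniformly in $a$.

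The key step is to replace differential subordination, which fails here, by a direct Bellman argument. Take Burkholder's function $U$ with $U(x,y)\ge |y|^p-\beta^p|x|^p$ and $U(x,y)\le0$ when $|y|\le|x|$, and apply It\^o to $U(M_s,N_s/\theta)$, where $\theta$ is to be chosen. The continuous part of the drift is controlled by the concavity inequality $U_{xx}h^2+2U_{xy}hk+U_{yy}k^2\le -c(|h|^2-|k|^2)$. Here $|k|^2$ involves $\partial_t u$, $\partial_ju$ and the reflection-induced terms in the Dunkl derivative $T_j$. These extra terms are bounded via Cauchy--Schwarz by $\sum_\alpha\kappa(\alpha)\alpha_j^2$ times the jump energy, and this is the origin of $\theta_j^2$. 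This is where a positive defect may appear. The jump part contributes
\[
U(M+\Delta M,N+\Delta N)-U(M,N)-\nabla U\cdot(\Delta M,\Delta N),
\]
and by the zigzag concavity of $U$ this is $\le -c'(|\Delta M|^2-|\Delta N/\theta|^2)$ in a form compatible with the continuous estimate. Choosing $\theta=\theta_j$ (with $|\alpha|=2$ fixing the normalisation), we show that the negative jump contribution absorbs the continuous defect, so that the total drift is $\le0$. Then $E\,U(M_\tau,N_\tau/\theta_j)\le E\,U(M_0,N_0/\theta_j)\le0$ because $N_0=0$. Together with the Burkholder constant this gives the bound $(p^*-1)\theta_j$, and the factor $2$ comes from the projection and normalisation, as in the Euclidean Gundy--Varopoulos constant.

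For $G$-invariant $f$, $u(\cdot,t)$ is $G$-invariant, so every jump of $M$ vanishes and $T_ju=\partial_ju$. Then $N$ is genuinely differentially subordinate and orthogonal to $M$, so Burkholder/Ba\~nuelos--Wang applies directly with $\theta=1$. The main obstacle is the compensation step: the jump term of $U$ must control the Cauchy--Schwarz defect pointwise in $(M,N)$. This requires a quantitative lower bound on the negative second-difference of $U$ along finite jumps, not merely infinitesimal concavity. I would first try using the fact that $U$ is biconcave and that $|y|\le|x|$-type zigzag concavity holds for finite increments, i.e. that $t\mapsto U(x+th,y+tk)$ is concave whenever $|k|\le|h|$.
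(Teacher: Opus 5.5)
Your argument has a genuine gap, and it is located at the compensation step you flag as the main obstacle. The cause is your use of the full canonical lift $A_j$. With that choice the transform itself has reflection jumps, $\Delta N_s=-\beta^j_\alpha\,\partial_yU_f(Z_{s-})\,\Delta M^\alpha_s$, while the source jumps are $\Delta M_s=h_{\alpha,s}\propto J^\alpha U_f(Z_{s-})$. There is no pointwise comparison between the two: if $J^\alpha U_f(Z_{s-})=0$ and $\partial_yU_f(Z_{s-})\neq0$, then $\Delta M_s=0\neq\Delta N_s$.

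Zigzag concavity only gives $\mathcal B(w+h,v+k)-\mathcal B(w,v)-\nabla\mathcal B(w,v)\cdot(h,k)\leq0$ when $|k|\leq|h|$, and that is exactly the subordination that fails here. The quantitative bound you need, with right-hand side $-c'(h^2-k^2)$, is false for $p>2$. Taking $h=0$, the left side grows like $C_p|k|^p$, while $|k|\propto|\langle\alpha,X_{s-}\rangle\,\partial_yU_f|$ is a macroscopic reflection jump, not an infinitesimal one. So the compensated jump drift cannot be shown to absorb the continuous defect pointwise.

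The same problem breaks your $G$-invariant argument. There $M$ is continuous, but $N$ still jumps, so $[M]-[N]$ decreases at reflection times and $N\not\ll M$. Had subordination held, Ba\~nuelos--Wang would even give the cotangent constant, which is not known in this setting. Your accounting of the factor $2$ is also wrong. The full lift projects exactly onto $R_j$ ($\mathcal T_{A_j}=R_j$), so a successful run of your argument would give $(p^*-1)\theta_j$, not $2(p^*-1)\theta_j$.

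The missing idea is to replace $A_j$ by its lower-triangular part $L_j$. Then $\mathcal T_{L_j}=\frac12R_j$, which is the true source of the $2$. The transform $V=\theta_j^{-1}\int T_jU_f(Z_{s-})\,dY_s$ is continuous, so jumps occur only in the first variable of $\mathcal B$.

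The two drift terms then cancel as follows.
\begin{itemize}
\item Continuous drift: complete the square in $\partial_yU_f$ and apply Cauchy--Schwarz to $T_jU_f/\theta_j=c_0\partial_jU_f+\sum_\alpha c_\alpha J^\alpha U_f$. This bounds the continuous drift by $\frac12\Lambda_p\sum_\alpha|J^\alpha U_f|^2$, where $\Lambda_p=\det\operatorname{Hess}\mathcal B/\partial_{ww}\mathcal B$ is the Schur complement.
\item Jump drift: use the one-variable finite-difference inequality $\mathcal B(w+h,v)-\mathcal B(w,v)-\partial_w\mathcal B(w,v)h\leq-\frac12\Lambda_p(w,v)h^2$, proved for $p\geq2$. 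Combined with $r_\alpha h_{\alpha,s}^2=|J^\alpha U_f|^2$, it gives a compensated jump drift of exactly the opposite size.
\end{itemize}

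For invariant $f$, $L_j$ is a contraction on the reflection-free subspace, so $V$ is a genuinely continuous, differentially subordinate transform. Finally, since the finite-difference inequality uses $p\geq2$, the range $1<p<2$ should be obtained by duality from $R_j^*=-R_j$, not by a direct Bellman argument.
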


For the vector estimate, the natural root-system parameter is
\[
\Theta_\kappa
:=
\left\|
I_{\R^n}+M_\kappa
\right\|_{\mathrm{op}}^{1/2},
\qquad
M_\kappa:=\sum_{\alpha\in\RR_+}
\kappa(\alpha)\,\alpha\otimes\alpha.
\]

\begin{atheorem}\label[atheorem]{vector}
For every $p\geq2$,
\[
\|\mathbf Rf\|_{L^p(dw;\ell_n^2)}
\leq
2(p-1)\Theta_\kappa
\|f\|_{L^p(dw)}.
\]
Moreover, for every $1<p<\infty$ and every $G$-invariant
$f\in L^p(dw)$,
\[
\|\mathbf Rf\|_{L^p(dw;\ell_n^2)}
\leq
2(p^*-1)\|f\|_{L^p(dw)}.
\]
\end{atheorem}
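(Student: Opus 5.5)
The plan is to represent $\mathbf Rf$ as the conditional projection of a vector martingale transform of the Dunkl Poisson martingale, and then run Burkholder's Bellman function for the Hilbert-space-valued pair $(X,Y)$, with $X$ scalar and $Y$ valued in $\ell^2_n$. Let $u(x,t)=P_tf(x)$ and let $(B_t,Z_t)$ be the space-time Dunkl process started at height $T$ and stopped at $t=0$. Set $X_s=u(Z_s,B_s)$. The It\^o/L\'evy decomposition of Gallardo--Yor gives
\[
dX=\partial_tu\,dB+\nabla u\cdot dW+\sum_{\alpha}\bigl(u(\sigma_\alpha Z_{-},\cdot)-u(Z_{-},\cdot)\bigr)\,d\tilde N_\alpha .
\]
Using the canonical lift of the Riesz matrices to the enlarged stochastic space, define $Y^{(j)}$ by
\[
dY^{(j)}=\partial_ju\,dB-\partial_tu\,dW_j+(\text{lifted jump part}).
\]
By the conditional-projection result for lifted multipliers, $\mathbb E[Y^{(j)}_\infty\mid Z_\infty=x]$ recovers $R_jf(x)$, up to letting $T\to\infty$. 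By Jensen and duality, it then suffices to prove $\mathbb E|Y_\infty|^p\le (C(p-1)\Theta_\kappa)^p\,\mathbb E|X_\infty|^p$.

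For the Bellman step, I use Burkholder's function
\[
U(x,y)=p\Bigl(1-\tfrac1{p^*}\Bigr)^{p-1}\bigl(|y|-(p^*-1)|x|\bigr)\bigl(|x|+|y|\bigr)^{p-1},
\]
applied to the rescaled pair $(x,y/c)$ with $c=2\Theta_\kappa$. Recall that $U(x,y)\ge |y|^p-(p^*-1)^p|x|^p$ and that $U$ is zero-concave. Apply It\^o's formula to $U(X,Y/c)$. The continuous drift is $\tfrac12\mathrm{tr}(D^2U\cdot A)$, where $A$ is the joint quadratic variation density. Under differential subordination and orthogonality this is $\le0$. Here $d\langle Y\rangle_c=|\nabla u|^2+|\partial_tu|^2$ matches $d\langle X\rangle_c$, so the continuous parts are subordinate. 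The trouble is that the lifted jump coefficient of $Y$ need not be dominated by the jump of $X$. The continuous drift can therefore be positive in the direction that uses Burkholder's key inequality
\[
U_{xx}h^2+2U_{xy}hk+U_{yy}k^2\le -c_p\,(\dots)(h^2-|k|^2),
\]
with the defect proportional to $|k|^2-h^2$.

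The key estimate, and the main obstacle, is to show that this positive defect is compensated by the jump term
\[
\sum_\alpha\kappa(\alpha)\,\frac{|\langle\alpha,Z\rangle|^{-2}}{\ldots}\bigl[U(X+\Delta X,(Y+\Delta Y)/c)-U-\nabla U\cdot(\Delta X,\Delta Y/c)\bigr].
\]
By concavity of $U$ along the relevant segments, this term is nonpositive, and bounded by $-c_p\cdot(\text{jump energy of }X)$. The lifted jump of $Y^{(j)}$ is controlled by $\alpha_j\,(u-u\circ\sigma_\alpha)$ times the size of $\partial_tu$. So I would bound the excess quadratic variation of $Y$ over $X$ by $\langle M_\kappa\,\cdot,\cdot\rangle$ applied to the jump coordinates of the carré du champ gradient. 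Cauchy--Schwarz with the matrix $I+M_\kappa$ then shows that $d\langle Y/c\rangle\le d\langle X\rangle$ in the aggregated sense, with constant $\|I+M_\kappa\|_{\mathrm{op}}\le\Theta_\kappa^2$. The factor $2$ absorbs the mixed terms; this is where $p\ge2$ is used. For $p\ge2$, the sign structure of $U_{yy}$, namely $U$ being concave in $y$ with the correct sign of $U_{xy}$ in the region $|y|\ge(p-1)|x|$, allows the compensation without orthogonality. For $p<2$ this step fails, which explains the restriction. Taking expectations, we get $\mathbb E U(X_\infty,Y_\infty/c)\le \mathbb EU(X_0,0)\le0$, giving $\|\mathbf Rf\|_p\le 2(p-1)\Theta_\kappa\|f\|_p$.

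For $G$-invariant $f$, the function $u(\cdot,t)$ is $G$-invariant, so all jump terms $u\circ\sigma_\alpha-u$ vanish. Likewise the difference parts of the Dunkl derivatives vanish, so $R_j$ coincides with the symbol acting on the gradient. In that case $X$ and $Y$ are continuous, and $Y$ is differentially subordinate to $X$ since $|\nabla u|^2+|\partial_tu|^2$ appears on both sides. Burkholder's vector-valued inequality then applies directly for all $1<p<\infty$ with constant $p^*-1$. The extra factor $2$ accounts for the passage through the conditional projection with the non-orthogonal representation, which I keep for uniformity. Burkholder's sharp estimate in fact gives $p^*-1$, which is stronger than the stated $2(p^*-1)$.
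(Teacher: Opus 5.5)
There is a genuine gap, and it lies in the choice of representation rather than in the Bellman calculus. Your heuristic, that a positive continuous defect should be paid for by the reflection jumps of $X$, is the right one. The problem is that you use the full canonical lift $A_j$, whose projection is $R_j$ itself. For that lift (which involves $T_ju$, not $\partial_ju$) one has
\[
dY^{(j)}=T_ju\,dB-\partial_tu\,dW_j-\sum_{\alpha\in\RR_+}\beta^j_\alpha\,\partial_tu\,dM^\alpha ,
\]
where $M^\alpha$ are the Gallardo--Yor reflection martingales. Two consequences follow.

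\begin{enumerate}
\item The continuous bracket of the $\ell^2_n$-valued $Y$ is $\bigl(|\nabla_\kappa u|^2+n|\partial_tu|^2\bigr)dt$, not $|\nabla u|^2+|\partial_tu|^2$. Already for $\kappa=0$ the stacked full lifts have norm $\sqrt n$.
\item The jump of $Y^{(j)}$ at a reflection of type $\alpha$ is $-\beta^j_\alpha\,\partial_tu\,\Delta M^\alpha$. It is proportional to $\partial_tu$, not to $u\circ\sigma_\alpha-u$. These jumps survive for $G$-invariant $f$, where $X$ is continuous. So your invariant-case subordination fails, and with it the constant $p^*-1$ you claim there.
\end{enumerate}

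In fact the martingale inequality you aim for is false. Take $G$-invariant $f$ and $p=2$. The occupation identity and Dunkl--Plancherel give $\mathbb E|Y_\infty|^2\to\frac{1+n+2\gamma}{2}\|f\|_2^2$ as the starting height $T\to\infty$, while $\mathbb E|X_\infty|^2=\|f\|_2^2$. This exceeds $4\Theta_\kappa^2\|f\|_2^2$ when, for instance, $\kappa$ is small and $n\geq8$. It exceeds $\|f\|_2^2$ as soon as $n+2\gamma>1$. The conditional projection discards this excess, so no Bellman argument for this pair can yield the theorem.

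Your jump step, ``by concavity of $U$ along the relevant segments'', is also unavailable once $Y$ jumps. Burkholder's function is only concave along directions $(h,k)$ with $|k|\leq|h|$, and $U(0,\cdot)$ is a positive multiple of $|\cdot|^p$. Hence a jump with $\Delta X=0\neq\Delta Y$ contributes positively. An aggregated (predictable) comparison through $I+M_\kappa$ does not repair this.

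The missing idea is the lower-triangular representation $L_j$. Take $\mathbf V=\Theta_\kappa^{-1}\int\nabla_\kappa u\,dB$, driven only by the vertical Brownian motion. It has three useful properties.
\begin{enumerate}
\item It projects to $\frac{1}{2\Theta_\kappa}\mathbf Rf$. This, not an absorption of mixed terms, is where the factor $2$ comes from.
\item It is continuous, so only $X$ jumps, and only in the first variable.
\item Its coefficient $b=\Theta_\kappa^{-1}\mathsf S_\kappa(a,q)$, with $a=\nabla u$ and $q=(J^\alpha u)_\alpha$, satisfies $|b|^2\leq|a|^2+|q|^2$.
\end{enumerate}

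The Bellman step then needs an exact matching of coefficients, which your sketch does not supply. Let $\Lambda_p:=(U_{xx}U_{rr}-U_{xr}^2)/U_{xx}$ be the Schur complement of the radial Hessian, with $r=|y|$. Let $\mathsf E=U_r/r$ be the tangential coefficient of the $y$-Hessian.
\begin{enumerate}
\item Using $\mathsf E\leq\Lambda_p\leq-U_{xx}$ and completing the square in $\partial_tu$, the continuous drift is bounded by $\frac12\Lambda_p|q|^2$.
\item The one-variable inequality $U(x+h,y)-U(x,y)-U_x(x,y)h\leq-\frac12\Lambda_p(x,y)h^2$, applied to $h_\alpha=u\circ\sigma_\alpha-u$ together with $r_\alpha h_\alpha^2=|J^\alpha u|^2$, bounds the compensated jump drift by $-\frac12\Lambda_p|q|^2$.
\end{enumerate}
These two facts are exactly where $p\geq2$ is used, and they cancel. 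For $G$-invariant $f$, the unnormalized transform $\int\nabla u\,dB$ has bracket $|\nabla u|^2\,dt\leq d[X]$. Burkholder therefore gives $p^*-1$ for $\frac12\mathbf R$, that is, $2(p^*-1)$ for $\mathbf R$.
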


Therefore, our result improves both the absolute constant and the dependence on the root geometry when $p\geq2$. In particular, we have
\[
\theta_j,\Theta_\kappa\leq\sqrt{1+2\gamma},
\qquad
\gamma:=\sum_{\alpha\in\RR_+}\kappa(\alpha).
\]
If the root system is irreducible and spans $\R^n$, then $M_\kappa=\frac{2\gamma}{n}I$ since it commutes with the reflection group, hence
\[
\Theta_\kappa=\sqrt{1+\frac{2}{n}\gamma}.
\]

In dimension one, the invariant estimate also gives a multiplicity-independent bound for arbitrary inputs. Indeed, the Dunkl Hilbert transform interchanges even and odd functions, so the estimate on even inputs extends to odd inputs by duality. Decomposing an arbitrary function into its even and odd parts gives the following consequence, proved in \cref{cor:rank-one-bound}.

\begin{acorollary}[The one-dimensional case]\label[acorollary]{cor:rank-one-intro}
Let $n=1$ and write $H_\kappa:=R_1$ for the Dunkl Hilbert transform. For every $1<p<\infty$ and every $f\in L^p(dw)$,
\[
\|H_\kappa f\|_{L^p(dw)}
\leq4(p^*-1)\|f\|_{L^p(dw)},
\]
uniformly in the multiplicity $\kappa\geq0$.
\end{acorollary}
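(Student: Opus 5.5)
Split $f=f_e+f_o$ into even and odd parts, $f_e(x)=\tfrac12(f(x)+f(-x))$ and $f_o(x)=\tfrac12(f(x)-f(-x))$. In dimension one the reflection group is $G=\{\pm1\}$, so $G$-invariant means even. Since $dw=|x|^{2\kappa}dx$ is even, $\|f_e\|_{L^p(dw)},\|f_o\|_{L^p(dw)}\le\|f\|_{L^p(dw)}$ by the triangle inequality. The even part is handled directly: the invariant case of \cref{single riesz} gives
\[
\|H_\kappa f_e\|_{L^p(dw)}\le 2(p^*-1)\|f_e\|_{L^p(dw)}\le 2(p^*-1)\|f\|_{L^p(dw)}
\]
for every $1<p<\infty$, with no dependence on $\kappa$.

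For the odd part I will use duality together with the parity structure of $H_\kappa$. The Dunkl kernel satisfies $E_\kappa(-x,\xi)=E_\kappa(x,-\xi)$, so $\mathcal F_\kappa$ preserves parity. The symbol $-i\,\mathrm{sgn}(\xi)$ is odd, so $H_\kappa$ maps even functions to odd functions and odd functions to even ones. Moreover, $H_\kappa$ is skew-adjoint on $L^2(dw)$: its symbol is purely imaginary, and we pair via Plancherel for $\mathcal F_\kappa$ using the bilinear pairing $\int fg\,dw$. Concretely, I will check that
\[
\int H_\kappa f\cdot g\,dw=-\int f\cdot H_\kappa g\,dw
\]
for $f,g$ in a dense class such as Schwartz functions, using $\overline{E_\kappa(x,\xi)}=E_\kappa(-x,\xi)$.

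To estimate $\|H_\kappa f_o\|_{L^p}$, note that $H_\kappa f_o$ is even. Hence it suffices to test it against even $g\in L^{p'}(dw)$ with $\|g\|_{p'}\le1$: for an even function $h$,
\[
\sup_{\|g\|_{p'}\le 1}\Bigl|\int hg\,dw\Bigr|
\]
is attained on even $g$, since the odd part of $g$ integrates to zero against $h$ and replacing $g$ by $g_e$ does not increase its norm. For such $g$,
\[
\Bigl|\int H_\kappa f_o\, g\,dw\Bigr|=\Bigl|\int f_o\,H_\kappa g\,dw\Bigr|\le\|f\|_p\,\|H_\kappa g\|_{p'}\le 2((p')^*-1)\|f\|_p,
\]
where the last step applies the invariant estimate in $L^{p'}$ to the even function $g$. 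Since $(p')^*=p^*$, this gives $\|H_\kappa f_o\|_p\le 2(p^*-1)\|f\|_p$. Adding the two parts yields the constant $4(p^*-1)$.

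The main technical point I expect is rigor in the duality step. One needs $H_\kappa$ bounded on $L^{p'}$ for even inputs (from \cref{single riesz}) and skew-adjointness on a dense class. The operator must then be extended by density, using the a priori bound $\|H_\kappa\|_{L^p\to L^p}\le 2(p^*-1)\theta_1$ from the general case of \cref{single riesz}, so that $H_\kappa f_o$ is defined and the identity passes to the limit. I also need even Schwartz functions to be dense in the even part of $L^{p'}(dw)$, which follows by symmetrizing any approximating sequence.
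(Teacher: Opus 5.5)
Your proposal is correct and follows essentially the same route as the paper: split $f$ into its even and odd parts, apply the $G$-invariant bound $2(p^*-1)$ to the even part, and handle the odd part by duality against even test functions, using $H_\kappa^*=-H_\kappa$ and the fact that $H_\kappa$ swaps parity; adding the two bounds gives $4(p^*-1)$. You also make explicit a point the paper leaves implicit, namely defining $H_\kappa$ on $L^p(dw)$ through the $\kappa$-dependent a priori bound and extending the skew-adjointness identity by density.
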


Plancherel's theorem gives the exact normalization
$\|\mathbf Rf\|_2=\|f\|_2$. Interpolation with this identity yields a small improvement of the estimates near $p=2$; see \cref{cor:interpolated-riesz-bounds}.

The main difficulty in these results is that the classical martingale results of \cite{W95,BW96} are no longer directly applicable. In the Brownian setting, the matrix representing a Riesz transform is a contraction on the infinitesimal martingale increments, yielding differential subordination. In the Dunkl setting, the matrix representing $R_j$ is still skew-symmetric, yet it acts on the discontinuous reflection coordinates and produces jumps in the transformed martingale that are not pointwise comparable to the jumps of the source martingale, breaking subordination and preventing the application of the classical inequalities. However, its lower-triangular part $L_j$, which represents $\frac12R_j$, generates a continuous martingale. This representation loses the orthogonality associated with the full skew-symmetric matrix, but it is much better adapted to Burkholder's Bellman function even if differential subordination still fails.
Indeed, full differential subordination turns out to be stronger than what is needed to close the Bellman estimate. Let $p\geq2$ and denote by
\[
\mathcal D^{\mathcal B}:=\mathcal C^{\mathcal B}+\mathcal J^{\mathcal B}
\]
the predictable drift obtained from It\^o's formula for Burkholder's function $\mathcal B$, where $\mathcal J^{\mathcal B}$ denotes the compensated jump contribution. Differential subordination makes both terms nonpositive. In our case, $\mathcal C^{\mathcal B}$ need not be nonpositive, but its positive defect is controlled by the reflection energy. A finite-difference inequality for Burkholder's function, proved in \cref{subsec:burkholder-scalar-lemmas}, gives a negative jump contribution with the same coefficient. After compensation, this yields $\mathcal D^{\mathcal B}\leq0$. \\

The situation simplifies substantially for $G$-invariant functions. In this case, the Poisson martingale itself is continuous. The full skew-symmetric representation recovers orthogonality but not differential subordination, since its martingale may still contain reflection jumps, so this argument does not recover the classical Euclidean bound. Passing again to the lower-triangular representation guarantees differential subordination, so the usual Burkholder inequality applies without any root-system loss. For $p\geq2$, the same mechanism applies simultaneously to all the lower-triangular representations $L_j$, leading to the vector estimate in \cref{vector}. The general vector estimate below $2$ is not obtained by this argument. \\

\subsection{Structure of the paper}
The rest of the paper is organized as follows. We first recall the basic analytic ingredients of the Dunkl setting. In \cref{prob set up} we introduce the Dunkl process and its decomposition into continuous and reflection martingales, while in \cref{sect 4,sec:invariant-sharp-bounds} we introduce Dunkl martingale transforms and the matrix lifting procedure for symbols, and we study the special geometry of $G$-invariant functions. The main Bellman argument for general functions, including the cancellation between the continuous defect and the compensated reflection drift, is carried out in the final section.

\section{The Dunkl setting}

We begin with some standard definitions and operators in the Dunkl setting.
All the facts in this section can be found in
\cite{dunkl,dunklbook,Dunkl87,DziubanskiHejna2019,DziubanskiHejna2022,Rosler2002}. Let $\R^n$ be the Euclidean space equipped with the standard inner product
$\langle\cdot,\cdot\rangle$. For a non-zero vector $\alpha\in\R^n$, let
$\sigma_\alpha$ denote the reflection with respect to the hyperplane
$H_\alpha$ orthogonal to $\alpha$. A finite set
$\RR\subset\R^n\setminus\{0\}$ is called a reduced root system if
\[
\RR\cap\mathbb R\alpha=\{\pm\alpha\}
\qquad\text{and}\qquad
\sigma_\alpha(\RR)=\RR
\]
for every $\alpha\in\RR$. The finite group $G$ generated by the reflections
$\{\sigma_\alpha:\alpha\in\RR\}$ is called the reflection group associated
with $\RR$. A function $f$ is called $G$-invariant if
$f(gx)=f(x)$ for every $g\in G$ and $x\in\R^n$. Let us fix a positive subsystem $\RR_+\subset\RR$, so that
$\RR=\RR_+\cup(-\RR_+)$, and assume that the roots are normalized by
$|\alpha|^2=2$. Thus,
\[
\sigma_\alpha x
=
x-\langle\alpha,x\rangle\alpha,
\qquad x\in\R^n.
\]
Let $\kappa:\RR\to\R_{\geq0}$ be a multiplicity function, that is, a
$G$-invariant function on $\RR$. Recall
\[
\gamma:=\sum_{\alpha\in\RR_+}\kappa(\alpha)
\]
and define
\[
w_\kappa(x)
:=
\prod_{\alpha\in\RR_+}
|\langle\alpha,x\rangle|^{2\kappa(\alpha)},
\qquad
dw(x):=w_\kappa(x)\,dx.
\]

The function $w_\kappa$ is $G$-invariant and homogeneous of degree
$2\gamma$ and the measure $dw(x)=w_\kappa(x)dx$ is doubling.
Introduced by C.~Dunkl in \cite{Dunkl87}, the Dunkl operators associated
with $\RR$ and $\kappa$ are the differential-difference operators
\begin{equation}
T_\xi f(x)
=
\partial_\xi f(x)
+
\sum_{\alpha\in\RR_+}
\kappa(\alpha)\langle\alpha,\xi\rangle
\frac{f(x)-f(\sigma_\alpha x)}
{\langle\alpha,x\rangle},
\label{E:dunk-op}
\end{equation}
defined initially for $f\in C^1(\R^n)$ and $\xi\in\R^n$. On the
reflection hyperplanes, the quotient in \eqref{E:dunk-op} is understood
by continuous extension. We write $T_j:=T_{e_j}$ for the operators related to the standard basis
vectors $e_1,\ldots,e_n$. \\
In the following, we assume for simplicity that every $\alpha\in\RR_+$ satisfies $\kappa(\alpha)>0$ and write $m:=|\RR_+|$. All the results in the paper can be generalized to arbitrary root systems and multiplicity functions by restricting to the subset $\RR_+^\kappa\subset\RR_+$ of positive roots having positive weight. For $\alpha\in\RR_+$, define
\begin{equation}
J^\alpha f(x)
:=
\sqrt{\kappa(\alpha)}
\frac{f(x)-f(\sigma_\alpha x)}
{\langle\alpha,x\rangle},
\qquad
\beta_\alpha:=\sqrt{\kappa(\alpha)}\,\alpha.
\end{equation}
For a function $F$ defined on $\R^n\times[0,\infty)$ and sufficiently regular, we will again write
\[
J^\alpha F(x,s)
:=
\sqrt{\kappa(\alpha)}
\frac{F(x,s)-F(\sigma_\alpha x,s)}
{\langle\alpha,x\rangle}.
\]
The Dunkl gradient is then defined as
\[
\nabla_\kappa f
:=
(T_1f,\ldots,T_nf)^\top
=
\nabla f
+
\sum_{\alpha\in\RR_+}
J^\alpha f\,\beta_\alpha,
\]
where $\nabla$ is the standard Euclidean gradient.
The Dunkl operators commute pairwise and are skew-symmetric with respect
to the measure $dw$.
The Dunkl Laplacian is defined as
\[
\Delta_\kappa:=\sum_{j=1}^nT_j^2.
\]
For $f\in C_c^2(\R^n)$, one has the following expression
\begin{equation}
\Delta_\kappa f(x)
=
\Delta f(x)
+
2\sum_{\alpha\in\RR_+}
\kappa(\alpha)
\left(
\frac{\partial_\alpha f(x)}
{\langle\alpha,x\rangle}
-
\frac{f(x)-f(\sigma_\alpha x)}
{\langle\alpha,x\rangle^2}
\right),
\label{eq:dunkl-laplacian}
\end{equation}
where $\Delta$ is the Euclidean Laplacian and
$\partial_\alpha$ is the $\alpha$-directional derivative.
Another fundamental operator is the carr\'e du champ operator. For real-valued $f,g\in C^2(\R^n)$, this is defined as
\[
\Gamma_\kappa(f,g)
:=
\frac12
\left(
\Delta_\kappa(fg)
-
f\Delta_\kappa g
-
g\Delta_\kappa f
\right).
\]
A direct computation gives
\begin{equation}
\Gamma_\kappa(f,g)
=
\langle\nabla f,\nabla g\rangle
+
\sum_{\alpha\in\RR_+}
J^\alpha f\,J^\alpha g.
\end{equation}
In particular, if $\RR_+=\{\alpha_1,\ldots,\alpha_m\}$,
\[
\Gamma_\kappa(f)
:=
\Gamma_\kappa(f,f)
=
|\nabla f|^2
+
\sum_{\alpha\in\RR_+}|J^\alpha f|^2
=:
|\nabla_\Gamma f|^2,
\]
where the gradient associated with the
carr\'e du champ is
\begin{equation*}
\nabla_\Gamma f
:=
\bigl(
\nabla f,
J^{\alpha_1}f,\ldots,J^{\alpha_m}f
\bigr)^\top.
\end{equation*}

\subsection{The Dunkl Fourier transform and Dunkl convolution}

The Dunkl kernel $E_\kappa(x,y)$, for $x,y\in\R^n$, plays the role of
the exponential function $e^{\langle x,y\rangle}$ in the Dunkl setting.
It is the unique analytic solution of
\[
T_\xi^xE_\kappa(x,y)
=
\langle\xi,y\rangle E_\kappa(x,y);
\qquad
E_\kappa(0,y)=1,
\]
where $T_\xi^xE_\kappa(x,y):=T_\xi(E_\kappa(\cdot,y))(x)$.
The kernel extends holomorphically to
$\mathbb C^n\times\mathbb C^n$ and is symmetric in its arguments. The Dunkl transform of $f\in L^1(dw)$ is then defined as
\begin{equation}
\mathcal F_\kappa f(\xi)
:=
c_\kappa^{-1}
\int_{\R^n}
f(x)E_\kappa(x,-i\xi)\,dw(x),
\qquad
\xi\in\R^n,
\label{eq:dunkl-transform}
\end{equation}
where
\[
c_\kappa
:=
\int_{\R^n}
e^{-|x|^2/2}w_\kappa(x)\,dx.
\]
The Dunkl transform extends to an isometric isomorphism on $L^2(dw)$,
and satisfies the Euclidean type identities
\[
\mathcal F_\kappa(T_jf)(\xi)
=
i\xi_j\mathcal F_\kappa f(\xi),
\qquad
\mathcal F_\kappa(\Delta_\kappa f)(\xi)
=
-|\xi|^2\mathcal F_\kappa f(\xi).
\]

For $x\in\R^n$, the Dunkl translation $\tau_x$ is defined on $L^2(dw)$
by
\[
\mathcal F_\kappa(\tau_xf)(\xi)
=
E_\kappa(i\xi,x)\mathcal F_\kappa f(\xi).
\]
For $f\in\mathcal S(\R^n)$, this gives
\[
\tau_xf(y)
=
c_\kappa^{-1}
\int_{\R^n}
E_\kappa(i\xi,x)
E_\kappa(i\xi,y)
\mathcal F_\kappa f(\xi)
w_\kappa(\xi)\,d\xi.
\]
Unlike the classical translation, $\tau_x$ is not positive in general
when $\kappa\neq0$. The Dunkl convolution of suitable functions $f,g$ is defined by
\[
(f*_\kappa g)(x)
:=
\int_{\R^n}
f(y)\tau_xg(-y)\,dw(y).
\]
With the normalization in \eqref{eq:dunkl-transform}, we have
\[
\mathcal F_\kappa(f*_\kappa g)
=
c_\kappa\,
\mathcal F_\kappa f\,
\mathcal F_\kappa g.
\]

\subsection{The Dunkl heat and Poisson semigroups}

The operator $\frac12\Delta_\kappa$ admits a self-adjoint realization on $L^2(dw)$ and generates a strongly continuous symmetric Feller semigroup
\[
H_tf
:=
e^{t\Delta_\kappa/2}f.
\]
On the Dunkl-transform side,
\[
\mathcal F_\kappa(H_tf)(\xi)
=
e^{-t|\xi|^2/2}\mathcal F_\kappa f(\xi).
\]
We can realize $H_tf$ also as the Dunkl convolution of $f$ with the kernel
\[
h_t(x)=c_\kappa^{-1}t^{-\mathbf N/2}e^{-|x|^2/(2t)},
\]
where $\mathbf N=n+2\gamma$ is the homogeneous dimension of the system. In other words
\[
H_tf(x)
=
\int_{\R^n}
h_t(x,y)f(y)\,dw(y),
\qquad
h_t(x,y):=\tau_xh_t(-y).
\]
For every $t>0$, the function $h_t(x,y)$ is smooth, positive,
symmetric in $x$ and $y$, and
\begin{equation}\label{E: dunkl heat conservative}
\int_{\R^n}h_t(x,y)\,dw(y)=1.
\end{equation}

The Poisson semigroup is defined by
\[
P_y
:=
e^{-y\sqrt{-\Delta_\kappa}},
\qquad y>0.
\]
For a suitable function $f$, its Poisson extension will be denoted by
\begin{equation}
U_f(x,y)
:=
P_yf(x)
=
\frac1{\sqrt\pi}
\int_0^\infty
e^{-t}
H_{y^2/(2t)}f(x)
\frac{dt}{\sqrt t}.
\label{eq:dunkl-poisson-subordination}
\end{equation}
If $f$ is bounded and continuous, then $U_f$ is the bounded solution of
\[
\begin{cases}
\partial_y^2U_f(x,y)+\Delta_\kappa U_f(x,y)=0,
& (x,y)\in\R^n\times(0,\infty),\\
U_f(x,0)=f(x),
& x\in\R^n.
\end{cases}
\]
The Dunkl-transform identity is
\begin{equation*}
\mathcal F_\kappa(U_f)(\xi,y)
=
e^{-y|\xi|}\mathcal F_\kappa f(\xi).
\end{equation*}

\subsection{Square functions}
We finally state some results about some square functions that will be used in the following arguments. Let $s$ be a positive integer such that $2s>\mathbf N+1$. Let $\varphi$ be a $C^{2s}(\R^n)$ function such that, for a certain $M>\lfloor\mathbf N\rfloor+1$, we have
\[
|\partial^\beta\varphi(x)|\leq C(1+|x|)^{-(M+\mathbf N)},
\qquad \beta\in\N_0^n,\quad |\beta|\leq2s,
\]
For $y>0$, let $\varphi_y(x)=y^{-\mathbf N}\varphi(x/y)$.
The carr\'e du champ square function is
\[
S_{\Gamma,\varphi}(f)(x)
=\bigg(\int_0^\infty y\,|\nabla_\Gamma(\varphi_y\ast_\kappa f)(x)|^2\,dy\bigg)^{\frac12}.
\]
Similarly, assume that
$\phi\in C^{2s+1}(\R^n)$ and
\[
|\partial^\beta\phi(x)|
\leq
C(1+|x|)^{-M-\mathbf N-1},
\qquad
|\beta|\leq2s+1,
\]
for some $M>\lfloor\mathbf N\rfloor+1$. The vertical square function is defined as
\[
S_{y,\phi}(f)(x)
=\bigg(\int_0^\infty y\,|\partial_y(\phi_y\ast_\kappa f)(x)|^2\,dy\bigg)^{\frac12}.
\]

The following result was proved in \cite{DzHe}.
\begin{theorem}\label{T: Dzhe}
For every $1<p<\infty$ and all $\phi,\varphi$ as above, there exists a positive constant $C_{p,\phi,\varphi}$ such that, for every $f\in L^p(dw)$,
\[
\|S_{y,\phi}f\|_p+\|S_{\Gamma,\varphi}f\|_p
\leq C_{p,\phi,\varphi}\|f\|_p.
\]
\end{theorem}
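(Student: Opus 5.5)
The plan is to treat both square functions as singular integrals with values in the Hilbert space $\mathcal H:=L^2((0,\infty),y\,dy)$ (or $\mathcal H^{n+m}$ for the carr\'e du champ version). We then run vector-valued Calder\'on--Zygmund theory on the space of homogeneous type $(\R^n,|\cdot|,dw)$, keeping in mind that the kernel estimates in the Dunkl setting are naturally expressed through the orbit distance $d(x,y):=\min_{g\in G}|x-gy|$. Concretely, write $S_{y,\phi}f(x)=\|K_\phi f(x,\cdot)\|_{\mathcal H}$ with
\[
K_\phi f(x,y)=\int_{\R^n}\partial_y\bigl(\tau_x\phi_y(-z)\bigr)f(z)\,dw(z),
\]
and similarly $K_\varphi f(x,y)=\nabla_\Gamma^x(\varphi_y*_\kappa f)(x)$. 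The proof then has three steps.

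\textbf{Step 1: $L^2$ bounds.} For the vertical function, Plancherel for $\mathcal F_\kappa$ together with $\mathcal F_\kappa(\phi_y*_\kappa f)(\xi)=c_\kappa\mathcal F_\kappa\phi(y\xi)\mathcal F_\kappa f(\xi)$ reduces the estimate to the bound
\[
\sup_{\xi}\int_0^\infty y\,|\partial_y[\mathcal F_\kappa\phi(y\xi)]|^2\,dy<\infty .
\]
This follows from the decay and smoothness of $\phi$. The relevant $\mathcal F_\kappa\phi$ also needs to vanish at $0$, or the square function has to be read as in \cite{DzHe}, where $\partial_y$ kills constants. For the carr\'e du champ function, we use $\int\Gamma_\kappa(F)\,dw=-\int F\Delta_\kappa F\,dw$, valid since $\Delta_\kappa$ is symmetric and $\Gamma_\kappa$ integrates to the Dirichlet form. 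This gives
\[
\int|\nabla_\Gamma F|^2\,dw=\int|\xi|^2|\mathcal F_\kappa F|^2\,d\xi\quad\text{(up to the weight)},
\]
so again we get a multiplier bound $\sup_\xi\int_0^\infty y|y^{-1}\cdot y\xi|^2|\mathcal F_\kappa\varphi(y\xi)|^2\,dy<\infty$, using that $\nabla_\Gamma(\varphi_y*_\kappa f)$ carries a factor $y^{-1}$.

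\textbf{Step 2: kernel estimates.} This is the main obstacle. Using the Dziubański--Hejna estimates for Dunkl translations of functions with the stated decay, $M>\lfloor\mathbf N\rfloor+1$ and $2s$ derivatives, one has
\[
|\tau_x\varphi_y(-z)|\lesssim \frac{1}{w(B(x,y))}\Bigl(1+\frac{|x-z|}{y}\Bigr)^{-1}\Bigl(1+\frac{d(x,z)}{y}\Bigr)^{-M'},
\]
and the same bound holds for $y\partial_y$ and for $y\nabla_x$, with a H\"older-type estimate in $x$. The difference terms $J^\alpha$ are handled through
\[
J^\alpha F(x)=\sqrt{\kappa(\alpha)}\int_0^1\partial_\alpha F\bigl(x-t\langle\alpha,x\rangle\alpha\bigr)\,dt,
\]
which removes the singularity at $H_\alpha$. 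The points $x-t\langle\alpha,x\rangle\alpha$ lie on the segment from $x$ to $\sigma_\alpha x$, and all of them have orbit distance to $z$ controlled by $d(x,z)+|\langle\alpha,x\rangle|$. When $|\langle\alpha,x\rangle|\gtrsim y$ one instead estimates the two terms separately, gaining $y/|\langle\alpha,x\rangle|$. Integrating the squares in $y\,dy$ then yields the $\mathcal H$-valued size condition
\[
\|K(x,z)\|_{\mathcal H}\lesssim w(B(x,d(x,z)))^{-1}
\]
and a smoothness condition in $z$, for $|z-z'|<d(x,z)/2$. The delicate point is that the natural decay is only in $d(x,z)$ rather than $|x-z|$; the H\"ormander integral must therefore be summed over the $|G|$ reflected copies, which is harmless since $G$ is finite and $dw$ is $G$-invariant.

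\textbf{Step 3: extrapolation.} With the $L^2$ bound and the kernel bounds in the orbit-distance form, the Calder\'on--Zygmund decomposition on $(\R^n,dw)$ gives weak $(1,1)$. The only change is that the bad parts are supported on balls $B$, and we must excise $\bigcup_{g}gB^*$ instead of $B^*$, which costs a factor $|G|$ by doubling. Marcinkiewicz interpolation then gives $1<p\le2$. For $p>2$ we argue by duality: the adjoint $\mathcal H$-valued operator $h\mapsto\int_0^\infty\!\int K(x,z,y)h(x,y)\,dw(x)\,y\,dy$ has a kernel satisfying the symmetric H\"ormander condition in $x$. That condition uses the H\"older estimate in $x$, and for the $J^\alpha$ component it comes from the same segment representation. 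The adjoint is therefore bounded $L^{p'}(\mathcal H)\to L^{p'}$, which gives the $L^p$ bounds for all $1<p<\infty$ with a constant depending on $p,\phi,\varphi$.
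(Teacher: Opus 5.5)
The paper does not prove this theorem. It quotes it from \cite{DzHe} and uses it only once, in \cref{L:augmented-square-function-bound}, for the Gaussian $\varphi=\phi$ and only qualitatively. Your proposal rebuilds the result through $\mathcal H$-valued Calder\'on--Zygmund theory with orbit-distance kernel bounds, which is the natural mechanism for such results. Because the decisive kernel bounds are still quoted, your route makes the mechanism explicit but does not remove the dependence on \cite{DzHe}.

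Steps 1 and 3 are sound, with two small remarks.
\begin{itemize}
\item In Step 1 you need no condition on $\mathcal F_\kappa\phi(0)$. Indeed $y\partial_y\phi_y=\psi_y$ with $\psi=-\mathbf N\phi-\langle x,\nabla\phi\rangle$, and $\int\psi\,dw=0$ by the homogeneity of $w_\kappa$.
\item In Step 3, excising $\bigcup_{g\in G}gB^*$ and summing over the $|G|$ reflected copies works even with $d(x,z)$, rather than $|x-z|$, in the denominator of the smoothness bound.
\end{itemize}

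The gap is in Step 2, where all the Dunkl-specific difficulty sits. The size bound you quote concerns $\tau_x\varphi_y(-z)$ itself. Dunkl derivatives commute with translation, since $T^x_k\tau_x\varphi_y(-z)=\tau_x(T_k\varphi_y)(-z)$. The components of $\nabla_\Gamma^x$, namely $\partial_{x_k}$ and $J^\alpha_x$, do not commute with translation.

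Size and $x$-regularity of $y\nabla_x\tau_x\varphi_y(-z)$ can be obtained via the symmetry $\tau_x\varphi(-z)=\tau_z\check\varphi(-x)$, where $\check\varphi(u)=\varphi(-u)$. This turns them into second-variable Lipschitz bounds.

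Your sentence that integrating the squares yields a smoothness condition in $z$ is a non sequitur for $S_{\Gamma,\varphi}$, because no $z$-regularity input was ever stated. What is needed is control of $\partial_{x_k}$ and $J^\alpha_x$ applied to $\tau_x\varphi_y(-z)-\tau_x\varphi_y(-z')$. This is a mixed estimate that one-variable bounds on non-radial translates do not supply. It is exactly the H\"ormander condition that drives the weak $(1,1)$ bound and the range $p<2$.

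The gap can be closed as follows.
\begin{itemize}
\item Write $\partial_{x_k}=T^x_k-\sum_\alpha\beta_\alpha^kJ^\alpha_x$. The $T^x_k$ part reduces to a $z$-Lipschitz bound for the translate of $T_k\varphi$.
\item For $J^\alpha_x$, combine the segment bound, which gives no gain in $z$, with the $z$-Lipschitz bound of the kernel divided by $|\langle\alpha,x\rangle|$. This yields a factor $\min\{1,|z-z'|/|\langle\alpha,x\rangle|\}$.
\item The region without gain is a slab of width $\sim|z-z'|$ around $H_\alpha$. Its $w$-relative measure in a ball of radius $R$ is $\lesssim|z-z'|/R$, so its contribution to the H\"ormander integral is summable over dyadic shells.
\item In the Gaussian case, which is all the paper needs, the argument is simpler. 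The identity $h_t(x,z)=\int h_{t/2}(x,u)h_{t/2}(u,z)\,dw(u)$ separates the two variables, so the mixed bounds follow from one-variable heat-kernel estimates.
\end{itemize}

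You should also check that each of these regularity estimates fits within the stated smoothness of $\varphi$.
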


Now, for $z=(x,y)\in\R^n\times(0,\infty)$, define the augmented carr\'e du champ gradient as
\begin{equation*}
\widetilde\nabla F(z)
:=
\bigl(\nabla_\Gamma F(z),\partial_yF(z)\bigr)^\top.
\end{equation*}
The augmented square function for the Poisson extension is
\[
\widetilde G_\Gamma f(x)
=\bigg(\int_0^\infty y|\widetilde\nabla U_f(x,y)|^2\,dy\bigg)^{\frac12}.
\]
Notice that $\widetilde G_\Gamma f(x)\leq G_\Gamma f(x)+G_yf(x)$, where
\begin{align*}
G_\Gamma f(x)&=\bigg(\int_0^\infty y|\nabla_\Gamma U_f(x,y)|^2\,dy\bigg)^{\frac12},\\
G_yf(x)&=\bigg(\int_0^\infty y|\partial_yU_f(x,y)|^2\,dy\bigg)^{\frac12}.
\end{align*}

A direct consequence of the previous theorem is the following.
\begin{corollary}\label[corollary]{L:augmented-square-function-bound}
The square function $\widetilde G_\Gamma$ is $L^p(dw)$ bounded for any $1<p<\infty$.
\end{corollary}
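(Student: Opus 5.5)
The plan is to reduce $\widetilde G_\Gamma$ to the two square functions of \cref{T: Dzhe}, using the pointwise bound $\widetilde G_\Gamma f\leq G_\Gamma f+G_yf$ recorded just before the statement. By the triangle inequality in $L^p(dw)$, it suffices to show separately that $\|G_\Gamma f\|_p\lesssim\|f\|_p$ and $\|G_yf\|_p\lesssim\|f\|_p$. The idea is to write the Poisson extension as a Dunkl convolution with a dilated Poisson kernel, $U_f(x,y)=(p_y*_\kappa f)(x)$ with $p_y(x)=y^{-\mathbf N}p(x/y)$. Then $G_\Gamma f=S_{\Gamma,p}f$ and $G_yf=S_{y,p}f$, and \cref{T: Dzhe} applies once $p$ meets its hypotheses.

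First, I will identify the Dunkl Poisson kernel. From the subordination formula \eqref{eq:dunkl-poisson-subordination} and $h_t(x)=c_\kappa^{-1}t^{-\mathbf N/2}e^{-|x|^2/(2t)}$, integrating in $t$ gives
\[
p_y(x)=c\,\frac{y}{(y^2+|x|^2)^{(\mathbf N+1)/2}}.
\]
Here $c$ depends only on $\mathbf N$ and $c_\kappa$. This is radial, and it is exactly of the form $y^{-\mathbf N}p(x/y)$ with $p(x)=c(1+|x|^2)^{-(\mathbf N+1)/2}$. Because convolution with a radial function commutes with the Dunkl transform structure, $\mathcal F_\kappa(p_y*_\kappa f)=e^{-y|\xi|}\mathcal F_\kappa f$. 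This confirms $U_f=p_y*_\kappa f$ on a dense class such as Schwartz functions.

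The main obstacle is the decay hypothesis. \cref{T: Dzhe} requires $|\partial^\beta\varphi(x)|\lesssim(1+|x|)^{-(M+\mathbf N)}$ with $M>\lfloor\mathbf N\rfloor+1$. The Poisson kernel decays only like $(1+|x|)^{-\mathbf N-1}$, so $p$ itself is not admissible. I would get around this by a subordination/Calderón-type decomposition. The first option is to write $P_y=\int_0^\infty e^{-t}H_{y^2/(2t)}\,\frac{dt}{\sqrt{\pi t}}$ and apply Minkowski's inequality in the square function. Each heat piece is $\varphi_{y/\sqrt{2t}}$ with a Gaussian $\varphi$, which satisfies all hypotheses. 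A change of variables $y\mapsto y\sqrt{2t}$ in $\int_0^\infty y|\nabla_\Gamma(\cdot)|^2dy$ produces a factor $(2t)^{1/2}$ for $G_\Gamma$, and similarly for the $\partial_y$ version. Since $\int_0^\infty e^{-t}t^{-1/2}\cdot t^{1/2}\,dt<\infty$, Minkowski (the integral of square functions dominates the square function of the integral) yields $G_\Gamma f\lesssim S_{\Gamma,\varphi}f$ pointwise with $\varphi$ the Gaussian.

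For $G_y$, the derivative $\partial_y$ falls on $H_{y^2/(2t)}$ and gives $\frac{y}{t}\partial_sH_s|_{s=y^2/2t}$. This is again a dilate of $\phi=$ (Gaussian times a polynomial), a Schwartz function, so the same change of variables and Minkowski argument reduces $G_y$ to $S_{y,\phi}$. Here $\phi$ is a Schwartz function, so all decay conditions in \cref{T: Dzhe} hold for any $M$. I will check that the $t$-weights remain integrable. Combining the two bounds and extending from a dense class by density and Fatou's lemma then gives the $L^p(dw)$ boundedness of $\widetilde G_\Gamma$ for $1<p<\infty$.
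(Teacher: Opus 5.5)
Your proposal is correct and follows essentially the same route as the paper: set aside the Poisson kernel (which, as you note, fails the decay hypothesis), write $U_f$ through the subordination formula, apply Minkowski's inequality in $L^2(y\,dy)$, and rescale $y$ so that each heat piece becomes the square function of \cref{T: Dzhe} with a Gaussian kernel. The one slip is in the $G_y$ step: $S_{y,\phi}$ already contains the $\partial_y$, so you should take $\phi$ to be the Gaussian itself rather than the Gaussian-times-polynomial profile of $\partial_s h_s$; the rescaling then gives $\|\partial_yH_{\cdot^2/(2t)}f(x)\|_{L^2(y\,dy)}=S_{y,\phi}f(x)$ exactly, leaving only the integrable weight $e^{-t}t^{-1/2}$.
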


\begin{proof}
It suffices to prove that both $G_\Gamma f$ and $G_yf$ are $L^p$ bounded.
Choose
\[
\varphi(x)=\phi(x)
=
c_\kappa^{-1}2^{\mathbf N/2}e^{-|x|^2}.
\]
The Gaussian satisfies both sets of hypotheses above, and
\[
\varphi_y(x)=h_{y^2/2}(x),
\qquad
(\varphi_y*_\kappa f)(x)=H_{y^2/2}f(x).
\]
Consequently, for this choice of $\varphi$,
\[
S_{\Gamma,\varphi}f(x)
=\bigg(\int_0^\infty y|\nabla_\Gamma H_{y^2/2}f(x)|^2\,dy\bigg)^{\frac12},
\qquad
S_{y,\varphi}f(x)
=\bigg(\int_0^\infty y|\partial_y H_{y^2/2}f(x)|^2\,dy\bigg)^{\frac12}.
\]
Now, using \eqref{eq:dunkl-poisson-subordination} and Minkowski's inequality yields
\[
G_\Gamma f(x)
=\left\|\nabla_\Gamma U_f(x,\cdot)\right\|_{L^2(y\,dy)}
\leq\frac1{\sqrt\pi}\int_0^\infty\frac{e^{-t}}{\sqrt t}
\left\|\nabla_\Gamma H_{\cdot^2/(2t)}f(x)\right\|_{L^2(y\,dy)}\,dt.
\]
For a fixed $t>0$, substituting $y=r\sqrt t$ gives
\begin{align*}
\left\|\nabla_\Gamma H_{\cdot^2/(2t)}f(x)\right\|_{L^2(y\,dy)}^2
=\int_0^\infty y\left|\nabla_\Gamma H_{y^2/(2t)}f(x)\right|^2\,dy
=t\left[S_{\Gamma,\varphi}f(x)\right]^2.
\end{align*}
Therefore
\[
G_\Gamma f(x)\leq\frac{S_{\Gamma,\varphi}f(x)}{\sqrt\pi}
\int_0^\infty e^{-t}\,dt
=\frac1{\sqrt\pi}S_{\Gamma,\varphi}f(x).
\]
Since $S_{\Gamma,\varphi}f$ is bounded on $L^p(dw)$ by \cref{T: Dzhe}, $G_\Gamma f$ is also bounded on $L^p(dw)$. For the vertical derivative, set
$\Phi(r):=H_{r^2/2}f(x)$. Then
\[
H_{y^2/(2t)}f(x)=\Phi(y/\sqrt t),
\qquad
\partial_yH_{y^2/(2t)}f(x)
=
\frac1{\sqrt t}\Phi'(y/\sqrt t).
\]
The same change of variables gives
\[
\left\|\partial_yH_{\cdot^2/(2t)}f(x)\right\|_{L^2(y\,dy)}
=S_{y,\varphi}f(x),
\]
and hence
\[
G_yf(x)
\leq\frac{S_{y,\varphi}f(x)}{\sqrt\pi}
\int_0^\infty e^{-t}t^{-1/2}\,dt
=S_{y,\varphi}f(x).
\]
Again by \cref{T: Dzhe} the result follows.
\end{proof}

\section{The probabilistic set-up}
\label{prob set up}
Throughout this section,
$(\Omega,\Sigma,(\Sigma_t)_{t\geq0},\mathbb P)$ denotes a filtered
probability space satisfying the usual conditions. In the following, we will introduce the Markov process generated by
$\frac12\Delta_\kappa$. Since the Dunkl Laplacian is not a local operator, the associated Markov process is not pathwise continuous but is a c\`adl\`ag process. Therefore, we first recall the optional and predictable
variations for c\`adl\`ag martingales and the compensators for their jump measures. We refer to \cite{protter,cohell} for the standard martingale theory mentioned below. From now on, for a c\`adl\`ag process $X$, we write
\[
\Delta X_t:=X_t-X_{t-},
\qquad t>0.
\]

\begin{definition}
Let $X$ be a real-valued c\`adl\`ag local martingale, and let $X^c$ denote its continuous local martingale part. Its optional quadratic variation is
\[
[X]_t:=[X^c]_t+\sum_{0<s\leq t}|\Delta X_s|^2,
\]
where $[X^c]$ is the usual quadratic variation of the continuous local martingale $X^c$. The process $[X]$ is adapted, increasing and c\`adl\`ag, with $[X]_0=0$, and
\[
X_t^2-X_0^2-[X]_t
\]
is a local martingale. If $X$ and $Y$ are real-valued c\`adl\`ag local
martingales, their optional quadratic covariation is defined by
\[
[X,Y]
:=
\frac14\bigl([X+Y]-[X-Y]\bigr).
\]
\end{definition}

If $X^c$ and $Y^c$ denote the continuous local martingale parts of
$X$ and $Y$, then
\begin{equation}
[X,Y]_t
=
[X^c,Y^c]_t
+
\sum_{0<s\leq t}
\Delta X_s\,\Delta Y_s.
\label{eq:optional-covariation-decomposition}
\end{equation}
If $X=(X^1,\ldots,X^d)$ is vector-valued, we define
$[X]_t:=\sum_{j=1}^d[X^j]_t$.

\begin{definition}
Let $X$ be a locally square-integrable c\`adl\`ag martingale. Its
predictable quadratic variation, or predictable bracket, is the unique
\emph{predictable}, increasing, c\`adl\`ag process $\langle X\rangle$, with
$\langle X\rangle_0=0$, such that
\[
X_t^2-X_0^2-\langle X\rangle_t
\]
is a local martingale. If $X$ and $Y$ are locally square-integrable
martingales, their predictable covariation is defined by
\[
\langle X,Y\rangle
:=
\frac14
\bigl(
\langle X+Y\rangle
-
\langle X-Y\rangle
\bigr).
\]
\end{definition}
For continuous local martingales, we have $\langle X,Y\rangle=[X,Y]$.
For discontinuous martingales, the two processes generally differ:
$[X,Y]$ contains the realized jump products, whereas
$\langle X,Y\rangle$ contains their predictable compensators.

\begin{remark}
Throughout the paper, we frequently use the stochastic differential
notation
\[
d[X,Y]_t
\qquad\text{and}\qquad
d\langle X,Y\rangle_t.
\]
These are understood as the random signed measures induced by $[X,Y]$ and $\langle X,Y\rangle$, respectively. In particular, $d\langle X,Y\rangle$ is the predictable
compensator of $d[X,Y]$, that is, for every bounded predictable
process $H$ for which the integrals are integrable,
\begin{equation}
\mathbb E\left[
\int_0^t H_s\,d[X,Y]_s
\right]
=
\mathbb E\left[
\int_0^t H_s\,d\langle X,Y\rangle_s
\right].
\end{equation}
\end{remark}

\subsection{Jump measures and their compensators}

Let $X$ be an $\R^d$-valued c\`adl\`ag adapted process. Its jump
measure $\mu^X$ is the integer-valued random measure on
$(0,\infty)\times(\R^d\setminus\{0\})$ that counts the realized jumps
of $X$:
\begin{equation*}
\mu^X(\omega;dt,dz)
:=
\sum_{s>0}
\ind_{\{\Delta X_s(\omega)\neq0\}}
\delta_{(s,\Delta X_s(\omega))}(dt,dz).
\end{equation*}
For every nonnegative measurable function $h$, integration with
respect to $\mu^X$ reduces to a pathwise sum over the jump times, that is, for every $t>0$,
\begin{equation*}
\int_{(0,t]}
\int_{\R^d\setminus\{0\}}
h_s(z)\,\mu^X(ds,dz)
=
\sum_{0<s\leq t}h_s(\Delta X_s),
\end{equation*}
with the convention $h_s(0)=0$.
Following the standard convention \cite{protter,cohell}, the same
notation is used for signed measurable $h$ whenever the corresponding
sum is absolutely convergent almost surely.
In the following we will also use the predictable compensator, or dual predictable projection, of
$\mu^X$, which is the predictable random measure $\nu^X$ characterized by
\begin{equation*}
\mathbb E\left[
\int_{(0,t]}
\int_{\R^d\setminus\{0\}}
h_s(z)\,\mu^X(ds,dz)
\right]
=
\mathbb E\left[
\int_{(0,t]}
\int_{\R^d\setminus\{0\}}
h_s(z)\,\nu^X(ds,dz)
\right],
\qquad t\geq0,
\end{equation*}
for every nonnegative predictable integrand $h$.

\subsection{Differential subordination and orthogonality}

The distinction between optional and predictable brackets is important
for the martingale inequalities used later.

\begin{definition}
Let $X$ and $Y$ be real or finite-dimensional Hilbert-valued c\`adl\`ag local martingales. We say that
$Y$ is differentially subordinate to $X$, and write $Y\ll X$, if
$|Y_0|\leq|X_0|$ and
$[X]_t-[Y]_t$
is nonnegative and nondecreasing as a function of $t$.
\end{definition}

As observed in \cite{W95}, provided that $|Y_0|\leq|X_0|$, the condition
$Y\ll X$ is equivalent to
\begin{enumerate}
\item $[X^c]-[Y^c]$ is nonnegative and nondecreasing;
\item $|\Delta Y_t|\leq|\Delta X_t|$ for every $t>0$, almost surely.
\end{enumerate}

\begin{definition}
Two real-valued c\`adl\`ag local martingales $X$ and $Y$ are called
orthogonal, and we write $X\perp Y$, if $[X,Y]\equiv0$.
\end{definition}

By \eqref{eq:optional-covariation-decomposition},
orthogonality is equivalent almost surely to
\[
[X^c,Y^c]\equiv0,
\qquad
\Delta X_t\,\Delta Y_t=0
\quad\text{for every }t>0.
\]

For $1<p<\infty$, write
\[
p^*:=\max\left\{p,\frac p{p-1}\right\},
\qquad
\|X\|_p:=\sup_{t\geq0}\|X_t\|_{L^p}.
\]

\begin{theorem}[\cite{W95,BW96}]
\label{thm:sharp-martingale-inequalities}
Let $X$ and $Y$ be real or finite-dimensional Hilbert-valued c\`adl\`ag local martingales and assume that
$Y\ll X$. Then
\[
\|Y\|_p
\leq
(p^*-1)\|X\|_p.
\]
If $X$ and $Y$ are real-valued and, in addition, $X\perp Y$, then
\[
\|Y\|_p
\leq
\cot\left(\frac{\pi}{2p^*}\right)\|X\|_p.
\]
The constants are sharp.
\end{theorem}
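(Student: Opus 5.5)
This inequality is quoted from \cite{W95,BW96} and is used as a black box. If I were to prove it, I would use Burkholder's method: a special majorant, Itô's formula for c\`adl\`ag semimartingales, and a localization argument.

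\textbf{First inequality.} Let $1<p<\infty$ and set
\[
V(x,y)=|y|^p-(p^*-1)^p|x|^p .
\]
The plan is to use Burkholder's majorant
\[
U(x,y)=p\Bigl(1-\tfrac1{p^*}\Bigr)^{p-1}\bigl(|y|-(p^*-1)|x|\bigr)\bigl(|x|+|y|\bigr)^{p-1},
\]
with $x,y$ in a Hilbert space. I would verify three properties.
\begin{enumerate}
\item $V\le U$ everywhere.
\item $U(x,y)\le0$ whenever $|y|\le|x|$.
\item The zigzag concavity
\[
U(x+h,y+k)\le U(x,y)+\langle U_x(x,y),h\rangle+\langle U_y(x,y),k\rangle
\quad\text{whenever } |k|\le|h| .
\]
\end{enumerate}
Property (3) has an infinitesimal counterpart: the quadratic form $D^2U(x,y)[(h,k)]$ is nonpositive when $|k|\le|h|$, off the set where $U$ fails to be $C^2$. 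Both (3) and its infinitesimal form reduce to one-variable inequalities, obtained by restricting $U$ to the line $t\mapsto(x+th,y+tk)$ and using its explicit form.

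\textbf{Applying Itô's formula.} Because $U$ is only $C^1$, I would first mollify it: take $U^\delta=U*\rho_\delta$ in a finite-dimensional reduction. Properties (2) and (3) survive mollification up to $O(\delta)$ errors. Then I apply Itô's formula to $U^\delta(X_t,Y_t)$ and obtain four pieces:
\begin{itemize}
\item a local martingale;
\item the continuous second-order term $\tfrac12\int D^2U^\delta\,d[(X^c,Y^c)]$;
\item the jump sum $\sum_{s\le t}\bigl(U^\delta(X_s,Y_s)-U^\delta(X_{s-},Y_{s-})-\langle\nabla U^\delta,(\Delta X_s,\Delta Y_s)\rangle\bigr)$;
\item the initial value $U^\delta(X_0,Y_0)$.
\end{itemize}
By the characterization of $Y\ll X$ recalled after the definition, $d[Y^c]\le d[X^c]$, so the continuous term is nonpositive. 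Also $|\Delta Y_s|\le|\Delta X_s|$, so each jump summand is nonpositive by (3). Since $|Y_0|\le|X_0|$, property (2) gives $U^\delta(X_0,Y_0)\le C\delta$.

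\textbf{Closing the estimate.} I would stop at times $\tau_N$ that localize the martingale term and keep $X,Y$ bounded. Taking expectations gives
\[
\mathbb E\,V(X_{t\wedge\tau_N},Y_{t\wedge\tau_N})\le \mathbb E\,U^\delta(\cdots)\le C\delta .
\]
Letting $\delta\to0$ and then $N\to\infty$ uses the growth bound $|U|\lesssim|x|^p+|y|^p$ and Fatou's lemma. This gives $\mathbb E|Y_t|^p\le(p^*-1)^p\|X\|_p^p$, and taking the supremum over $t$ finishes. Sharpness comes from Burkholder's discrete examples.

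\textbf{Orthogonal case.} Here I would follow Ba\~nuelos--Wang, with $V(x,y)=|y|^p-\cot^p(\pi/2p^*)|x|^p$. The majorant is built from a harmonic function on a cone in the plane, namely the real part of $(|x|+i|y|)^{p}$ rotated to the critical angle; this makes $U_{xx}+U_{yy}\le0$ while still $U\ge V$. Orthogonality gives $[X^c,Y^c]=0$ and $\Delta X\,\Delta Y=0$. Combined with $|\Delta Y|\le|\Delta X|$, the second fact forces $\Delta Y\equiv0$. So $Y$ is continuous, and subordination gives $d[Y^c]\le d[X^c]$. The continuous drift $U_{xx}\,d[X^c]+U_{yy}\,d[Y^c]$ is then nonpositive provided $U_{yy}\le0$ or superharmonicity can be used after rearranging; the jump terms involve only $x$-increments and need concavity of $U$ in $x$ along lines.

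\textbf{Main obstacle.} I expect the hardest step to be verifying these two pointwise properties of the harmonic majorant: the correct sign of $U_{xx}$ and $U_{yy}$ (or the superharmonicity trade-off), and concavity of $U$ in $x$. This is where the cone-angle computation determining the constant $\cot(\pi/2p^*)$ enters. A secondary difficulty is the regularization of $U$ across the cone boundary, which I would handle with the same mollify-then-localize scheme as in the first inequality.
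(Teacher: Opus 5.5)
The paper does not prove this statement; it quotes it from \cite{W95,BW96}. Your outline follows the route of those papers. The first inequality goes through along your lines after three repairs, but the orthogonal case has a genuine gap.

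\textbf{First inequality.}
(i) Zigzag concavity must hold \emph{exactly} for $U^\delta$, not up to $O(\delta)$. A c\`adl\`ag martingale can have infinitely many jumps on $[0,t]$, so per-jump errors cannot be summed. Exactness is in fact automatic: the property ``$t\mapsto U(x+th,y+tk)$ is concave whenever $|k|\le|h|$'' is translation invariant and is preserved by averaging against $\rho_\delta\ge0$. Note also that $U$ is not $C^1$. It has concave kinks across $\{y=0\}$ when $p>2$ and across $\{x=0\}$ when $p<2$, which is why you need this global form of (3).

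(ii) The continuous bracket is a positive semidefinite matrix-valued measure, including cross terms. Subordination controls it only through $\operatorname{tr}d[Y^c]\le\operatorname{tr}d[X^c]$. So the sign of $D^2U^\delta[(h,k)]$ on the cone $|k|\le|h|$ does not by itself make the trace nonpositive. You need
\[
D^2U^\delta(x,y)[(h,k)]\le-c(x,y)\bigl(|h|^2-|k|^2\bigr)\quad\text{for all }(h,k),\qquad c(x,y)\ge0.
\]
This can be checked directly for Burkholder's function, or derived from the cone condition via the S-lemma; compare the Schur-complement bound in \cref{lem:continuous-defect}.

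(iii) Letting $N\to\infty$ uses $\mathbb E|X_{t\wedge\tau_N}|^p\le\|X\|_p^p$, that is, optional sampling, so $X$ must be a true martingale. For strict local martingales the statement with $\|X\|_p=\sup_t\|X_t\|_p$ is actually false. Take $B$ a three-dimensional Brownian motion started at $e_1$. Then $X=1/|B|$ is bounded in $L^2$. With $\tau_N$ the hitting time of level $N$, the process $Y:=X^{\tau_N}-1$ satisfies $Y\ll X$ but $\|Y\|_2\ge\sqrt N-1$. This does not affect the paper, whose martingales are square integrable.

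\textbf{Orthogonal case.}
You never pin down what the majorant must satisfy. Your deduction $\Delta Y\equiv0$ is correct. Together with $[X^c,Y^c]=0$, the drift is $\frac12\bigl(U_{xx}\,d[X^c]+U_{yy}\,d[Y^c]\bigr)$. The argument closes through
\[
U_{xx}a+U_{yy}b=U_{xx}(a-b)+(U_{xx}+U_{yy})\,b,\qquad 0\le b\le a.
\]
So the requirements are:
\begin{itemize}
\item $x\mapsto U(x,y)$ is concave, which also handles the jumps of $X$;
\item $\Delta U\le0$ in the sense of distributions;
\item $U\le0$ on $\{|y|\le|x|\}$;
\item $U\ge V$.
\end{itemize}
Your alternative ``$U_{yy}\le0$'' is not the right condition. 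The actual majorant is harmonic on the relevant sector, where $U_{yy}=-U_{xx}\ge0$.

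More importantly, you never produce the majorant. For $1<p\le2$, where $\cot(\pi/2p^*)=\tan(\pi/2p)$, take
\[
U=-\frac{\sin^{p-1}(\pi/2p)}{\cos(\pi/2p)}R^p\cos(p\theta),\qquad |x|+iy=Re^{i\theta}.
\]
Then for $x\neq0$ one has $U_{xx}=-c_pR^{p-2}\cos((p-2)\theta)<0$ with $c_p>0$, and the kink across $\{x=0\}$ is concave. The inequality $U\ge V$ is exactly Pichorides' inequality
\[
|\sin\theta|^p\le\tan^p\bigl(\tfrac{\pi}{2p}\bigr)|\cos\theta|^p-\frac{\sin^{p-1}(\pi/2p)}{\cos(\pi/2p)}\cos(p\theta),\qquad |\theta|\le\tfrac\pi2.
\]
This is the step that produces the constant, and it is the one your outline defers.

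For $p>2$ a separate, piecewise majorant is required. Subordination is not symmetric in $X$ and $Y$, so no duality reduction to $p<2$ is available.

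Finally, sharpness of the cotangent constant comes from Pichorides' extremals for conjugate harmonic functions composed with Brownian motion, not from Burkholder's examples.
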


\begin{remark}
Differential subordination and orthogonality in
\cref{thm:sharp-martingale-inequalities} are conditions on the
optional brackets. For discontinuous martingales, the weaker condition
\[
\langle X,Y\rangle\equiv0
\]
will be called predictable orthogonality. Similarly, one may say that
$Y$ is predictably subordinate to $X$ if $|Y_0|\leq|X_0|$ and
$\langle X\rangle-\langle Y\rangle$
is nonnegative and nondecreasing. These predictable conditions do not,
in general, imply their optional counterparts. Consequently, the sharp
inequalities above cannot be invoked from these conditions alone.
\end{remark}

\subsection{The Dunkl process}

We now record the probabilistic features of the Dunkl setting that will
be used later. The Dunkl process $X=(X_t)_{t\geq0}$ is the c\`adl\`ag Feller process such that, for every Borel set $A\subset\R^n$,
\[
\mathbb P_x(X_t\in A)
=
\int_A h_t(x,y)\,dw(y),
\]
that is, its transition semigroup is
\[
H_t=e^{t\Delta_\kappa/2}
\]
and the infinitesimal generator is
\[
\mathcal L_\kappa:=\frac12\Delta_\kappa.
\]

We work with the usual augmentation of the natural filtration of $X$,
namely
\[
\Sigma_t^X
:=
\bigcap_{u>t}
\left(
\sigma(X_s:0\leq s\leq u)
\vee\mathcal N
\right),
\]
where $\mathcal N$ denotes the collection of null sets. The difference term
in $\Delta_\kappa$ produces the nonzero L\'evy kernel described below
and therefore accounts for the jumps of the process.

\subsubsection{The L\'evy kernel, reflection jumps and martingale decomposition}

The following description of the L\'evy system and the reflection jumps
is taken from \cite{GallardoYor2006_Chaotic}. By \eqref{eq:dunkl-laplacian}, away from the reflection hyperplanes the
generator $\mathcal L_\kappa$ can be written as
\begin{align}
\mathcal L_\kappa f(x)
=
\frac12\Delta f(x)
+
\sum_{\alpha\in\RR_+}
\kappa(\alpha)
\frac{\partial_\alpha f(x)}{\langle\alpha,x\rangle}
+
\int_{\R^n}
\bigl(f(y)-f(x)\bigr)N_\kappa(x,dy),
\end{align}
where the L\'evy kernel written in terms of the post-jump position is
\begin{equation*}
N_\kappa(x,dy)
=
\sum_{\alpha\in\RR_+}
r_\alpha(x)\,\delta_{\sigma_\alpha(x)}(dy),
\qquad
r_\alpha(x)
:=
\frac{\kappa(\alpha)}{\langle\alpha,x\rangle^2}
\ind_{\{\langle\alpha,x\rangle\neq0\}},
\end{equation*}
with $r_\alpha=0$ on $H_\alpha$.
Since
$\sigma_\alpha x-x=-\langle\alpha,x\rangle\alpha$,
the corresponding kernel in terms of jump increments is
\begin{equation*}
\mathcal K_\kappa(x,dz)
=
\sum_{\alpha\in\RR_+}
r_\alpha(x)\,\delta_{-\langle\alpha,x\rangle\alpha}(dz).
\end{equation*}
If $\mu^X$ is the jump measure of the Dunkl process, its predictable
compensator is
\begin{equation*}
\nu^X(dt,dz)
=
\mathcal K_\kappa(X_{t-},dz)\,dt.
\end{equation*}
Consequently, for every nonnegative predictable function $h$,
\begin{equation}
\begin{split}
\mathbb E_x\left[
\sum_{0<s\leq t}
h_s(X_{s-},X_s)\ind_{\{\Delta X_s\neq0\}}
\right]
=
\mathbb E_x\left[
\int_0^t
\sum_{\alpha\in\RR_+^\kappa}
r_\alpha(X_{s-})h_s(X_{s-},\sigma_\alpha X_{s-})\,ds
\right].
\end{split}
\label{compensation formula}
\end{equation}

The support of $N_\kappa(x,\cdot)$ shows that every nonzero jump is a reflection jump: almost surely, if $\Delta X_s\neq0$, then there exists $\alpha\in\RR_+^\kappa$ such that
\[
X_s=\sigma_\alpha X_{s-},
\qquad
\Delta X_s=-\langle\alpha,X_{s-}\rangle\alpha.
\]
At a jump of type $\alpha$, define
\[
\Delta_\alpha f(x)
:=
f(\sigma_\alpha x)-f(x)
=
-\frac{\langle\alpha,x\rangle}{\sqrt{\kappa(\alpha)}}J^\alpha f(x).
\]
The jump intensity and the jump size then satisfy, away from $H_\alpha$, the identity
\[
r_\alpha(x)\Delta_\alpha f(x)\Delta_\alpha g(x)
=
J^\alpha f(x)J^\alpha g(x).
\]
The values on the reflection hyperplanes do not affect the time integrals below, since the transition densities imply that the process spends zero Lebesgue time there.
Notice that roots with zero multiplicity do not produce jumps. This is why we assumed for simplicity that $\kappa(\alpha)>0$ for $\alpha\in\RR_+$.
The next theorem decomposes the Dunkl process into its continuous and
jump martingale coordinates. We also record the corresponding It\^o
formula.

\begin{theorem}[Martingale decomposition, \cite{GallardoYor2006_Chaotic}]
Let $x\in\R^n\setminus\{0\}$. Under $\mathbb P_x$, the Dunkl process
admits the decomposition
\begin{equation}
X_t
=
x+B_t
+
\sum_{\alpha\in\RR_+}
\sqrt{\kappa(\alpha)}\,M_t^\alpha\alpha,
\end{equation}
where $B$ is a standard $n$-dimensional Brownian motion and
$(M^\alpha)_{\alpha\in\RR_+}$ is a family of pairwise
orthogonal normal martingales. More precisely,
\[
\langle M^\alpha\rangle_t=t,
\qquad
[M^\alpha,M^\beta]\equiv0
\quad(\alpha\neq\beta),
\]
and $[B^j,M^\alpha]\equiv0$
for every $j$ and $\alpha$. The martingales $M^\alpha$ have paths of
finite variation and are given by
\begin{align*}
M_t^\alpha
=
\sum_{0<s\leq t}
\frac{-\langle\alpha,X_{s-}\rangle}{\sqrt{\kappa(\alpha)}}
\ind_{\{X_s=\sigma_\alpha X_{s-}\neq X_{s-}\}}
+
\int_0^t\frac{\sqrt{\kappa(\alpha)}}{\langle\alpha,X_{s-}\rangle}\,ds.
\end{align*}
\end{theorem}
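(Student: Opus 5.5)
The decomposition has two parts: the continuous part is identified through L\'evy's characterization, and the jump parts $M^\alpha$ are identified through the L\'evy system \eqref{compensation formula}. Away from the hyperplanes, the generator is
\[
\mathcal L_\kappa=\tfrac12\Delta+\text{drift}+\text{jump part}.
\]
I apply it to the coordinate functions $f(x)=x_j$, after localizing with stopping times that keep $X$ bounded and away from the origin. The drift term is
\[
\sum_\alpha \kappa(\alpha)\alpha_j/\langle\alpha,x\rangle,
\]
and the jump term is
\[
\sum_\alpha r_\alpha(x)\bigl(-\langle\alpha,x\rangle\alpha_j\bigr)=-\sum_\alpha \kappa(\alpha)\alpha_j/\langle\alpha,x\rangle,
\]
so the two cancel. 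Hence $\mathcal L_\kappa x_j=0$, and $X^j_t-x_j$ is a local martingale by Dynkin's formula.

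Next I split $X$ into its continuous and purely discontinuous parts. I define $M^\alpha$ by the formula in the statement: the sum of the $\alpha$-type jumps $-\langle\alpha,X_{s-}\rangle/\sqrt{\kappa(\alpha)}$ minus their compensator. By \eqref{compensation formula}, the compensator is $\int_0^t r_\alpha(X_{s-})\bigl(-\langle\alpha,X_{s-}\rangle/\sqrt{\kappa(\alpha)}\bigr)ds$, which is exactly $-\int\sqrt{\kappa(\alpha)}/\langle\alpha,X_{s-}\rangle\,ds$. So $M^\alpha$ is a compensated sum of jumps, hence a purely discontinuous local martingale of finite variation, provided the sums converge. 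Local integrability of $1/\langle\alpha,X_{s-}\rangle$ is the point I expect to be the main obstacle. I would handle it using the Bessel-type radial behaviour: near $H_\alpha$, the component $\langle\alpha,X\rangle$ behaves like a Bessel process of index $\kappa(\alpha)>0$ with sign flips, which gives $\mathbb E\int_0^t|\langle\alpha,X_s\rangle|^{-1}ds<\infty$. This is consistent with the finiteness of the transition densities against $w_\kappa$, since the weight $|\langle\alpha,y\rangle|^{2\kappa}$ dominates $|\langle\alpha,y\rangle|^{-1}$ only after using the $|\cdot|^{2\kappa}$ factor together with the one-dimensional Bessel estimate.

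Setting $B_t:=X_t-x-\sum_\alpha\sqrt{\kappa(\alpha)}M^\alpha_t\alpha$, the process $B$ is a local martingale whose jumps cancel. Indeed, each jump of $X$ equals $-\langle\alpha,X_{s-}\rangle\alpha=\sqrt{\kappa(\alpha)}\Delta M^\alpha_s\,\alpha$, and a.s.\ no two jump types occur at the same time. Hence $B$ is continuous.

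To identify $B$ as a Brownian motion, I apply It\^o's formula for semimartingales to $f(x)=x_jx_k$ and compare with Dynkin's formula for $\mathcal L_\kappa(x_jx_k)$. The continuous second-order part gives $\delta_{jk}$. The drift and jump parts of $\mathcal L_\kappa(x_jx_k)$ match exactly the jump terms in It\^o's formula: using $r_\alpha\Delta_\alpha f\Delta_\alpha g=J^\alpha fJ^\alpha g$, they yield $\sum_{0<s\le t}\Delta X^j_s\Delta X^k_s$ plus its compensator difference. It follows that $[B^j,B^k]_t=\delta_{jk}t$, so $B$ is a standard Brownian motion by L\'evy's characterization.

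It remains to verify the stated brackets. Since each $M^\alpha$ is purely discontinuous and $B$ is continuous, $[B^j,M^\alpha]=0$. For $\alpha\neq\beta$, $[M^\alpha,M^\beta]=\sum\Delta M^\alpha\Delta M^\beta=0$, because the two jump types never occur simultaneously. This follows since the L\'evy kernel is a sum of point masses and jump times are totally inaccessible, so with probability one no time carries jumps of two types. Finally, $\langle M^\alpha\rangle$ is the compensator of $\sum(\Delta M^\alpha)^2$. By \eqref{compensation formula} it equals
\[
\int_0^t r_\alpha(X_{s-})\langle\alpha,X_{s-}\rangle^2/\kappa(\alpha)\,ds=t,
\]
using that the process spends zero time on $H_\alpha$. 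This shows that the $M^\alpha$ are normal martingales.
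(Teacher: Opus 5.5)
The paper does not prove this statement. It is quoted from Gallardo--Yor, so there is no internal argument to compare against, and your proposal has to stand on its own. Its architecture is the standard one and is sound:
\begin{itemize}
\item $\Delta_\kappa x_j=0$ makes $X$ a local martingale;
\item the L\'evy system \eqref{compensation formula} produces the compensated $\alpha$-jump sums $M^\alpha$, together with $[M^\alpha,M^\beta]=0$ and $\langle M^\alpha\rangle_t=t$;
\item comparing It\^o's formula with Dynkin's formula for $x_jx_k$, where $\mathcal L_\kappa(x_jx_k)=\Gamma_\kappa(x_j,x_k)=\delta_{jk}+\sum_\alpha\kappa(\alpha)\alpha_j\alpha_k$, identifies the continuous remainder $B$ as a Brownian motion.
\end{itemize}

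Two small corrections are needed.
\begin{itemize}
\item You should not localize away from the origin. For $n=1$ and $\kappa<1/2$, $|X|$ is a Bessel process of dimension $2\kappa+1<2$ and hits $0$, so your localizing times would not increase to $\infty$. The restriction is also unnecessary. Since $T_ix_j=\delta_{ij}+\sum_\alpha\kappa(\alpha)\alpha_i\alpha_j$ is constant, $\Delta_\kappa x_j=0$ everywhere. Reflections preserve $|x|$, so exit times of centered balls localize cleanly.
\item Simultaneous jumps of two types are excluded geometrically, not by total inaccessibility. The identity $\sigma_\alpha y=\sigma_\beta y\neq y$ would give $\langle\alpha,y\rangle\alpha=\langle\beta,y\rangle\beta$ with $\alpha,\beta$ linearly independent. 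Hence each jump has a pathwise determined type.
\end{itemize}

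The genuine gap is the step you flag yourself: $\mathbb E_x\int_0^t|\langle\alpha,X_s\rangle|^{-1}ds<\infty$. The absolute convergence of the jump sum, the finite variation of $M^\alpha$, and the finiteness of its compensator all rest on it.

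The Bessel comparison is exact only in rank one. In higher rank, $\langle\alpha,X\rangle$ is not autonomous:
\begin{itemize}
\item between jumps it carries the drift $\sum_\beta\kappa(\beta)\langle\alpha,\beta\rangle/\langle\beta,X\rangle$, which is unbounded and of either sign near $H_\alpha\cap H_\beta$;
\item a $\sigma_\beta$-jump replaces $\langle\alpha,X\rangle$ by $\langle\sigma_\beta\alpha,X\rangle$.
\end{itemize}
So ``Bessel with sign flips'' gives no control near intersections of hyperplanes, which is exactly where control is needed. Your density remark can be made rigorous, using a Gaussian bound $h_s(x,y)\leq Cs^{-\mathbf N/2}e^{-d(x,y)^2/(2s)}$ in the orbit distance $d$ plus local integrability of $|\langle\alpha,y\rangle|^{2\kappa(\alpha)-1}$. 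As written, however, it is not an argument.

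A fix exists inside your own Dynkin framework. Let $\phi(x):=\sum_{\beta\in\RR_+}|\langle\beta,x\rangle|$, which is $G$-invariant, convex and Lipschitz, and let $\phi_\varepsilon$ be a radial mollification. Then:
\begin{itemize}
\item the difference terms of $\mathcal L_\kappa\phi_\varepsilon$ vanish, and $\Delta\phi_\varepsilon\geq0$;
\item $\partial_\alpha\phi_\varepsilon(x)/\langle\alpha,x\rangle\geq0$, because $t\mapsto\phi_\varepsilon(x+t\alpha)$ is convex and symmetric about the point where this line meets $H_\alpha$;
\item off the hyperplanes, $\partial_\alpha\phi(x)/\langle\alpha,x\rangle=\langle2\rho_x,\pm\alpha\rangle/|\langle\alpha,x\rangle|\geq c/|\langle\alpha,x\rangle|$. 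Here $2\rho_x$ is the sum of the roots positive at $x$, the sign makes $\pm\alpha$ positive at $x$, and $c>0$ does not depend on $x$.
\end{itemize}
Dynkin's formula at $t\wedge\sigma_R$, Fatou's lemma as $\varepsilon\to0$, and $\mathcal L_\kappa|x|^2=\mathbf N$ then give
\[
c\,\mathbb E_x\int_0^t\sum_{\alpha\in\RR_+}\frac{\kappa(\alpha)}{|\langle\alpha,X_s\rangle|}\,ds
\leq\sup_R\mathbb E_x\phi(X_{t\wedge\sigma_R})
\leq C\bigl(|x|^2+\mathbf Nt\bigr)^{1/2}<\infty .
\]
This uses precisely $\kappa(\alpha)>0$, and in rank one it reduces exactly to your Bessel argument.
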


\begin{theorem}[\cite{GallardoYor2006_Chaotic}]
Let $f\in C_c^{2,1}(\R^n\times[0,\infty))$. Then
\begin{align*}
f(X_t,t)
&=
f(X_0,0)
+
\int_0^t\langle\nabla_xf(X_{s-},s),dB_s\rangle\\
&\quad+
\sum_{\alpha\in\RR_+}\int_0^t J^\alpha f(X_{s-},s)\,dM_s^\alpha
+
\int_0^t\left(\partial_sf+\frac12\Delta_\kappa f\right)(X_{s-},s)\,ds.
\end{align*}
\end{theorem}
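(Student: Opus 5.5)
The plan is to reduce the It\^o formula to the classical one for c\`adl\`ag semimartingales, applied to the space-time process $(X_t,t)$, and then use the Gallardo--Yor martingale decomposition together with the L\'evy system of $X$ to identify each term.

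\textbf{Step 1: the general It\^o formula.} Since $X$ is a semimartingale, given by the decomposition above together with the finite-variation drift hidden in $M^\alpha$, the general It\^o formula for $f\in C_c^{2,1}$ gives
\[
f(X_t,t)=f(X_0,0)+\int_0^t\langle\nabla_xf(X_{s-},s),dX_s\rangle+\int_0^t\partial_sf\,ds+\tfrac12\int_0^t\Delta_xf\,d[B]\text{-part}+\sum_{0<s\leq t}\bigl(\Delta f-\langle\nabla_xf(X_{s-},s),\Delta X_s\rangle\bigr).
\]
The continuous martingale part of $X$ is $B$, so the second-order term is $\frac12\int_0^t\Delta_xf(X_{s-},s)\,ds$.

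\textbf{Step 2: split $dX_s$.} We write $dX_s=dB_s+\sum_\alpha\sqrt{\kappa(\alpha)}\,\alpha\,dM^\alpha_s$. Then $dM^\alpha$ splits into a jump sum and a drift $\sqrt{\kappa(\alpha)}\,ds/\langle\alpha,X_{s-}\rangle$. The jump sum cancels exactly against the linear correction $\langle\nabla_xf,\Delta X_s\rangle$ in the jump term, because at an $\alpha$-jump $\Delta X_s=-\langle\alpha,X_{s-}\rangle\alpha$. The drift produces $\int_0^t\sum_\alpha\kappa(\alpha)\,\partial_\alpha f/\langle\alpha,X_{s-}\rangle\,ds$.

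What remains is $\sum_{s\leq t}\Delta_\alpha f(X_{s-},s)$ over reflection jumps. I compensate this using \eqref{compensation formula}:
\[
\sum_{s\le t}\Delta_\alpha f=\int_0^t\Delta_\alpha f(X_{s-},s)\,dM^\alpha\text{-type martingale}+\int_0^t r_\alpha\Delta_\alpha f\,ds.
\]
Concretely, since $\Delta_\alpha f=-\frac{\langle\alpha,x\rangle}{\sqrt{\kappa}}J^\alpha f$, the integral $\int J^\alpha f\,dM^\alpha$ equals the jump sum plus $\int J^\alpha f\cdot\sqrt{\kappa}/\langle\alpha,X\rangle\,ds$. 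The latter drift equals $-\int r_\alpha\Delta_\alpha f\,ds$, so
\[
\sum\Delta_\alpha f=\int_0^t J^\alpha f\,dM^\alpha+\int_0^t r_\alpha\Delta_\alpha f\,ds .
\]
This step only needs the explicit form of $M^\alpha$, not the compensator.

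\textbf{Step 3: collect the drift.} The $ds$-terms are
\[
\partial_sf+\tfrac12\Delta f+\sum_\alpha\kappa(\alpha)\Bigl(\frac{\partial_\alpha f}{\langle\alpha,x\rangle}-\frac{f(x)-f(\sigma_\alpha x)}{\langle\alpha,x\rangle^2}\Bigr),
\]
which equals $\partial_sf+\frac12\Delta_\kappa f$ by \eqref{eq:dunkl-laplacian}.

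\textbf{Main obstacle.} The delicate point is the set where $X_{s-}$ lies on a hyperplane $H_\alpha$, where $1/\langle\alpha,X_{s-}\rangle$ blows up. Integrability of $\int ds/|\langle\alpha,X_s\rangle|$ must be justified separately.

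Since $f$ is $C^1$, the combinations $J^\alpha f$ and $\partial_\alpha f/\langle\alpha,x\rangle$ are not individually bounded near the hyperplane. Only the full generator expression is bounded there, by Taylor expansion to second order, since $f(x)-f(\sigma_\alpha x)$ vanishes to first order on $H_\alpha$. $J^\alpha f$ itself is bounded because it is a difference quotient.

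I would therefore localize using stopping times $\tau_\varepsilon=\inf\{s:\min_\alpha|\langle\alpha,X_s\rangle|<\varepsilon\}$ and pass to $\varepsilon\to0$. This uses the facts that the process spends zero Lebesgue time on the hyperplanes and, from Gallardo--Yor, that $\int_0^t|\langle\alpha,X_s\rangle|^{-1}ds<\infty$ a.s. The passage to the limit is by dominated convergence for the drift terms and by $L^2$ convergence via $\langle M^\alpha\rangle_t=t$ for the stochastic integrals.
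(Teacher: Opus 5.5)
The paper gives no proof of this statement. It quotes it from Gallardo--Yor, together with the martingale decomposition it relies on. Your Steps 1--3 are a correct derivation from that decomposition:
\begin{itemize}
\item the continuous martingale part of $X$ is $B$, because each $M^\alpha$ has finite variation;
\item the jump part of $\sqrt{\kappa(\alpha)}\int\partial_\alpha f\,dM^\alpha$ cancels $\langle\nabla_xf(X_{s-},s),\Delta X_s\rangle$, since $\Delta X_s=-\langle\alpha,X_{s-}\rangle\alpha$;
\item the identity $\sum_{\alpha\text{-jumps}}\Delta_\alpha f=\int J^\alpha f\,dM^\alpha+\int r_\alpha\Delta_\alpha f\,ds$ holds pathwise;
\item the collected drift is exactly $\partial_sf+\frac12\Delta_\kappa f$ by \eqref{eq:dunkl-laplacian}.
\end{itemize}
One small slip: in your last paragraph the unbounded quantity is $J^\alpha f/\langle\alpha,x\rangle$, not $J^\alpha f$.

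Your closing localization is unnecessary, and as formulated it would not work.

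\emph{Why it is unnecessary.} The decomposition theorem you invoke asserts that $M^\alpha$ has paths of finite variation and is given by the displayed formula. This already means $\sum_{s\leq t}|\Delta M^\alpha_s|<\infty$ and $\int_0^t|\langle\alpha,X_s\rangle|^{-1}ds<\infty$ almost surely. Both $\partial_\alpha f$ and $J^\alpha f$ are bounded; for $J^\alpha f$, apply the mean value theorem on the segment from $\sigma_\alpha x$ to $x$, whose length is $|\langle\alpha,x\rangle|\,|\alpha|$. Consequently:
\begin{itemize}
\item every term in Step 2 is an absolutely convergent pathwise integral, including the individually singular drifts $\kappa(\alpha)\partial_\alpha f/\langle\alpha,X_s\rangle$ and $r_\alpha\Delta_\alpha f$, which are $O(|\langle\alpha,X_s\rangle|^{-1})$;
\item splitting the jump sum in Step 1 into $\sum\Delta f$ and $\sum\langle\nabla_xf,\Delta X_s\rangle$ is legitimate, because $\sum_{s\leq t}|\Delta X_s|<\infty$;
\item stochastic integrals of bounded predictable integrands against the finite-variation martingales $M^\alpha$ coincide with the Lebesgue--Stieltjes integrals.
\end{itemize}
So your identity holds for every $t$ with no limiting procedure.

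\emph{Why it would not work.} The stopping times $\tau_\varepsilon$ need not tend to infinity. In rank one, $|X|$ is a Bessel process of dimension $2\kappa+1$, which hits $0$ when $\kappa<1/2$. Then $\tau_\varepsilon$ increases to the finite hitting time of $\bigcup_\alpha H_\alpha$, and letting $\varepsilon\to0$ yields the formula only up to that time.

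Replace the last paragraph by the finite-variation observation above and your proof is complete.
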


\subsection{The upper half-space process}

We now construct the process used in the probabilistic representation
of the Poisson extension. Fix $\lambda>0$ and $x\in\R^n$. Let
$X$ be the Dunkl process started at $x$ and $Y^\lambda$ be a
one-dimensional Brownian motion started at $\lambda$, independent of
the Dunkl process. We write the $(n+1)$-dimensional state process as
\[
Z^\lambda:=(X,Y^\lambda).
\]
Thus $Z^\lambda$ starts at $(x,\lambda)$ on the horizontal hyperplane
$\R^n\times\{\lambda\}$. Under $\mathbb P_{(x,\lambda)}$, the process
$Z^\lambda$ is a Feller process with generator
\[
\mathcal L_Z
=
\frac12\left(\Delta_\kappa+\frac{\partial^2}{\partial y^2}\right).
\]
The driving martingale is the $(n+m+1)$-dimensional process
\[
\mathbf Z_t^\lambda
:=
\bigl(B_t^1,\ldots,B_t^n,
M_t^{\alpha_1},\ldots,M_t^{\alpha_m},Y_t^\lambda\bigr)^\top.
\]
We distinguish throughout between the state process $Z^\lambda$, at which the Poisson extension is evaluated, and the driving martingale $\mathbf Z^\lambda$, against which the stochastic integrals are taken.

For completeness, these processes may be realized on the canonical path
space
\[
\Omega
:=
\mathbb D([0,\infty),\R^n)\times C([0,\infty),\R),
\]
where $\mathbb D([0,\infty),\R^n)$ is the Skorokhod space of c\`adl\`ag functions.
Since $dw$ is not a probability measure, introduce the $\sigma$-finite
measure on the path space
\[
\mathbb P^\lambda(A)
:=
\int_{\R^n}\mathbb P_{(x,\lambda)}(A)\,dw(x)
\]
and, for every $\mathbb P^\lambda$-integrable random variable $\xi$,
set
\begin{equation*}
\mathbb E^\lambda[\xi]
:=
\int_{\R^n}\mathbb E^{(x,\lambda)}[\xi]\,dw(x).
\end{equation*}
Let $\tau^\lambda:=\inf\{t\geq0:Y_t^\lambda=0\}$
be the exit time of $Z^\lambda$ from
$\R^n\times(0,\infty)$. Since $\tau^\lambda$ depends only on
$Y^\lambda$, it is independent of $X$.

\begin{proposition}
\label[proposition]{lambda exp}
For every $\lambda>0$ and for every function $f\in L^1(dw)$ we have
\[
\mathbb E^\lambda[f(X_{\tau^\lambda})]
=
\int_{\R^n}f(x)\,dw(x).
\]
Moreover, if $\varphi\geq0$ is measurable, then
\begin{equation}
\mathbb E^\lambda\left[
\int_0^{\tau^\lambda}\varphi(Z_s^\lambda)\,ds
\right]
=
\int_0^\infty\int_{\R^n}
2(y\wedge\lambda)\varphi(x,y)\,dw(x)\,dy.
\label{eq:occupation-identity}
\end{equation}
The same formula holds for signed $\varphi$ whenever the right-hand
side with $|\varphi|$ is finite.
\end{proposition}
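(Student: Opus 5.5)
The plan is to use the independence of $X$ and $Y^\lambda$, together with two facts: the Dunkl heat semigroup preserves $dw$, and Brownian motion killed at $0$ has an explicit Green function.

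\textbf{First identity.} Since $\tau^\lambda$ depends only on $Y^\lambda$, conditioning on $Y^\lambda$ gives
\[
\mathbb E^{(x,\lambda)}[f(X_{\tau^\lambda})]
=\int_0^\infty H_tf(x)\,\mathbb P_\lambda(\tau^\lambda\in dt).
\]
I first take $f\geq0$. Integrating in $x$ against $dw$ and using Tonelli, the inner integral becomes
\[
\int_{\R^n}H_tf(x)\,dw(x)=\int_{\R^n}\int_{\R^n}h_t(x,y)f(y)\,dw(y)\,dw(x)=\int_{\R^n} f\,dw .
\]
This uses the symmetry of $h_t$ and the conservativeness \eqref{E: dunkl heat conservative}. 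Since $\tau^\lambda<\infty$ almost surely, $\mathbb P_\lambda(\tau^\lambda\in dt)$ is a probability measure, and the identity follows for $f\geq0$. For general $f\in L^1(dw)$, I split $f=f^+-f^-$; both pieces give finite values, so the identity holds by linearity.

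\textbf{Occupation identity.} Take $\varphi\geq0$. By Fubini and independence,
\[
\mathbb E^{(x,\lambda)}\Big[\int_0^{\tau^\lambda}\varphi(X_s,Y_s^\lambda)\,ds\Big]
=\int_0^\infty \mathbb E_\lambda\big[\ind_{\{s<\tau^\lambda\}}\,(H_s\varphi(\cdot,Y^\lambda_s))(x)\big]\,ds.
\]
Here $H_s$ acts in the $x$ variable for each fixed value of $Y_s^\lambda$; this is legitimate because $X$ is independent of $Y^\lambda$ and $\{s<\tau^\lambda\}$ is $Y$-measurable. Integrating in $x$ against $dw$, exactly as above, replaces $H_s\varphi(\cdot,y)$ by $\int_{\R^n}\varphi(x,y)\,dw(x)=:\Phi(y)$. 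It then remains to compute
\[
\mathbb E_\lambda\Big[\int_0^{\tau^\lambda}\Phi(Y_s)\,ds\Big].
\]
I will use the standard Green function of one-dimensional Brownian motion killed at $0$ (generator $\frac12\partial_y^2$), which is $G(\lambda,y)=2(y\wedge\lambda)$. One derivation is via the reflection principle: the killed transition density is $p_s(\lambda,y)=g_s(y-\lambda)-g_s(y+\lambda)$, and $\int_0^\infty p_s\,ds=|y+\lambda|-|y-\lambda|=2(y\wedge\lambda)$. The last equality uses that $\int_0^\infty\big(g_s(a)-g_s(b)\big)\,ds=|b|-|a|$. Alternatively, $G(\lambda,\cdot)$ is the unique solution of $\frac12 G''=-\delta_\lambda$ with $G(0)=0$ and bounded derivative at infinity. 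Combining the two steps gives \eqref{eq:occupation-identity} for $\varphi\geq0$. For signed $\varphi$, I apply the identity to $\varphi^\pm$ under the stated finiteness assumption and subtract.

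\textbf{Main obstacle.} The only delicate point is justifying the measurability and interchange steps with the $\sigma$-finite measure $\mathbb P^\lambda$. All integrands are nonnegative, so Tonelli covers every interchange. The step of integrating out $x$ inside the expectation also relies on the symmetry of $h_t$, i.e. the $dw$-invariance of $H_t$.
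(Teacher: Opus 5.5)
Your proposal is correct and follows essentially the same route as the paper: independence of $X$ and $Y^\lambda$, then the $dw$-invariance of $H_t$ from the symmetry and conservativeness of $h_t$, then the Green function $2(y\wedge\lambda)$ of Brownian motion killed at $0$; your reflection-principle computation and the splitting into positive and negative parts fill in details the paper leaves implicit. One small slip is in your alternative characterization: $\frac12G''=-\delta_\lambda$, $G(0)=0$ and a bounded derivative at infinity do not determine $G$, since every $G$ with slope $A$ on $(0,\lambda)$ and slope $A-2$ on $(\lambda,\infty)$ qualifies, so you need $G$ bounded (equivalently $G'(y)\to0$); your reflection-principle derivation is unaffected.
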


\begin{proof}
Let $\mathbb E_\lambda^Y$ denote expectation with respect to the
one-dimensional Brownian motion started at $\lambda$. The heat
semigroup is conservative and symmetric with respect to $dw$. Hence
\[
\int_{\R^n}H_tg(x)\,dw(x)
=
\int_{\R^n}g(x)\,dw(x)
\]
for every suitable $g$. By independence and conditioning on
$\tau^\lambda$,
\begin{align*}
\mathbb E^\lambda[f(X_{\tau^\lambda})]
=
\int_{\R^n}\mathbb E^{(x,\lambda)}[f(X_{\tau^\lambda})]\,dw(x)
=
\mathbb E_\lambda^Y\left[
\int_{\R^n}H_{\tau^\lambda}f(x)\,dw(x)
\right]
=
\int_{\R^n}f(x)\,dw(x).
\end{align*}

For the occupation identity, let
$q_s^{(0,\infty)}(\lambda,y)$ denote the transition density of a
one-dimensional Brownian motion killed at the origin. By
independence we have
\begin{align*}
\mathbb E^\lambda\left[
\int_0^{\tau^\lambda}\varphi(Z_s^\lambda)\,ds
\right]
&=
\int_0^\infty\int_{\R^n}
\mathbb E^{(x,\lambda)}\left[
\ind_{\{s<\tau^\lambda\}}\varphi(X_s,Y_s^\lambda)
\right]\,dw(x)\,ds\\
&=
\int_0^\infty\int_{\R^n}\int_{\R^n}\int_0^\infty
h_s(x,z)q_s^{(0,\infty)}(\lambda,y)\varphi(z,y)
\,dy\,dw(z)\,dw(x)\,ds.
\end{align*}
Using \eqref{E: dunkl heat conservative} and the Green-kernel identity
\[
\int_0^\infty q_s^{(0,\infty)}(\lambda,y)\,ds
=2(y\wedge\lambda),
\]
we obtain \eqref{eq:occupation-identity}.
\end{proof}

\subsubsection{The stopped Poisson martingale}
Recall that for $z=(x,y)\in\R^n\times(0,\infty)$, the augmented gradient vector is
\[
\widetilde\nabla U_f(z)
:=
\bigl(
\nabla_xU_f(z),
J^{\alpha_1}U_f(z),\ldots,J^{\alpha_m}U_f(z),
\partial_yU_f(z)
\bigr)^\top.
\]
By the properties of the martingales
$M^{\alpha_\ell}$, the orthogonality of $B$ and
$M^{\alpha_\ell}$, and the independence of $Y^\lambda$ from the Dunkl
process, the coordinates of $\mathbf Z^\lambda$ are predictably
orthonormal:
\begin{equation*}
d\left\langle
\mathbf Z^{\lambda,r},\mathbf Z^{\lambda,s}
\right\rangle_t
=\delta_{rs}\,dt,
\qquad
1\leq r,s\leq n+m+1.
\end{equation*}
This is a statement about predictable brackets. The optional brackets
of the coordinates $M^{\alpha_\ell}$ still contain their realized
reflection jumps. \\
Let $f\in\mathcal S(\R^n)$. Since $U_f$ is Dunkl harmonic, we have
\[
\partial_y^2U_f+\Delta_\kappa U_f=0.
\]
It\^o's formula then yields the following representation:
\begin{equation}
f(X_{\tau^\lambda})
=
U_f(Z_0^\lambda)
+
\int_0^{\tau^\lambda}
\left\langle\widetilde\nabla U_f(Z_{s-}^\lambda),d\mathbf Z_s^\lambda\right\rangle.
\label{E: ito}
\end{equation}

\section{Dunkl Martingale Transforms}
\label{sect 4}
Let
$A:\R^n\times(0,\infty)\longrightarrow\mathcal M_{n+m+1}(\R)$
be a bounded measurable matrix-valued function and
\[
\|A\|_\infty
:=
\operatorname*{ess\,sup}_{(x,y)\in\R^n\times(0,\infty)}
\|A(x,y)\|_{\mathrm{op}}.
\]
For $f\in\mathcal S(\R^n)$, define the stopped martingale transform
\begin{equation}
(A\ast f)_t^\lambda
:=
\int_0^{t\wedge\tau^\lambda}
\left\langle
A(Z_{s-}^\lambda)\widetilde\nabla U_f(Z_{s-}^\lambda),
d\mathbf Z_s^\lambda
\right\rangle.
\end{equation}
The exit-distribution identity in \cref{lambda exp} shows that the distribution of $X_{\tau^\lambda}$ under the $\sigma$-finite measure $\mathbb P^\lambda$ is $dw$. We may therefore define the truncated Dunkl martingale projection as
\begin{equation}
\mathcal T_A^\lambda f(x)
:=
\mathbb E^\lambda\left[
(A\ast f)_{\tau^\lambda}^\lambda
\,\middle|\,
X_{\tau^\lambda}=x
\right].
\label{eq:truncated-dunkl-projection}
\end{equation}
In this section, the preliminary constants $C_p$ may depend on the fixed Dunkl structure. They are not used in the quantitative estimates of \cref{single riesz,vector}.

\begin{theorem}
\label{thm:dunkl-martingale-projection}
Let $A$ be as above and let $f,g\in\mathcal S(\R^n)$. Then
\begin{equation}
\langle\mathcal T_A^\lambda f,g\rangle_{L^2(dw)}
=
2\int_{\R^n}\int_0^\infty
(y\wedge\lambda)
\left\langle\widetilde\nabla U_g(x,y),
A(x,y)\widetilde\nabla U_f(x,y)\right\rangle
\,dy\,dw(x).
\label{eq:bilinear_predictable}
\end{equation}
For every $1<p<\infty$, the operator $\mathcal T_A^\lambda$ extends boundedly to $L^p(dw)$ and
\[
\sup_{\lambda>0}
\|\mathcal T_A^\lambda\|_{L^p(dw)\to L^p(dw)}
\leq C_p\|A\|_\infty.
\]
Moreover, there exists a bounded operator $\mathcal T_A:L^p(dw)\to L^p(dw)$ such that
\begin{equation}
\langle\mathcal T_Af,g\rangle
=
2\int_{\R^n}\int_0^\infty
 y\left\langle\widetilde\nabla U_g(x,y),
A(x,y)\widetilde\nabla U_f(x,y)\right\rangle
\,dy\,dw(x)
\label{eq:limit-TA-bilinear-form}
\end{equation}
for $f,g\in\mathcal S(\R^n)$, and
\[
\mathcal T_A^\lambda f\longrightarrow\mathcal T_Af
\qquad\text{in }L^p(dw).
\]
\end{theorem}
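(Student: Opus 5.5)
The plan follows the Gundy--Varopoulos scheme: first compute the bilinear form by duality, then get $L^p$ bounds from the square function estimates of \cref{L:augmented-square-function-bound}, and finally pass to the limit $\lambda\to\infty$.

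\textbf{Step 1: the bilinear identity \eqref{eq:bilinear_predictable}.} The conditional expectation in \eqref{eq:truncated-dunkl-projection} gives, via the exit distribution identity of \cref{lambda exp},
\[
\langle\mathcal T_A^\lambda f,g\rangle=\mathbb E^\lambda\bigl[(A\ast f)^\lambda_{\tau^\lambda}\,g(X_{\tau^\lambda})\bigr].
\]
Next I represent $g(X_{\tau^\lambda})$ through \eqref{E: ito}. The constant term $U_g(Z_0^\lambda)$ contributes zero, since $(A\ast f)^\lambda$ is a martingale started at $0$. So the right-hand side is the expectation of the product of two stochastic integrals against $\mathbf Z^\lambda$.

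For such a product, the expectation equals the expectation of the optional covariation, which in turn equals that of the predictable covariation. By the predictable orthonormality $d\langle\mathbf Z^{\lambda,r},\mathbf Z^{\lambda,s}\rangle=\delta_{rs}dt$, this gives
\[
\mathbb E^\lambda\int_0^{\tau^\lambda}\bigl\langle\widetilde\nabla U_g,A\widetilde\nabla U_f\bigr\rangle(Z_{s-}^\lambda)\,ds .
\]
Since the process spends zero Lebesgue time at jump instants, we may replace $Z_{s-}$ by $Z_s$. The occupation identity \eqref{eq:occupation-identity} then yields \eqref{eq:bilinear_predictable}.

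The justification is to work with $f,g\in\mathcal S$ and check that the integrands are in $L^2$. This holds because $\mathbb E^\lambda\int_0^{\tau^\lambda}|\widetilde\nabla U_f|^2\,ds\le 2\int\!\!\int y|\widetilde\nabla U_f|^2=2\|\widetilde G_\Gamma f\|_2^2<\infty$. Absolute integrability of the signed integrand, needed for the signed version of \eqref{eq:occupation-identity}, follows from Cauchy--Schwarz and the same bound.

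\textbf{Step 2: uniform $L^p$ bounds.} Using $y\wedge\lambda\le y$ and Cauchy--Schwarz in $y$, we get
\[
|\langle\mathcal T_A^\lambda f,g\rangle|\le 2\|A\|_\infty\int_{\R^n}\widetilde G_\Gamma f\,\widetilde G_\Gamma g\,dw\le 2\|A\|_\infty\|\widetilde G_\Gamma f\|_p\|\widetilde G_\Gamma g\|_{p'} .
\]
By \cref{L:augmented-square-function-bound}, the last expression is at most $C_p\|A\|_\infty\|f\|_p\|g\|_{p'}$. Density of $\mathcal S$ in $L^{p'}(dw)$ and duality then give the uniform bound and the extension to $L^p$.

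\textbf{Step 3: the limit operator.} The bilinear form on the right of \eqref{eq:limit-TA-bilinear-form} converges absolutely by the same estimate, so it defines a bounded operator $\mathcal T_A$ on $L^p$ via duality. Moreover,
\[
\langle(\mathcal T_A-\mathcal T_A^\lambda)f,g\rangle=2\int\!\!\int (y-\lambda)_+\langle\widetilde\nabla U_g,A\widetilde\nabla U_f\rangle\,dy\,dw .
\]
This is bounded by $2\|A\|_\infty\|\widetilde G^{>\lambda}f\|_p\|\widetilde G_\Gamma g\|_{p'}$, where $\widetilde G^{>\lambda}$ is the square function with the $y$-integral restricted to $(\lambda,\infty)$.

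For fixed $f\in\mathcal S$, we have $\widetilde G^{>\lambda}f\to0$ pointwise and it is dominated by $\widetilde G_\Gamma f\in L^p$. Dominated convergence therefore gives $\|\widetilde G^{>\lambda}f\|_p\to0$, so $\mathcal T_A^\lambda f\to\mathcal T_Af$ in $L^p$. The uniform bound from Step 2 and density extend this convergence to all $f\in L^p$ by an $\varepsilon/3$ argument.

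\textbf{Main obstacle.} The delicate point is Step 1 in the discontinuous setting. One must justify that $\mathbb E[\int H\,d\mathbf Z\cdot\int K\,d\mathbf Z]=\mathbb E\int\langle H,K\rangle\,dt$ when the integrands are evaluated at $Z_{s-}$, which is where predictability is needed. This is an $L^2$-isometry for stochastic integrals against locally square-integrable martingales, applied under each $\mathbb P_{(x,\lambda)}$ and then integrated in $dw(x)$, which is only $\sigma$-finite.

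I would first prove the identity for $t\wedge\tau^\lambda$ and then let $t\to\infty$ using the $L^2$ bound from Step 1. For fixed $\lambda$ one also needs the convergence $U_f(Z_t)\to f(X_{\tau^\lambda})$, which holds by continuity of $U_f$ up to the boundary for Schwartz $f$.
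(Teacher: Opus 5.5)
Your proposal is correct and follows essentially the same route as the paper. Both arguments rewrite $\langle\mathcal T_A^\lambda f,g\rangle$ through the exit-distribution identity, expand $g(X_{\tau^\lambda})$ by the It\^o representation, and evaluate the expectation with predictable orthonormality plus the occupation identity; then Cauchy--Schwarz and the augmented square function bound give the uniform estimate, and dominated convergence on a tail square function gives the limit as $\lambda\to\infty$ (your tail with weight $y$ on $(\lambda,\infty)$ instead of $(y-\lambda)$ is an immaterial variation, since $(y-\lambda)_+\le y\,\ind_{\{y>\lambda\}}$).
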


\begin{proof}
Fix $\lambda>0$ and suppress the superscript $\lambda$ on $Z^\lambda$ and $\mathbf Z^\lambda$, and write $\tau=\tau^\lambda$. By predictable orthonormality and the occupation identity,
\begin{align*}
\mathbb E^\lambda\left[
\int_0^\tau\left|A(Z_{s-})\widetilde\nabla U_f(Z_{s-})\right|^2ds
\right]
=
2\int_{\R^n}\int_0^\infty
(y\wedge\lambda)\left|A(x,y)\widetilde\nabla U_f(x,y)\right|^2
\,dy\,dw(x),
\end{align*}
and by \cref{L:augmented-square-function-bound} the right-hand side is finite.
The same argument applies to the stochastic integral in $g(X_\tau)$. By the exit-distribution identity and \eqref{eq:truncated-dunkl-projection},
\begin{align*}
\langle\mathcal T_A^\lambda f,g\rangle
&=
\mathbb E^\lambda\left[g(X_\tau)\mathcal T_A^\lambda f(X_\tau)\right]
=
\mathbb E^\lambda\left[g(X_\tau)(A\ast f)_\tau^\lambda\right].
\end{align*}
Using \eqref{E: ito}, write
\[
g(X_\tau)
=
U_g(Z_0)
+
\int_0^\tau\left\langle\widetilde\nabla U_g(Z_{s-}),d\mathbf Z_s\right\rangle.
\]
For $dw$-almost every initial point $x$, the transform $(A\ast f)^\lambda$ is a square-integrable martingale starting from zero, hence the term containing $U_g(Z_0)$ has zero expectation. Since
\begin{align*}
&d\left\langle
\int_0^\cdot\left\langle\widetilde\nabla U_g(Z_{s-}),d\mathbf Z_s\right\rangle,
\int_0^\cdot\left\langle A(Z_{s-})\widetilde\nabla U_f(Z_{s-}),d\mathbf Z_s\right\rangle
\right\rangle_t\\
&\hspace{4em}=
\left\langle\widetilde\nabla U_g(Z_{t-}),A(Z_{t-})\widetilde\nabla U_f(Z_{t-})\right\rangle dt,
\end{align*}
taking expectations and then applying \eqref{eq:occupation-identity} proves \eqref{eq:bilinear_predictable}. Then, Cauchy--Schwarz in the $y$ variable, H\"older's inequality and \cref{L:augmented-square-function-bound} yield
\begin{align*}
|\langle\mathcal T_A^\lambda f,g\rangle|
\leq C_p\|A\|_\infty\|f\|_{L^p(dw)}\|g\|_{L^{p'}(dw)}
\end{align*}
and establish the boundedness of the form in \eqref{eq:limit-TA-bilinear-form} at the same time. It remains to prove strong convergence. For $f,g\in\mathcal S(\R^n)$, subtracting \eqref{eq:bilinear_predictable} from \eqref{eq:limit-TA-bilinear-form} gives
\[
\langle(\mathcal T_A-\mathcal T_A^\lambda)f,g\rangle
=
2\int_{\R^n}\int_\lambda^\infty
(y-\lambda)\left\langle\widetilde\nabla U_g,A\widetilde\nabla U_f\right\rangle
\,dy\,dw.
\]
Define
\[
\widetilde G_{\Gamma,\lambda}^{\mathrm{tail}}f(x)
:=
\left(\int_\lambda^\infty
(y-\lambda)|\widetilde\nabla U_f(x,y)|^2\,dy\right)^{1/2}.
\]
Then
\[
\widetilde G_{\Gamma,\lambda}^{\mathrm{tail}}f\leq\widetilde G_\Gamma f,
\qquad
\widetilde G_{\Gamma,\lambda}^{\mathrm{tail}}f(x)\longrightarrow0
\quad\text{for a.e. }x\in\R^n.
\]
In particular, dominated convergence gives
$\|\widetilde G_{\Gamma,\lambda}^{\mathrm{tail}}f\|_{L^p}\to0$
as $\lambda\to\infty$, and
\begin{align*}
|\langle(\mathcal T_A-\mathcal T_A^\lambda)f,g\rangle|
&\leq
2\|A\|_\infty
\left\|\widetilde G_{\Gamma,\lambda}^{\mathrm{tail}}f\right\|_{L^p(dw)}
\|\widetilde G_\Gamma g\|_{L^{p'}(dw)}\\
&\leq
C_p\|A\|_\infty
\left\|\widetilde G_{\Gamma,\lambda}^{\mathrm{tail}}f\right\|_{L^p(dw)}
\|g\|_{L^{p'}(dw)}.
\end{align*}
Taking the supremum over $\|g\|_{p'}\leq1$ proves convergence for $f\in\mathcal S(\R^n)$. The uniform bounds for $\mathcal T_A^\lambda$ and density extend the result to $f\in L^p(dw)$.
\end{proof}

\subsection{Canonical lifts of Euclidean martingale transforms}

Suppose a Euclidean Fourier multiplier is obtained by projecting a space--time martingale transform with a constant matrix $E$. We now identify the corresponding matrix acting on the extended Dunkl coefficient space. Set
\[
\mathcal H_\kappa
:=\R^n\oplus\ell^2(\RR_+^\kappa)\oplus\R.
\]
The three components correspond to the spatial Brownian coordinates, the jump martingales, and the vertical Brownian coordinate. Thus
\[
\widetilde\nabla U_f
=
\bigl(\nabla_xU_f,(J^\alpha U_f)_{\alpha\in\RR_+^\kappa},\partial_yU_f\bigr)^\top.
\]

\begin{definition}
The horizontal Dunkl compression map is
\[
\mathsf S_\kappa:
\R^n\oplus\ell^2(\RR_+^\kappa)\longrightarrow\R^n,
\qquad
\mathsf S_\kappa(a,q)
:=a+\sum_{\alpha\in\RR_+^\kappa}\beta_\alpha q_\alpha,
\]
where $\beta_\alpha=\sqrt{\kappa(\alpha)}\alpha$. The full space--time compression is
\[
\mathcal C_\kappa:\mathcal H_\kappa\longrightarrow\R^n\oplus\R,
\qquad
\mathcal C_\kappa(a,q,r):=\bigl(\mathsf S_\kappa(a,q),r\bigr).
\]
\end{definition}

Defining
\[
\mathcal B_\kappa
:=\begin{bmatrix}\beta_{\alpha_1}&\cdots&\beta_{\alpha_m}\end{bmatrix}
\in\mathcal M_{n\times m}(\R),
\]
we remark that
\[
\mathcal C_\kappa
=
\begin{bmatrix}
I_n&\mathcal B_\kappa&\mathbf0_{n\times1}\\
\mathbf0_{1\times n}&\mathbf0_{1\times m}&1
\end{bmatrix},
\qquad
\mathcal C_\kappa\widetilde\nabla U_f
=\bigl(\nabla_\kappa U_f,\partial_yU_f\bigr)^\top.
\]

\begin{definition}
Let $E\in\mathcal M_{n+1}(\R)$ be a constant matrix defining a Euclidean space--time martingale transform. Its canonical Dunkl lift is
\begin{equation}
A_E:=\mathcal C_\kappa^*E\mathcal C_\kappa
\in\mathcal M_{n+m+1}(\R).
\label{eq:canonical-dunkl-lift}
\end{equation}
\end{definition}

\begin{proposition}
Let $A=A_E$ be as in \eqref{eq:canonical-dunkl-lift}. Then
\[
\langle\mathcal T_Af,g\rangle
=
2\int_0^\infty\int_{\R^n}
y\left\langle
\bigl(\nabla_\kappa U_g,\partial_yU_g\bigr),
E\bigl(\nabla_\kappa U_f,\partial_yU_f\bigr)
\right\rangle\,dw(x)\,dy.
\]
Moreover, $\mathcal T_A$ is a Dunkl Fourier multiplier with symbol
\begin{equation}
m_E(\xi)
=\frac{\zeta(\xi)^*E\zeta(\xi)}{2|\xi|^2},
\qquad
\zeta(\xi):=\begin{bmatrix}i\xi\\- |\xi|\end{bmatrix},
\qquad \xi\neq0.
\label{eq:explicit-lifted-symbol}
\end{equation}
\end{proposition}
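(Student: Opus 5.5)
The argument has two steps: reduce the bilinear form to Dunkl gradients, then evaluate it on the Dunkl transform side via Plancherel.

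\textbf{Step 1 (bilinear form).} Start from \eqref{eq:limit-TA-bilinear-form} in \cref{thm:dunkl-martingale-projection} with $A=A_E=\mathcal C_\kappa^*E\mathcal C_\kappa$. Since $A_E$ is constant it is bounded, so the theorem applies. Pointwise in $(x,y)$,
\[
\bigl\langle\widetilde\nabla U_g,\mathcal C_\kappa^*E\mathcal C_\kappa\widetilde\nabla U_f\bigr\rangle
=\bigl\langle\mathcal C_\kappa\widetilde\nabla U_g,E\,\mathcal C_\kappa\widetilde\nabla U_f\bigr\rangle .
\]
By the identity $\mathcal C_\kappa\widetilde\nabla U_f=(\nabla_\kappa U_f,\partial_yU_f)^\top$, which is just $\nabla_\kappa=\nabla+\sum_\alpha J^\alpha\beta_\alpha$ written in matrix form, this equals the integrand in the claimed formula. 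Fubini is justified by the absolute integrability coming from \cref{L:augmented-square-function-bound}. This proves the first identity.

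\textbf{Step 2 (symbol).} For $f,g\in\mathcal S(\R^n)$ and fixed $y>0$, apply Dunkl--Plancherel in $x$. We use
\[
\mathcal F_\kappa(T_jU_f)(\xi,y)=i\xi_je^{-y|\xi|}\mathcal F_\kappa f(\xi),\qquad
\mathcal F_\kappa(\partial_yU_f)(\xi,y)=-|\xi|e^{-y|\xi|}\mathcal F_\kappa f(\xi).
\]
Hence $\mathcal F_\kappa(\nabla_\kappa U_f,\partial_yU_f)(\xi,y)=\zeta(\xi)e^{-y|\xi|}\mathcal F_\kappa f(\xi)$.

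Everything here is real-valued, so write the real pairing as $\langle u,Ev\rangle_{L^2(dw)}=\int \overline{\mathcal F_\kappa u}^{\,\top}E\,\mathcal F_\kappa v\,dw$. This uses that $\mathcal F_\kappa$ is unitary with the normalization of \eqref{eq:dunkl-transform}; with that normalization the Plancherel measure is $dw$ and no extra constant appears. The $x$-integral then becomes
\[
\int_{\R^n}\overline{\mathcal F_\kappa g(\xi)}\,\zeta(\xi)^*E\zeta(\xi)\,e^{-2y|\xi|}\,\mathcal F_\kappa f(\xi)\,dw(\xi).
\]
Integrate in $y$ using $2\int_0^\infty ye^{-2y|\xi|}\,dy=\frac{1}{2|\xi|^2}$. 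The interchange is fine because $|\zeta|^2=2|\xi|^2$ gives an absolutely integrable majorant $\|E\|\,|\mathcal F_\kappa f||\mathcal F_\kappa g|$. The result is
\[
\langle\mathcal T_Af,g\rangle=\int m_E\,\mathcal F_\kappa f\,\overline{\mathcal F_\kappa g}\,dw,
\]
which identifies $\mathcal T_A$ as the Dunkl multiplier with symbol $m_E$ on the dense class $\mathcal S(\R^n)$. Boundedness on $L^2(dw)$, which holds since $|m_E|\le\|E\|$, then extends the identification.

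\textbf{Main obstacle.} The delicate point is the conjugation bookkeeping in Step 2. One must use $\overline{\zeta}^{\top}=\zeta^*$ correctly, and check that real $E$ with real functions gives $\zeta^*E\zeta$ rather than a transposed version. Both are consistent because the full form is real, and the skew-symmetric part of $E$ contributes the imaginary, odd part of the symbol.
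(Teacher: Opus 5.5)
Your proposal is correct and follows the paper's proof essentially step for step: you insert the compression identity $\mathcal C_\kappa\widetilde\nabla U_f=(\nabla_\kappa U_f,\partial_yU_f)^\top$ into \eqref{eq:limit-TA-bilinear-form}, then apply Plancherel in $x$ using $\mathcal F_\kappa(\nabla_\kappa U_f,\partial_yU_f)^\top=\zeta(\xi)e^{-y|\xi|}\mathcal F_\kappa f(\xi)$, and finish with $\int_0^\infty 2ye^{-2y|\xi|}\,dy=\frac{1}{2|\xi|^2}$. The extra justifications you supply are correct and fill in details the paper leaves implicit: the integrable majorant coming from $|\zeta|^2=2|\xi|^2$ and $|m_E|\le\|E\|$, and the extension of the multiplier identification from $\mathcal S(\R^n)$ by density.
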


\begin{proof}
The identity \eqref{eq:limit-TA-bilinear-form}, together with \eqref{eq:canonical-dunkl-lift} and the compression identity above, gives
\[
\langle\mathcal T_Af,g\rangle
=
2\int_0^\infty\int_{\R^n}
y\left\langle
\bigl(\nabla_\kappa U_g,\partial_yU_g\bigr),
E\bigl(\nabla_\kappa U_f,\partial_yU_f\bigr)
\right\rangle\,dw(x)\,dy.
\]
On the Dunkl-transform side,
\[
\mathcal F_\kappa\left(\nabla_\kappa U_f,\partial_yU_f\right)^\top(\xi,y)
=\zeta(\xi)e^{-y|\xi|}\mathcal F_\kappa f(\xi).
\]
Plancherel's theorem and Fubini's theorem therefore yield
\begin{align*}
\langle\mathcal T_Af,g\rangle
&=
2\int_0^\infty y\int_{\R^n}
e^{-2y|\xi|}\overline{\mathcal F_\kappa g(\xi)}
\zeta(\xi)^*E\zeta(\xi)\mathcal F_\kappa f(\xi)\,dw(\xi)\,dy.
\end{align*}
Since
\[
\int_0^\infty2ye^{-2y|\xi|}\,dy=\frac1{2|\xi|^2},
\qquad \xi\neq0,
\]
we obtain
\[
\langle\mathcal T_Af,g\rangle
=\int_{\R^n}m_E(\xi)\mathcal F_\kappa f(\xi)
\overline{\mathcal F_\kappa g(\xi)}\,dw(\xi),
\]
which proves \eqref{eq:explicit-lifted-symbol}.
\end{proof}

\subsubsection{First-order Dunkl Riesz transforms}

The $j$th Dunkl Riesz transform is defined by
\[
\mathcal F_\kappa(R_jf)(\xi)
=-i\frac{\xi_j}{|\xi|}\mathcal F_\kappa f(\xi),
\qquad \xi\neq0.
\]
In the Euclidean setting, the corresponding skew-symmetric space--time matrix is
\[
E_j:=E_{n+1,j}-E_{j,n+1}
=\begin{bmatrix}0&-e_j\\e_j^\top&0\end{bmatrix};
\]
see \cite{Ban1}. Its canonical lift is then
\begin{equation}
A_j:=\mathcal C_\kappa^*E_j\mathcal C_\kappa
=
\begin{bmatrix}
\mathbf0_{n\times n}&\mathbf0_{n\times m}&-e_j\\
\mathbf0_{m\times n}&\mathbf0_{m\times m}&-\beta^j\\
e_j^\top&(\beta^j)^\top&0
\end{bmatrix},
\end{equation}
where
$\beta^j:=\bigl(\beta_{\alpha_1}^j,\ldots,\beta_{\alpha_m}^j\bigr)^\top$.
Notice that, since
\[
dX_t^j=dB_t^j+\sum_{\alpha\in\RR_+^\kappa}\beta_\alpha^j\,dM_t^\alpha,
\qquad
T_jU_f=\partial_jU_f+\sum_{\alpha\in\RR_+^\kappa}\beta_\alpha^jJ^\alpha U_f,
\]
a direct computation shows that, up to the stopping time,
\[
d(A_j\ast f)_t
=\left\langle A_j\widetilde\nabla U_f(Z_{t-}),d\mathbf Z_t\right\rangle
=T_jU_f(Z_{t-})\,dY_t-\partial_yU_f(Z_{t-})\,dX_t^j.
\]
Therefore, as in the Euclidean case, the matrix $A_j$ swaps the $j$th Dunkl derivative and the vertical derivative.

\begin{corollary}
For $f,g\in\mathcal S(\R^n)$,
\[
\langle\mathcal T_{A_j}f,g\rangle=\langle R_jf,g\rangle.
\]
Equivalently, $\mathcal T_{A_j}=R_j$.
\end{corollary}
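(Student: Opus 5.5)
The plan is to apply the preceding Proposition with $E=E_j$ and then compute the symbol $m_{E_j}$ from \eqref{eq:explicit-lifted-symbol}. By construction $A_j=\mathcal C_\kappa^*E_j\mathcal C_\kappa$, so the Proposition says $\mathcal T_{A_j}$ is a Dunkl Fourier multiplier with symbol
\[
m_{E_j}(\xi)=\frac{\zeta(\xi)^*E_j\zeta(\xi)}{2|\xi|^2},\qquad \zeta(\xi)=\begin{bmatrix}i\xi\\-|\xi|\end{bmatrix}.
\]
It therefore suffices to check that $m_{E_j}(\xi)=-i\xi_j/|\xi|$ for $\xi\neq0$. The identity $\langle\mathcal T_{A_j}f,g\rangle=\langle R_jf,g\rangle$ then follows from Plancherel for $\mathcal F_\kappa$, exactly as in the last line of the Proposition's proof.

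To compute the symbol, note that $E_j=E_{n+1,j}-E_{j,n+1}$, so for $v\in\mathbb C^{n+1}$ we have $(E_jv)_{n+1}=v_j$ and $(E_jv)_j=-v_{n+1}$, with all other entries zero. Hence
\[
\zeta^*E_j\zeta=\overline{\zeta_{n+1}}\,\zeta_j-\overline{\zeta_j}\,\zeta_{n+1}.
\]
Substituting $\zeta_j=i\xi_j$ and $\zeta_{n+1}=-|\xi|$ gives
\[
(-|\xi|)(i\xi_j)-(-i\xi_j)(-|\xi|)=-2i\xi_j|\xi|.
\]
Dividing by $2|\xi|^2$ yields $m_{E_j}(\xi)=-i\xi_j/|\xi|$, which is the symbol of $R_j$.

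The main point to watch is the sesquilinear convention in the Plancherel step. The Proposition pairs $\overline{\mathcal F_\kappa g}$ with $\zeta^*E\zeta\,\mathcal F_\kappa f$. For real $f,g$ the bilinear form on the physical side is real, and the Fourier side must be interpreted consistently: $\mathcal F_\kappa(\nabla_\kappa U_g)$ enters conjugated, which is why $\zeta^*$ appears rather than $\zeta^\top$. I would recheck that the sign produced by the conjugate agrees with $R_j$ rather than with $-R_j$. As an independent check, I would compare with the Euclidean case $\kappa=0$, where $E_j$ is known to give $R_j$ \cite{Ban1}; the lift commutes with the compression, so the same sign carries over.

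Finally, the identity holds for $f,g\in\mathcal S(\R^n)$. Both operators are bounded on $L^2(dw)$, so density gives $\mathcal T_{A_j}=R_j$. On $L^p(dw)$ it holds in the sense that $\mathcal T_{A_j}$ is the $L^p$-bounded extension furnished by \cref{thm:dunkl-martingale-projection}.
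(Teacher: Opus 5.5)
Your proposal is correct and matches the paper's proof: both apply the lifted-symbol formula \eqref{eq:explicit-lifted-symbol} with $E=E_j$ and compute $\zeta(\xi)^*E_j\zeta(\xi)=-2i\xi_j|\xi|$, which gives $m_{E_j}(\xi)=-i\xi_j/|\xi|$. Your explicit computation already settles the sign, so the extra conjugation recheck and the $\kappa=0$ comparison are not needed.
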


\begin{proof}
By \eqref{eq:explicit-lifted-symbol},
\[
\zeta(\xi)^*E_j\zeta(\xi)
=\begin{bmatrix}-i\xi^\top&-|\xi|\end{bmatrix}
\begin{bmatrix}|\xi|e_j\\i\xi_j\end{bmatrix}
=-2i\xi_j|\xi|.
\]
Therefore
\[
m_{E_j}(\xi)=\frac{-2i\xi_j|\xi|}{2|\xi|^2}
=-i\frac{\xi_j}{|\xi|}.
\]
\end{proof}

\begin{remark}
Let $L_j$ denote the lower-triangular part of $A_j$, so that $A_j=L_j-L_j^\top$. Applying \eqref{eq:explicit-lifted-symbol} to the corresponding lower-triangular Euclidean matrix gives
\[
m_{L_j}(\xi)=-\frac{i\xi_j}{2|\xi|}.
\]
Hence $\mathcal T_{L_j}=\frac12R_j$.
Likewise, if $U_j:=-L_j^\top$, then $\mathcal T_{U_j}=\frac12R_j$. Thus either triangular lift may be used to represent one half of the first-order Dunkl Riesz transform.
\end{remark}

\section{$G$-invariant functions and dimension-free bounds}
\label{sec:invariant-sharp-bounds}
For a general function $f\in\mathcal S(\R^n)$, the full matrix lift $A_j$ introduces reflection jumps driven by $\partial_yU_f\,dM^\alpha$ which generally destroy both differential subordination and optional orthogonality. When $f$ is $G$-invariant, however, its Poisson martingale becomes continuous and orthogonality is recovered, but differential subordination for the full lift may still fail because the transformed martingale can retain reflection jumps.

To obtain dimension-free estimates, we use the lower-triangular representation $L_j$ instead of $A_j$. On invariant inputs, the resulting transform is continuous and differentially subordinate to the Poisson martingale. This yields root-system-independent bounds for both the individual Riesz transforms and the Riesz vector. The argument does not recover the sharp cotangent constant for a single directional transform, which is currently an open question.

\subsection{Covariations and the failure of differential subordination}

Let $f\in\mathcal S(\R^n)$, and consider the uncentered stopped Poisson martingale
\[
W_t:=U_f(Z_{t\wedge\tau^\lambda}^\lambda)
=U_f(Z_0^\lambda)+(I\ast f)_t^\lambda
\]
and its transform $V_t:=(A_j\ast f)_t^\lambda$. In particular,
$W_0=P_\lambda f(X_0)$ and $W_{\tau^\lambda}=f(X_{\tau^\lambda})$.
In what follows, all martingales are stopped at $\tau^\lambda$, and all the identities are understood up to this stopping time. The stochastic differentials are
\begin{align*}
dW_t
&=
\sum_{k=1}^n\partial_kU_f(Z_{t-}^\lambda)\,dB_t^k
+
\sum_{\alpha\in\RR_+^\kappa}J^\alpha U_f(Z_{t-}^\lambda)\,dM_t^\alpha
+
\partial_yU_f(Z_{t-}^\lambda)\,dY_t^\lambda,\\
dV_t
&=
-\partial_yU_f(Z_{t-}^\lambda)\,dB_t^j
-
\sum_{\alpha\in\RR_+^\kappa}\beta_\alpha^j\partial_yU_f(Z_{t-}^\lambda)\,dM_t^\alpha
+
T_jU_f(Z_{t-}^\lambda)\,dY_t^\lambda.
\end{align*}

\begin{proposition}
\label[proposition]{prop:optional_brackets_general}
For every $f\in\mathcal S(\R^n)$,
\begin{align*}
d[W,W]_t
&=
\left(|\nabla_xU_f(Z_{t-}^\lambda)|^2+|\partial_yU_f(Z_{t-}^\lambda)|^2\right)dt
+
\sum_{\alpha\in\RR_+^\kappa}|J^\alpha U_f(Z_{t-}^\lambda)|^2\,d[M^\alpha,M^\alpha]_t,\\
d[V,V]_t
&=
\left(|T_jU_f(Z_{t-}^\lambda)|^2+|\partial_yU_f(Z_{t-}^\lambda)|^2\right)dt
+
|\partial_yU_f(Z_{t-}^\lambda)|^2
\sum_{\alpha\in\RR_+^\kappa}(\beta_\alpha^j)^2\,d[M^\alpha,M^\alpha]_t,\\
d[W,V]_t
&=
\partial_yU_f(Z_{t-}^\lambda)
\sum_{\alpha\in\RR_+^\kappa}\beta_\alpha^jJ^\alpha U_f(Z_{t-}^\lambda)
\bigl(dt-d[M^\alpha,M^\alpha]_t\bigr).
\end{align*}
\end{proposition}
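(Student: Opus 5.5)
The plan is to compute all three brackets from the stochastic differentials of $W$ and $V$ displayed just before the statement, using bilinearity of the optional covariation together with the bracket structure of the driving martingale $\mathbf Z^\lambda$. The first step is to record the optional brackets of the coordinates. $B^1,\ldots,B^n$ and $Y^\lambda$ are independent standard Brownian motions, so $d[B^k,B^l]_t=\delta_{kl}\,dt$, $d[Y^\lambda]_t=dt$ and $[B^k,Y^\lambda]\equiv0$. By the martingale decomposition theorem, $[B^j,M^\alpha]\equiv0$ and $[M^\alpha,M^\beta]\equiv0$ for $\alpha\neq\beta$. Finally, $[M^\alpha,Y^\lambda]\equiv0$, because $M^\alpha$ is a purely discontinuous finite-variation martingale and $Y^\lambda$ is continuous, so \eqref{eq:optional-covariation-decomposition} gives zero. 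The only nontrivial diagonal entries are therefore $d[M^\alpha,M^\alpha]_t$. Since $M^\alpha$ has finite variation, it has no continuous martingale part, and $d[M^\alpha,M^\alpha]_t$ is the pure-jump measure $\sum_s|\Delta M^\alpha_s|^2\delta_s$. It is kept as is rather than replaced by $dt$, which would only be its predictable compensator.

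The second step is to justify the bracket rule for stochastic integrals with respect to semimartingales. The integrands $H_{t-}=\partial_kU_f(Z^\lambda_{t-})$, $J^\alpha U_f(Z^\lambda_{t-})$ and so on are left-continuous, adapted and locally bounded; this uses smoothness of $U_f$ for $f\in\mathcal S(\R^n)$ and the fact that $J^\alpha U_f$ extends continuously across $H_\alpha$. For such integrands,
\[
d\Bigl[\int H\,dN,\int K\,dN'\Bigr]_t=H_{t-}K_{t-}\,d[N,N']_t .
\]
Applying this together with bilinearity, the cross terms in $[W,W]$ and $[V,V]$ vanish by the orthogonality relations of the first step. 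What remains is the Brownian part, which contributes $|\nabla_xU_f|^2+|\partial_yU_f|^2$ for $W$ and $|\partial_yU_f|^2+|T_jU_f|^2$ for $V$, plus the jump parts $\sum_\alpha|J^\alpha U_f|^2\,d[M^\alpha]$ and $|\partial_yU_f|^2\sum_\alpha(\beta^j_\alpha)^2\,d[M^\alpha]$ respectively. This gives the first two formulas.

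For $[W,V]$, pairing the $dB^k$ terms gives $-\partial_jU_f\,\partial_yU_f\,dt$, and pairing the $dY^\lambda$ terms gives $\partial_yU_f\,T_jU_f\,dt$. The $M^\alpha$ terms give $-\beta^j_\alpha\,\partial_yU_f\,J^\alpha U_f\,d[M^\alpha,M^\alpha]_t$. Using
\[
T_jU_f=\partial_jU_f+\sum_\alpha\beta^j_\alpha J^\alpha U_f,
\]
the $dt$ terms combine into $\partial_yU_f\sum_\alpha\beta^j_\alpha J^\alpha U_f\,dt$. Adding the jump terms then yields the stated expression with the factor $dt-d[M^\alpha,M^\alpha]_t$. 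All identities are understood up to $\tau^\lambda$ by stopping.

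The main point of care, rather than a real obstacle, is to avoid conflating optional and predictable brackets: one must not replace $d[M^\alpha,M^\alpha]_t$ by $dt$, and one must verify that $[M^\alpha,Y^\lambda]$ and $[M^\alpha,B^k]$ vanish optionally, not merely predictably. Both facts follow from the continuity of $B$ and $Y^\lambda$ combined with the finite-variation, purely discontinuous nature of $M^\alpha$, or directly from the Gallardo--Yor theorem for $B$.
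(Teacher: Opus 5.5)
Your proposal is correct and follows essentially the paper's route. The paper splits $W$ and $V$ into continuous and purely discontinuous parts and computes $[W^c,V^c]$ and $[W^d,V^d]$ separately, which is the same bilinear bookkeeping you carry out coordinate by coordinate, including the use of $T_jU_f=\partial_jU_f+\sum_\alpha\beta^j_\alpha J^\alpha U_f$ to combine the $dt$ terms of $[W,V]$. Your explicit check that $[M^\alpha,Y^\lambda]$ and $[M^\alpha,B^k]$ vanish optionally, since each pairs a continuous martingale with a finite-variation pure-jump one, is exactly what the paper's splitting uses implicitly.
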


\begin{proof}
Decompose $W=W_0+W^c+W^d$ and $V=V^c+V^d$ into their initial values, continuous Brownian parts and purely discontinuous parts. The continuous brackets satisfy
\begin{align*}
d[W^c,W^c]_t
&=\left(|\nabla_xU_f|^2+|\partial_yU_f|^2\right)(Z_{t-}^\lambda)\,dt,\\
d[V^c,V^c]_t
&=\left(|T_jU_f|^2+|\partial_yU_f|^2\right)(Z_{t-}^\lambda)\,dt,\\
d[W^c,V^c]_t
&=\partial_yU_f(Z_{t-}^\lambda)
\left(\sum_{\alpha\in\RR_+^\kappa}\beta_\alpha^jJ^\alpha U_f(Z_{t-}^\lambda)\right)dt.
\end{align*}
Pairwise optional orthogonality of the martingales $(M^\alpha)_\alpha$ gives
\begin{align*}
[W^d,W^d]_t
&=\sum_{\alpha\in\RR_+^\kappa}\int_0^t
|J^\alpha U_f(Z_{s-}^\lambda)|^2\,d[M^\alpha,M^\alpha]_s,\\
[V^d,V^d]_t
&=\sum_{\alpha\in\RR_+^\kappa}(\beta_\alpha^j)^2\int_0^t
|\partial_yU_f(Z_{s-}^\lambda)|^2\,d[M^\alpha,M^\alpha]_s,\\
[W^d,V^d]_t
&=-\sum_{\alpha\in\RR_+^\kappa}\int_0^t
\beta_\alpha^jJ^\alpha U_f(Z_{s-}^\lambda)
\partial_yU_f(Z_{s-}^\lambda)\,d[M^\alpha,M^\alpha]_s.
\end{align*}
Adding the continuous and discontinuous contributions proves the result.
\end{proof}

\begin{remark}
By predictable orthonormality of the martingale basis and skew-symmetry,
\[
d\langle W,W\rangle_t
=|\widetilde\nabla U_f(Z_{t-}^\lambda)|^2\,dt,
\qquad
d\langle V,V\rangle_t
\leq\|A_j\|_{\mathrm{op}}^2\,d\langle W,W\rangle_t,
\]
and
\[
d\langle W,V\rangle_t
=\left\langle\widetilde\nabla U_f(Z_{t-}^\lambda),
A_j\widetilde\nabla U_f(Z_{t-}^\lambda)\right\rangle dt=0.
\]
Thus $V/\|A_j\|_{\mathrm{op}}$ is predictably subordinate to $W$, while $V$ is predictably orthogonal to $W$. These predictable properties do not imply the corresponding optional ones. Indeed, at a reflection jump of type $\alpha$,
\[
|\Delta V_s|
=|\beta_\alpha^j\partial_yU_f(Z_{s-}^\lambda)|\,|\Delta M_s^\alpha|,
\qquad
|\Delta W_s|
=|J^\alpha U_f(Z_{s-}^\lambda)|\,|\Delta M_s^\alpha|,
\]
and there is no uniform pointwise comparison between these two quantities.
\end{remark}

\subsection{Dimension-free bounds via the triangular representation}
Suppose now that $f$ is $G$-invariant. Then $J^\alpha U_f\equiv0$ and $T_jU_f=\partial_jU_f$. It follows from \cref{prop:optional_brackets_general} that $W$ is continuous and that $[W,V]\equiv0$. Nevertheless, at a reflection jump of type $\alpha$,
\[
\Delta W_s=0,
\qquad
\Delta V_s=-\beta_\alpha^j\partial_yU_f(Z_{s-}^\lambda)\Delta M_s^\alpha.
\]
Thus differential subordination for the full lift may still fail. However, the augmented gradient takes values in the reflection-free subspace
\[
\mathcal H_{\mathrm{inv}}
:=\left\{(a,0,r)\in\mathcal H_\kappa:a\in\R^n,\ r\in\R\right\}.
\]
Consider the lower-triangular part of $A_j$:
\[
L_j
=\begin{bmatrix}
\mathbf0_{n\times n}&\mathbf0_{n\times m}&\mathbf0_{n\times1}\\
\mathbf0_{m\times n}&\mathbf0_{m\times m}&\mathbf0_{m\times1}\\
e_j^\top&(\beta^j)^\top&0
\end{bmatrix}.
\]
\begin{proposition}
\label[proposition]{prop:invariant_martingale_structure}
The matrix $L_j$ is a contraction on $\mathcal H_{\mathrm{inv}}$. Therefore, if $f\in\mathcal S(\R^n)$ is $G$-invariant, the scalar martingale transforms
\[
V_t^j:=(L_j\ast f)_t^\lambda,
\qquad j=1,\ldots,n,
\]
and the vector martingale
\[
\mathbf V_t^\lambda
:=\bigl((L_1\ast f)_t^\lambda,\ldots,(L_n\ast f)_t^\lambda\bigr)
\]
are both differentially subordinate to $W$.
\end{proposition}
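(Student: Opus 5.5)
The plan is to compute the action of $L_j$ on a vector of $\mathcal H_{\mathrm{inv}}$ directly, and then pass from this pointwise linear-algebra fact to the optional brackets through \cref{prop:optional_brackets_general}-type computations.

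\textbf{Contraction on $\mathcal H_{\mathrm{inv}}$.} For $v=(a,0,r)\in\mathcal H_{\mathrm{inv}}$, the only nonzero row of $L_j$ is the last one, which gives
\[
L_jv=(0,0,\,a_j+\langle\beta^j,0\rangle)=(0,0,a_j).
\]
Hence $|L_jv|=|a_j|\leq|a|\leq|v|$, so $L_j$ is a contraction on $\mathcal H_{\mathrm{inv}}$. The same computation gives the vector version: the concatenation $(L_1v,\ldots,L_nv)$ has norm $\bigl(\sum_j a_j^2\bigr)^{1/2}=|a|\leq|v|$.

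\textbf{Differential subordination for $G$-invariant $f$.} Since $f$ is $G$-invariant, so is $U_f(\cdot,y)$ for each $y$, because the Poisson semigroup commutes with the $G$-action. Hence $J^\alpha U_f\equiv0$ and $\widetilde\nabla U_f(Z_{s-})\in\mathcal H_{\mathrm{inv}}$. Then
\[
dV^j_t=\langle L_j\widetilde\nabla U_f,d\mathbf Z_t\rangle=\partial_jU_f(Z_{t-})\,dY_t^\lambda.
\]
The coefficients of $dM^\alpha$ are the $\alpha$-entries of $L_j\widetilde\nabla U_f$, and these are zero. So $V^j$ is a continuous martingale driven only by $Y^\lambda$, and
\[
d[V^j]_t=|\partial_jU_f|^2\,dt.
\]
The vector martingale $\mathbf V$ is likewise continuous, with $d[\mathbf V]_t=|\nabla_xU_f|^2\,dt$.

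On the other side, \cref{prop:optional_brackets_general} with $J^\alpha U_f=0$ gives
\[
d[W]_t=\bigl(|\nabla_xU_f|^2+|\partial_yU_f|^2\bigr)\,dt,
\]
and $W$ has no jumps. Therefore
\[
d[W]_t-d[\mathbf V]_t=|\partial_yU_f|^2\,dt\geq0,
\]
and similarly $d[W]_t-d[V^j]_t\geq0$. The jump condition holds trivially, since $\Delta V^j=0=\Delta W$. For the initial values, $V^j_0=0$ and $\mathbf V_0=0$, so $|V_0|\leq|W_0|$ holds. This verifies both criteria from \cite{W95}.

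\textbf{Expected obstacle.} The only delicate point is justifying that $J^\alpha U_f\equiv0$, which reduces to the $G$-invariance of $U_f$. I would argue this from the $G$-equivariance of the Dunkl heat kernel, $h_t(gx,gy)=h_t(x,y)$, combined with the subordination formula \eqref{eq:dunkl-poisson-subordination}. It also needs to be checked on the reflection hyperplanes, but those carry zero occupation time, so they do not affect the brackets.
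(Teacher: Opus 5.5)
Your proposal is correct and follows the paper's proof essentially step for step. The shared steps are the computation $L_j(a,0,r)=(0,0,a_j)$, the identifications $dV^j_t=\partial_jU_f(Z^\lambda_{t-})\,dY_t^\lambda$ and $d\mathbf V_t^\lambda=\nabla_xU_f(Z^\lambda_{t-})\,dY_t^\lambda$, the comparison $d[V^j]_t\leq d[\mathbf V^\lambda]_t=|\nabla_xU_f|^2\,dt\leq d[W]_t$, and the vanishing initial values. You also justify $J^\alpha U_f\equiv0$ through the $G$-invariance of the Dunkl heat kernel and the subordination formula, which the paper only asserts; this is a correct addition.
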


\begin{proof}
If $z=(a,0,r)\in\mathcal H_{\mathrm{inv}}$, then
\[
L_j(a,0,r)=(0,0,a_j),
\qquad
|L_j(a,0,r)|=|a_j|\leq\sqrt{|a|^2+r^2}=|z|_{\mathcal H_\kappa}.
\]
Hence $\|L_j|_{\mathcal H_{\mathrm{inv}}}\|_{\mathrm{op}}\leq1$. In addition,
\[
dV_t^j=\partial_jU_f(Z_{t-}^\lambda)\,dY_t^\lambda,
\qquad
d\mathbf V_t^\lambda=\nabla_xU_f(Z_{t-}^\lambda)\,dY_t^\lambda.
\]
Therefore
\[
d[V^j,V^j]_t
\leq d[\mathbf V^\lambda]_t
=|\nabla_xU_f(Z_{t-}^\lambda)|^2\,dt
\leq d[W,W]_t.
\]
Since $V_0^j=0$ and $\mathbf V_0^\lambda=0$, the initial-value conditions also hold, which proves both statements.
\end{proof}

\begin{corollary}[Dimension-free bounds for invariant functions]
\label[corollary]{cor:invariant-bounds}
For every $1<p<\infty$ and every $G$-invariant function $f\in L^p(dw)$,
\[
\|\mathbf Rf\|_{L^p(dw;\ell_n^2)}
\leq2(p^*-1)\|f\|_{L^p(dw)}.
\]
In particular, for each $1\leq j\leq n$,
\[
\|R_jf\|_{L^p(dw)}\leq2(p^*-1)\|f\|_{L^p(dw)}.
\]
\end{corollary}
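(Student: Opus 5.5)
The plan is to combine \cref{prop:invariant_martingale_structure} with the sharp inequality of \cref{thm:sharp-martingale-inequalities}, and then pass to the limit $\lambda\to\infty$ using the projection identity $\mathcal T_{L_j}=\frac12R_j$ from the Remark after the Riesz corollary. Fix a $G$-invariant $f\in\mathcal S(\R^n)$; the general $G$-invariant $f\in L^p(dw)$ is handled at the end by density, since $G$-invariant Schwartz functions are dense in the $G$-invariant part of $L^p(dw)$ (average over $G$) and $\mathbf R$ is bounded on $L^p(dw)$ by \cref{thm:dunkl-martingale-projection}.

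\emph{Step 1 (martingale inequality).} By \cref{prop:invariant_martingale_structure}, $\mathbf V^\lambda\ll W$ as $\ell^2_n$-valued martingales, pathwise under each $\mathbb P_{(x,\lambda)}$. \Cref{thm:sharp-martingale-inequalities} then gives
\[
\mathbb E^{(x,\lambda)}|\mathbf V^\lambda_{\tau^\lambda}|^p\leq(p^*-1)^p\,\mathbb E^{(x,\lambda)}|f(X_{\tau^\lambda})|^p .
\]
Here $W$ is uniformly bounded because $|U_f|\leq\|f\|_\infty$, so the supremum over $t$ is attained at $\tau^\lambda$. Integrating in $dw(x)$ and using the exit identity of \cref{lambda exp}, we get $\mathbb E^\lambda|\mathbf V^\lambda_{\tau^\lambda}|^p\leq(p^*-1)^p\|f\|_p^p$. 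The step to watch is the initial-value condition. Note that $W_0=P_\lambda f(X_0)\neq0$ while $\mathbf V_0=0$; this is harmless, because differential subordination only requires $|\mathbf V_0|\leq|W_0|$, which holds, so no centering of $W$ is needed.

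\emph{Step 2 (projection).} Define $\mathcal T^\lambda_{\mathbf L}f:=(\mathcal T^\lambda_{L_1}f,\dots,\mathcal T^\lambda_{L_n}f)=\mathbb E^\lambda[\mathbf V^\lambda_{\tau^\lambda}\mid X_{\tau^\lambda}=\cdot]$. Conditional Jensen for the convex function $|\cdot|_{\ell^2_n}^p$ holds under the $\sigma$-finite measure $\mathbb P^\lambda$, since it disintegrates over the law $dw$ of $X_{\tau^\lambda}$. This yields $\|\mathcal T^\lambda_{\mathbf L}f\|_{L^p(dw;\ell^2_n)}\leq(p^*-1)\|f\|_p$.

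\emph{Step 3 (limit).} \Cref{thm:dunkl-martingale-projection} gives $\mathcal T^\lambda_{L_j}f\to\mathcal T_{L_j}f=\frac12R_jf$ in $L^p(dw)$ for each $j$, so the vector converges in $L^p(dw;\ell^2_n)$ to $\frac12\mathbf Rf$. Hence $\|\mathbf Rf\|_{L^p(\ell^2_n)}\leq2(p^*-1)\|f\|_p$. The scalar bound follows from $|R_jf|\leq|\mathbf Rf|$, or directly by running the same argument with $V^j$.

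The main obstacle is justifying Step 2 rigorously with the $\sigma$-finite measure and the conditional expectation defined through the exit distribution. I would handle it by duality: for $\mathbf g\in L^{p'}(dw;\ell^2_n)$, write $\langle\mathcal T^\lambda_{\mathbf L}f,\mathbf g\rangle=\mathbb E^\lambda[\langle\mathbf V^\lambda_{\tau^\lambda},\mathbf g(X_{\tau^\lambda})\rangle]$. Applying H\"older under $\mathbb P^\lambda$ together with the exit identity for $|\mathbf g|^{p'}$ then bounds this pairing by the Step 1 estimate times $\|\mathbf g\|_{p'}$, which avoids Jensen altogether.
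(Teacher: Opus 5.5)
Your proposal is correct and follows the paper's proof essentially step for step. The steps are differential subordination from \cref{prop:invariant_martingale_structure} under each initial law, the sharp inequality of \cref{thm:sharp-martingale-inequalities}, integration in $x$ via \cref{lambda exp}, the conditional projection, the limit $\lambda\to\infty$ together with $\mathcal T_{L_j}=\frac12R_j$, and finally density of $G$-invariant Schwartz functions. Your duality justification of the projection step is an equivalent way of writing the conditional Jensen inequality that the paper invokes.
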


\begin{proof}
First assume that $f\in\mathcal S(\R^n)$ is $G$-invariant. Once differential subordination has been established, the passage to the operator estimate is standard in harmonic analysis. By conditional Jensen's inequality and \cref{thm:sharp-martingale-inequalities}, using \cref{prop:invariant_martingale_structure} under each initial law and then integrating in $x$, we obtain
\begin{align*}
\left\|\bigl(\mathcal T_{L_1}^\lambda f,\ldots,\mathcal T_{L_n}^\lambda f\bigr)\right\|_{L^p(dw;\ell_n^2)}
&\leq\|\mathbf V_{\tau^\lambda}^\lambda\|_{L^p(\mathbb P^\lambda;\ell_n^2)}\\
&\leq(p^*-1)\|W_{\tau^\lambda}\|_{L^p(\mathbb P^\lambda)}\\
&=(p^*-1)\|f\|_{L^p(dw)},
\end{align*}
where the last equality follows from $W_{\tau^\lambda}=f(X_{\tau^\lambda})$ and \cref{lambda exp}.
Taking $\lambda\to\infty$, using the strong convergence of the truncated projections, and recalling that $\mathcal T_{L_j}=\frac12R_j$, gives the vector estimate. The scalar estimate follows by taking one coordinate. Finally, the estimate extends to every $G$-invariant $f\in L^p(dw)$ by the density of $G$-invariant Schwartz functions in the invariant subspace of $L^p(dw)$.
\end{proof}

\subsection{The one-dimensional case}

In dimension one, $G$-invariant functions are precisely the even functions. The preceding corollary and duality therefore give a multiplicity-independent bound for the Dunkl Hilbert transform on all inputs.

\begin{corollary}
\label[corollary]{cor:rank-one-bound}
Let $n=1$ and $H_\kappa:=R_1$. For every $1<p<\infty$ and every $f\in L^p(dw)$,
\[
\|H_\kappa f\|_{L^p(dw)}
\leq4(p^*-1)\|f\|_{L^p(dw)},
\]
uniformly in $\kappa\geq0$.
\end{corollary}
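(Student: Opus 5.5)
In dimension one the reflection group is $G=\{\pm1\}$, generated by $x\mapsto -x$. The $G$-invariant functions are therefore exactly the even functions, and every $f\in L^p(dw)$ splits as $f=f_e+f_o$ with $f_e(x)=\frac12(f(x)+f(-x))$ and $f_o(x)=\frac12(f(x)-f(-x))$. Since $w_\kappa(x)=|x|^{2\kappa}$ is even, each part satisfies $\|f_e\|_{L^p(dw)},\|f_o\|_{L^p(dw)}\leq\|f\|_{L^p(dw)}$ by the triangle inequality. The plan is to bound $H_\kappa f_e$ by \cref{cor:invariant-bounds}, to bound $H_\kappa f_o$ by duality, and then to add the two estimates.

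\textbf{Step 1: the even part.} Applying \cref{cor:invariant-bounds} with $n=1$ gives
\[
\|H_\kappa f_e\|_{L^p(dw)}\leq 2(p^*-1)\|f_e\|_{L^p(dw)}\leq 2(p^*-1)\|f\|_{L^p(dw)}
\]
for every $1<p<\infty$.

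\textbf{Step 2: parity and duality.} I first need two structural facts about $H_\kappa$, which I will derive from the Dunkl transform.
\begin{enumerate}
\item $H_\kappa$ maps even functions to odd functions and odd functions to even functions. In rank one, $\mathcal F_\kappa$ commutes with the reflection $x\mapsto-x$, since $E_\kappa(-x,-y)=E_\kappa(x,y)$ and $w_\kappa$ is even. The symbol $-i\,\mathrm{sgn}(\xi)$ is odd, so multiplying by it reverses parity.
\item $H_\kappa$ is skew-adjoint on $L^2(dw)$, because its symbol is purely imaginary, with $\overline{-i\,\mathrm{sgn}\,\xi}=-(-i\,\mathrm{sgn}\,\xi)$.
\end{enumerate}

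Now take $g$ in $L^{p'}(dw)$ with $\|g\|_{p'}\leq1$. By fact (1), $H_\kappa f_o$ is even, so it pairs to zero against odd functions, and
\[
\langle H_\kappa f_o,g\rangle=\langle H_\kappa f_o,g_e\rangle=-\langle f_o,H_\kappa g_e\rangle,
\]
where the second equality uses fact (2). Applying Step 1 in the exponent $p'$, and noting that $(p')^*=p^*$, gives
\[
|\langle H_\kappa f_o,g\rangle|\leq\|f_o\|_p\,2(p^*-1)\|g_e\|_{p'}\leq 2(p^*-1)\|f\|_p.
\]
Taking the supremum over such $g$ yields $\|H_\kappa f_o\|_{L^p(dw)}\leq 2(p^*-1)\|f\|_{L^p(dw)}$.

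\textbf{Step 3: conclusion.} Adding the bounds from Steps 1 and 2 gives the constant $4(p^*-1)$. This constant does not depend on $\kappa$, because the constant in \cref{cor:invariant-bounds} does not.

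The only delicate point is justifying the parity and skew-adjointness facts in Step 2, and doing so rigorously in $L^p$. I would verify both on Schwartz functions through the symbol and then extend by density, using the $L^p$-boundedness of $H_\kappa=2\mathcal T_{L_1}$ from \cref{thm:dunkl-martingale-projection}. The case $\kappa=0$ is the classical Hilbert transform and is covered by the same argument.
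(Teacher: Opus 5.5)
Your proposal is correct and follows essentially the same route as the paper: you bound the even part by \cref{cor:invariant-bounds}, you bound the odd part by duality against even test functions using the skew-adjointness of $H_\kappa$ and the fact that it interchanges parity, and you add the two bounds using that the even/odd projections are contractions on $L^p(dw)$. Your remark that the invariant bound is applied in the exponent $p'$, with $(p')^*=p^*$, makes explicit a step the paper leaves implicit.
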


\begin{proof}
The measure $dw$ is even in the one dimensional case. Also, the multiplier definition gives
\[
H_\kappa\bigl(f(-\cdot)\bigr)=-(H_\kappa f)(-\cdot),
\qquad
H_\kappa^*=-H_\kappa.
\]
Thus $H_\kappa$ interchanges even and odd functions. By \cref{cor:invariant-bounds}, for every even $g \in L^p$
\[
\|H_\kappa g\|_p\leq2(p^*-1)\|g\|_p.
\]
If $h$ is odd, then $H_\kappa h$ is even, so duality may be restricted to even test functions. Consequently,
\begin{align*}
\|H_\kappa h\|_p
&=\sup_{\substack{g\ \mathrm{even}\\\|g\|_{p'}\leq1}}
|\langle H_\kappa h,g\rangle|
=\sup_{\substack{g\ \mathrm{even}\\\|g\|_{p'}\leq1}}
|\langle h,H_\kappa g\rangle| \leq2(p^*-1)\|h\|_p.
\end{align*}
For arbitrary $f$, write
\[
f=f_{\mathrm e}+f_{\mathrm o},
\qquad
f_{\mathrm e}(x):=\frac{f(x)+f(-x)}2,
\qquad
f_{\mathrm o}(x):=\frac{f(x)-f(-x)}2.
\]
Both projections are contractions on $L^p(dw)$. Applying the even and odd estimates gives
\[
\|H_\kappa f\|_p
\leq2(p^*-1)\bigl(\|f_{\mathrm e}\|_p+\|f_{\mathrm o}\|_p\bigr)
\leq4(p^*-1)\|f\|_p.
\]
\end{proof}

\section{Bellman function estimates for non-invariant functions}

In this section we prove estimates for the Dunkl Riesz transforms without
assuming that the input is $G$-invariant. We still use the matrix
$L_j$ introduced in the previous section, yet differential
subordination to the Poisson martingale fails for non-invariant functions. We therefore apply It\^o's formula
directly to Burkholder's function and study both parts of its
drift term. The continuous contribution may be positive, but its defect is measured
by the same coefficient as the negative contribution produced by the
reflection jumps. After passing from the jump sum to its predictable
compensator through the L\'evy kernel, the two terms
cancel. We restate \cref{single riesz,vector} for general inputs. Define
\begin{equation}
\theta_j
:=\|A_j\|_{\mathrm{op}}
=\left(1+\sum_{\alpha\in\RR_+}(\beta_\alpha^j)^2\right)^{1/2}.
\end{equation}

\begin{theorem}
\label{Riesz}
For every $1<p<\infty$ and every $j\in\{1,\ldots,n\}$,
\[
\|R_jf\|_{L^p(dw)}
\leq2(p^*-1)\theta_j\|f\|_{L^p(dw)}.
\]
In particular, since $|\alpha|^2=2$,
\[
\|R_jf\|_{L^p(dw)}
\leq2(p^*-1)\sqrt{1+2\gamma}\,\|f\|_{L^p(dw)}.
\]
\end{theorem}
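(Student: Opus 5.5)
The plan is to prove the bound for the half-transform $\mathcal T_{L_j}=\frac12R_j$ in the range $p\geq2$, and then recover $1<p<2$ by duality. Since $R_j^*=-R_j$ on $L^2(dw)$ and $p^*=(p')^*$, the $L^{p'}$ bound transfers to $L^p$ with the same constant. So fix $p\geq2$ and $f\in\mathcal S(\R^n)$.

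\textbf{Setup and the continuous part.} Let $W_t=U_f(Z^\lambda_{t\wedge\tau^\lambda})$ be the Poisson martingale and $V_t=(L_j\ast f)^\lambda_t$. As in the proof of \cref{prop:invariant_martingale_structure}, $dV_t=T_jU_f(Z^\lambda_{t-})\,dY^\lambda_t$, so $V$ is continuous. Put $\widetilde V:=V/\theta_j$. Since $T_jU_f=\partial_jU_f+\sum_\alpha\beta^j_\alpha J^\alpha U_f$, Cauchy--Schwarz gives
\[
|T_jU_f|^2\le\theta_j^2\Bigl(|\partial_jU_f|^2+\sum_{\alpha}|J^\alpha U_f|^2\Bigr).
\]
Hence, comparing with \cref{prop:optional_brackets_general},
\[
d[\widetilde V]_t-d[W^c]_t\le\sum_{\alpha\in\RR_+^\kappa}|J^\alpha U_f(Z^\lambda_{t-})|^2\,dt .
\]
So differential subordination fails for the continuous parts, but only by the reflection energy. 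This is the same quantity that $r_\alpha(\Delta_\alpha U_f)^2=|J^\alpha U_f|^2$ produces when the jumps of $W$ are compensated.

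\textbf{Bellman step.} Take Burkholder's function
\[
\mathcal B(x,y)=p\bigl(1-1/p^*\bigr)^{p-1}\bigl(|y|-(p^*-1)|x|\bigr)\bigl(|x|+|y|\bigr)^{p-1},
\]
which majorizes $|y|^p-(p^*-1)^p|x|^p$ and satisfies $\mathcal B(x,y)\le0$ when $|y|\le|x|$. In particular $\mathcal B(W_0,\widetilde V_0)=\mathcal B(W_0,0)\le0$. Apply It\^o's formula for semimartingales with jumps to $\mathcal B(W,\widetilde V)$, after a standard mollification of $\mathcal B$ and localization.

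For the continuous drift I will use the classical estimate, valid for $p\ge2$ (with $p^*=p$):
\[
\mathcal B_{xx}h^2+2\mathcal B_{xy}hk+\mathcal B_{yy}k^2\le -c(x,y)\,(h^2-k^2),\qquad c(x,y)=p(p-1)(1-1/p)^{p-1}\ldots(|x|+|y|)^{p-2}\ge0.
\]
This yields
\[
\mathcal C^{\mathcal B}\le \tfrac12 c(W_{t-},\widetilde V_{t-})\sum_\alpha|J^\alpha U_f|^2\,dt.
\]

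For the jumps, only $W$ jumps, by $\Delta_\alpha U_f$. I need a finite-difference inequality of the form
\[
\mathcal B(x+h,y)-\mathcal B(x,y)-\mathcal B_x(x,y)h\le -\tfrac12 c(x,y)h^2 .
\]
The natural route is to show that $x\mapsto \mathcal B(x,y)+\frac12c(x_0,y)x^2$ is concave, or at least lies below its tangent at $x_0$, using the explicit form of $\mathcal B$ and a case split on $|y|\le(p-1)|x|$ and its complement. After passing to the predictable compensator via \eqref{compensation formula}, the jump drift is at most
\[
-\tfrac12 c\sum_\alpha r_\alpha(\Delta_\alpha U_f)^2\,dt=-\tfrac12c\sum_\alpha|J^\alpha U_f|^2\,dt,
\]
which exactly cancels the continuous defect. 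Therefore $\mathbb E^{(x,\lambda)}\mathcal B(W_{\tau^\lambda},\widetilde V_{\tau^\lambda})\le0$. Integrating in $dw(x)$ and using \cref{lambda exp},
\[
\|\widetilde V_{\tau^\lambda}\|_{L^p(\mathbb P^\lambda)}\le(p-1)\|f\|_{L^p(dw)}.
\]

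\textbf{Conclusion and main obstacle.} Conditional Jensen gives $\|\mathcal T^\lambda_{L_j}f\|_p\le(p-1)\theta_j\|f\|_p$. Letting $\lambda\to\infty$ with \cref{thm:dunkl-martingale-projection}, and using $R_j=2\mathcal T_{L_j}$ and density, gives the theorem. The second bound follows from $(\beta^j_\alpha)^2=\kappa(\alpha)\alpha_j^2\le2\kappa(\alpha)$.

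The main obstacle is the finite-difference lemma. The curvature coefficient $c(x,y)\propto(|x|+|y|)^{p-2}$ is not constant along the jump segment, and $\mathcal B$ is not $C^2$ across $|y|=(p-1)|x|$. So the lemma has to be proved with the coefficient frozen at the pre-jump point, which needs a careful one-variable analysis. I would first try to prove it by checking that $\mathcal B(\cdot,y)$ has second derivative $\le -c(x_0,y)$ on the whole relevant ray, exploiting monotonicity of $(|x|+|y|)^{p-2}$. A secondary, routine issue is justifying It\^o's formula and the compensation identity, which needs integrability from \cref{L:augmented-square-function-bound}.
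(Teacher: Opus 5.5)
\textbf{Overall assessment.} Your architecture is the paper's: the lower-triangular transform normalized by $\theta_j$, Burkholder's function, a continuous defect bounded by the reflection energy $\sum_\alpha|J^\alpha U_f|^2$, cancellation of that defect by the compensated jump drift via $r_\alpha(\Delta_\alpha U_f)^2=|J^\alpha U_f|^2$, and duality for $1<p<2$.

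\textbf{Where you differ from the paper.} The one genuine difference is the curvature coefficient. Writing $(w,v)$ for your $(x,y)$, you use the classical Burkholder coefficient $c=K_p(|w|+|v|)^{p-2}$, with $K_p=C_pp(p-1)$. This makes the continuous step off the shelf. The error in $\mathcal B_{ww}|h|^2+2\mathcal B_{wv}\langle h,k\rangle+\mathcal B_{vv}|k|^2\le-c(|h|^2-|k|^2)$ equals $-K_p(p-2)|w|(|w|+|v|)^{p-3}|h\pm k|^2$, so it holds for vector increments, and your Cauchy--Schwarz step finishes it. The paper instead uses the Schur complement $\Lambda_p=K_p(|w|+|v|)^{p-1}/((p-1)|w|+|v|)\le c$. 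That costs the completing-the-square in \cref{lem:continuous-defect}, but it leaves slack in the jump inequality. With your choice, all the difficulty moves into the finite-difference inequality. That is exactly the step you leave unproved, and it is the paper's key new ingredient. So as written, the proposal has a gap at its crux.

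\textbf{The finite-difference inequality.} It is true with your $c$, but it is sharp. For $w>0$ and $h=-2w$, evenness gives $\mathcal B(-w,v)-\mathcal B(w,v)+2w\,\partial_w\mathcal B(w,v)=-2K_pw^2(w+|v|)^{p-2}=-\frac12c\,(2w)^2$.

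Your suggested route of a pointwise bound $\partial_{ww}\mathcal B\le-c(w,v)$ along the jump segment fails. Indeed, $-\partial_{ww}\mathcal B(t,v)=K_p(|t|+|v|)^{p-3}((p-1)|t|+|v|)$ increases in $|t|$ and drops to $K_p|v|^{p-2}<c(w,v)$ at $t=0$ when $p>2$. So the bound fails between $0$ and $w$. That is exactly where jumps toward or across the origin land, and $h_{\alpha,s}$ has no sign.

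What works is the tangent-line argument of \cref{lem:finite-difference} with $\Lambda_p$ replaced by $c$:
\begin{itemize}
\item For $w>0$, let $\Phi(t)=\mathcal B(t,v)$ for $t\ge0$, and $F(t)=\Phi(t)-\Phi(w)-\Phi'(w)(t-w)+\frac12c(t-w)^2$.
\item $F''=c+\Phi''$ is nonincreasing, so $F'$ is concave. Also $F''(w)=-K_p(p-2)w(w+|v|)^{p-3}\le0$ and $F'(w)=0$.
\item Since $\Phi'(w)=-K_pw(w+|v|)^{p-2}$, we get $F'(0)=-\Phi'(w)-cw=0$.
\item Hence $F'\ge0$ on $[0,w]$ and $F'\le0$ on $[w,\infty)$, so $F\le0$ on $[0,\infty)$.
\item For $w+h<0$, set $\widetilde F(s)=\Phi(|s|)-\Phi(w)-\Phi'(w)(s-w)+\frac12c(s-w)^2$. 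Then $\widetilde F(-t)=F(t)+2t(\Phi'(w)+cw)=F(t)\le0$.
\item For $w=0$, $c(0,v)=-\Phi''(0)$, so $\Phi(t)+\frac12ct^2$ is concave with zero slope at $0$.
\end{itemize}
In fact, any coefficient in $[\Lambda_p,c]$ closes both the continuous and the jump inequality.

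\textbf{A smaller point on regularity.} Your worry about $\mathcal B$ failing to be $C^2$ across $|v|=(p-1)|w|$ is misplaced. The single-formula function is smooth off the coordinate axes, and the case split should be on where $w+h$ lands. The singularity that matters is the concave kink along $v=0$, which is where $\widetilde V$ starts. Its distributional $\partial_{vv}$ part is the nonpositive measure $-2C_pp(p-2)|w|^{p-1}\,\mathcal L^1(dw)\otimes\delta_0(dv)$. This sign is what makes the mollified inequalities, with $c*\rho_\varepsilon$ in place of $c$, hold, as in \cref{lem:bellman-regularization}.
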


The same argument applies simultaneously to all the lower-triangular
representations when $p\geq2$. Recall the operator
\[
\mathsf S_\kappa:
\R^n\oplus\ell^2(\RR_+)\longrightarrow\R^n,
\qquad
\mathsf S_\kappa(a,q):=a+\sum_{\alpha\in\RR_+}\beta_\alpha q_\alpha,
\]
and set
\[
\Theta_\kappa:=\|\mathsf S_\kappa\|_{\mathrm{op}}
=\left\|I_{\R^n}
+\sum_{\alpha\in\RR_+}\kappa(\alpha)\,\alpha\otimes\alpha
\right\|_{\mathrm{op}}^{1/2}.
\]
Since $M_\kappa$ is positive semidefinite and $\operatorname{tr}M_\kappa=2\gamma$,
\[
\Theta_\kappa\leq\sqrt{1+2\gamma}.
\]

\begin{theorem}
\label{thm:Riesz-vector-noninvariant}
Let $p\geq2$. Then, for every $f\in L^p(dw)$,
\[
\|\mathbf Rf\|_{L^p(dw;\ell_n^2)}
\leq2(p-1)\Theta_\kappa\|f\|_{L^p(dw)}.
\]
Consequently,
\[
\|\mathbf Rf\|_{L^p(dw;\ell_n^2)}
\leq2(p-1)\sqrt{1+2\gamma}\,\|f\|_{L^p(dw)}.
\]
\end{theorem}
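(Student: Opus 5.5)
The plan is to run the argument of \cref{cor:invariant-bounds} with the vector transform $\mathbf V_t^\lambda=((L_1\ast f)_t^\lambda,\ldots,(L_n\ast f)_t^\lambda)$ for a general $f\in\mathcal S(\R^n)$. Differential subordination is replaced by a direct It\^o computation for Burkholder's function. Since only the last row of each $L_j$ is nonzero, and $e_j^\top\nabla_xU_f+(\beta^j)^\top(J^\alpha U_f)_\alpha=T_jU_f$, we have
\[
d\mathbf V_t^\lambda=\nabla_\kappa U_f(Z_{t-}^\lambda)\,dY_t^\lambda .
\]
In particular $\mathbf V^\lambda$ is continuous and has no reflection jumps. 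Put $\widetilde{\mathbf V}:=\Theta_\kappa^{-1}\mathbf V^\lambda$. Because $\nabla_\kappa U_f=\mathsf S_\kappa(\nabla_xU_f,(J^\alpha U_f)_\alpha)$, we get
\[
d[\widetilde{\mathbf V}]_t=\Theta_\kappa^{-2}|\nabla_\kappa U_f|^2\,dt\le\Bigl(|\nabla_xU_f|^2+\sum_\alpha|J^\alpha U_f|^2\Bigr)dt .
\]
By \cref{prop:optional_brackets_general}, the continuous bracket of $W$ is $(|\nabla_xU_f|^2+|\partial_yU_f|^2)\,dt$. The continuous defect $d[\widetilde{\mathbf V}]-d[W^c]$ is therefore bounded by $\sum_\alpha|J^\alpha U_f|^2\,dt-|\partial_yU_f|^2\,dt$, and the $J^\alpha$ part is exactly the predictable intensity of the jump energy of $W$. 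Indeed, by \eqref{compensation formula} and $r_\alpha|\Delta_\alpha U_f|^2=|J^\alpha U_f|^2$, the compensator of $\sum|\Delta W_s|^2$ is $\sum_\alpha|J^\alpha U_f(Z_{s-})|^2\,ds$. This is the cancellation to exploit.

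Let $p\ge2$ and let $\mathcal B(x,\mathbf y)=p(1-1/p)^{p-1}\bigl(|\mathbf y|-(p-1)|x|\bigr)\bigl(|x|+|\mathbf y|\bigr)^{p-1}$ be Burkholder's function on $\R\times\ell^2_n$. Three properties of $\mathcal B$ are needed.
\begin{enumerate}
\item $\mathcal B(x,\mathbf y)\ge|\mathbf y|^p-(p-1)^p|x|^p$.
\item $\mathcal B(x,\mathbf 0)\le0$.
\item There is a function $c(x,\mathbf y)\ge0$ such that, for all $h\in\R$ and $k\in\ell^2_n$,
\[
\tfrac12\bigl(\partial_{xx}\mathcal B\,h^2+2\langle\partial_{x\mathbf y}\mathcal B\,h,k\rangle+\langle D^2_{\mathbf y}\mathcal B\,k,k\rangle\bigr)\le c(x,\mathbf y)\bigl(|k|^2-h^2\bigr).
\]
\end{enumerate}
Property (3) is Burkholder's concavity bound; to obtain it, first mollify $\mathcal B$ off the singular set $\{|x|(p-1)=|\mathbf y|\}$ and off the origin. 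The new ingredient is a finite-difference inequality with the \emph{same} coefficient:
\[
\mathcal B(x+h,\mathbf y)-\mathcal B(x,\mathbf y)-\partial_x\mathcal B(x,\mathbf y)\,h\le-c(x,\mathbf y)\,h^2 .
\]
This is a one-dimensional statement along horizontal lines. It should follow from the same analysis that gives property (3) with $k=0$, together with the facts that $x\mapsto\mathcal B(x,\mathbf y)$ is piecewise polynomial and lies below its tangent lines with uniform curvature $\ge 2c$. I expect this lemma to be the main obstacle. I would first verify it for the explicit $c=p(p-1)(1-1/p)^{p-1}(|x|+|\mathbf y|)^{p-2}$, and separately handle the region where the segment $[x,x+h]$ crosses the kink. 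If this fails, I would fall back on Burkholder's zigzag concavity, which gives exactly such a ``negative jump'' bound.

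Next, apply It\^o's formula for c\`adl\`ag semimartingales to $\mathcal B(W_t,\widetilde{\mathbf V}_t)$, stopped at $\tau^\lambda$ and localized. All jumps are horizontal, since $\Delta\widetilde{\mathbf V}=0$ and $\Delta W_s=\Delta_\alpha U_f$. The drift is $\mathcal C^{\mathcal B}+\mathcal J^{\mathcal B}$. Property (3) applied to the continuous parts gives
\[
\mathcal C^{\mathcal B}\le c\bigl(\Theta_\kappa^{-2}|\nabla_\kappa U_f|^2-|\nabla_xU_f|^2-|\partial_yU_f|^2\bigr)dt .
\]
Here the cross term is harmless: the $dY$ component of $W$ pairs with $\widetilde{\mathbf V}$, and property (3) handles full vectors $h,k$ once we view the driving noise as multidimensional, so I would apply (3) to the Gaussian parts coordinatewise in $(B,Y)$. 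For the jump part, the finite-difference lemma followed by \eqref{compensation formula} gives
\[
\mathbb E\,\mathcal J^{\mathcal B}\le-\mathbb E\int c\sum_\alpha|J^\alpha U_f|^2\,ds .
\]
Adding the two, the total expected drift is at most $\mathbb E\int c\bigl(\Theta_\kappa^{-2}|\nabla_\kappa U_f|^2-|\nabla_xU_f|^2-\sum_\alpha|J^\alpha U_f|^2\bigr)\,ds\le0$, using $\|\mathsf S_\kappa\|_{\mathrm{op}}=\Theta_\kappa$.

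To conclude, the local martingale parts are true martingales after localization. Integrability comes from \cref{L:augmented-square-function-bound} and \eqref{eq:occupation-identity}, and the limit is justified by Fatou's lemma using property (1). This gives $\mathbb E^\lambda\mathcal B(W_{\tau},\widetilde{\mathbf V}_{\tau})\le\mathbb E^\lambda\mathcal B(W_0,0)\le0$. Hence $\|\mathbf V^\lambda_{\tau^\lambda}\|_{L^p(\ell^2_n)}\le(p-1)\Theta_\kappa\|f\|_{L^p(dw)}$. Conditional Jensen and \cref{thm:dunkl-martingale-projection} then bound $\bigl(\mathcal T^\lambda_{L_j}f\bigr)_j$, and letting $\lambda\to\infty$ and using $\mathcal T_{L_j}=\tfrac12R_j$ yields the factor $2(p-1)\Theta_\kappa$. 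Density extends the estimate to all $f\in L^p(dw)$. The bound by $\sqrt{1+2\gamma}$ is the trace estimate stated just above.
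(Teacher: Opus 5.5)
Your overall route is sound and is the paper's architecture. You apply It\^o's formula for Burkholder's function to $W$ and the continuous transform $\Theta_\kappa^{-1}\int\nabla_\kappa U_f\,dY$. The continuous drift has a defect equal to a coefficient times $\sum_\alpha|J^\alpha U_f|^2$, and a finite-difference inequality with the same coefficient cancels it after the compensation formula.

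The genuine difference is the coefficient. Write $(w,v)$ for your $(x,\mathbf y)$, and set $K_p=C_pp(p-1)$ with $C_p=p(1-1/p)^{p-1}$.
\begin{itemize}
\item You use Burkholder's classical coefficient from (3). This lets you quote the Burkholder--Wang Hilbert-space concavity bound for the whole continuous part, tangential directions included. It even leaves an extra slack $-c|\partial_yU_f|^2$.
\item The paper uses the Schur complement $\tfrac12\Lambda_p$, which is smaller since $\Lambda_p\le K_p(|w|+|v|)^{p-2}$. This sharpens the continuous bound at the price of checking $\mathsf E\le\Lambda_p\le-\mathsf A$. It also leaves room to spare in \cref{lem:finite-difference}.
\end{itemize}
In fact any coefficient between the two works for both inequalities.

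As written, however, the crux is not proved, and your sketch of it fails on three counts.
\begin{enumerate}
\item (3) determines $c$ uniquely. Set $c_B:=\tfrac12K_p(|w|+|v|)^{p-2}$. On radial vectors $k=k_0v/|v|$, the left side of (3) equals
\[
c_B(k_0^2-h^2)-\tfrac12K_p(p-2)|w|(|w|+|v|)^{p-3}(\operatorname{sgn}(w)h+k_0)^2 .
\]
Perturbing $k_0$ around $-\operatorname{sgn}(w)h$ rules out any other coefficient. Your candidate $(p-1)C_p(|w|+|v|)^{p-2}$ is too small by the factor $p/2$. With it the finite-difference bound is true, but (3) fails for $p>2$, so the two coefficients do not match.
\item $w\mapsto\mathcal B(w,v)$ is neither piecewise polynomial (for non-integer $p$) nor uniformly curved. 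For $w\neq0$ and $p>2$ one has $-\partial_{ww}\mathcal B(0,v)=K_p|v|^{p-2}<2c_B(w,v)$, so a Taylor argument breaks down near $0$. Also, $\partial_w\mathcal B$ is continuous, so there is no kink to cross.
\item The zigzag-concavity fallback only gives $\le0$, not $\le-ch^2$.
\end{enumerate}

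The inequality does hold with $c_B$, and the proof is short. The key identity is $\partial_w\mathcal B(w,v)=-2c_B(w,v)\,w$. Fix $w>0$; the case $w<0$ follows by symmetry. Then
\[
g(s):=\mathcal B(s,v)-\mathcal B(w,v)-\partial_w\mathcal B(w,v)(s-w)+c_B(w,v)(s-w)^2
\]
is even in $s$, because $g(-s)-g(s)=2s\bigl(\partial_w\mathcal B(w,v)+2c_B(w,v)w\bigr)=0$. For $s\ge0$ we have $g''(s)=2c_B(w,v)-G(s)$, where $G(s):=K_p(s+|v|)^{p-3}((p-1)s+|v|)$. The function $G$ is nondecreasing and satisfies $G(w)\ge2c_B(w,v)$. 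Moreover $g'(0)=g'(w)=0$. Since $g'$ is concave, $g'\ge0$ on $[0,w]$ and $g'\le0$ on $[w,\infty)$. Hence $g\le g(w)=0$, and the case $w=0$ is easier.

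Finally, the nonsmooth set of $\mathcal B$ is $\{w=0\}\cup\{v=0\}$, not $\{(p-1)|w|=|v|\}$. What the mollification actually needs is that the singular part of $D^2\mathcal B$ on $\{v=0\}$ is nonpositive; this part is present only for $n=1$. Then both inequalities persist for $\mathcal B*\rho_\varepsilon$ with the common coefficient $c_B*\rho_\varepsilon$, as in \cref{lem:bellman-regularization}.
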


\begin{proof}[Proof of \cref{Riesz}]
We first assume that $p\geq2$ and $f\in\mathcal S(\R^n)$. Put
\[
C_p:=p\left(1-\frac1p\right)^{p-1},
\qquad D_p:=p-1,
\]
and consider the classical Burkholder function
\[
\mathcal B(w,v)
:=C_p\bigl(|v|-D_p|w|\bigr)\bigl(|w|+|v|\bigr)^{p-1}.
\]
We shall use the standard inequalities
\[
\mathcal B(w,v)\geq|v|^p-(p-1)^p|w|^p,
\qquad
\mathcal B(w,0)=-C_p(p-1)|w|^p\leq0.
\]

Fix $\lambda>0$, write $\tau:=\tau^\lambda$, and suppress the superscript
$\lambda$ from the processes. As in the preceding section, the source martingale is uncentered. Set
\[
W_t:=U_f(Z_{t\wedge\tau}),
\qquad
V_t:=\frac1{\theta_j}(L_j\ast f)_{t\wedge\tau}
=\frac1{\theta_j}\int_0^{t\wedge\tau}T_jU_f(Z_{s-})\,dY_s.
\]
Setting
\[
c_0:=\frac1{\theta_j},
\qquad
c_\alpha:=\frac{\beta_\alpha^j}{\theta_j},
\]
we have
\[
c_0^2+\sum_{\alpha\in\RR_+}c_\alpha^2=1,
\qquad
\frac{T_jU_f}{\theta_j}
=c_0\partial_jU_f+\sum_{\alpha\in\RR_+}c_\alpha J^\alpha U_f.
\]

The function $\mathcal B$ is not globally of class $C^2$. Let
$\rho\in C_c^\infty(\R^2)$ be nonnegative, even in each variable,
supported in the unit ball, and normalized by
\[
\int_{\R^2}\rho(z)\,dz=1.
\]
For $\varepsilon>0$, set
$\rho_\varepsilon(z):=\varepsilon^{-2}\rho(z/\varepsilon)$ and
$\mathcal B_\varepsilon:=\mathcal B*\rho_\varepsilon$.

Choose a localizing sequence $(\sigma_N)_{N\geq1}$ such that the
stochastic integrals below, stopped at $\sigma_N$, are true martingales
and the compensated jump integrands are integrable. We also stop when $|V|$ reaches $N$. Since $V$ is continuous and $|W|\leq\|f\|_\infty$, the stopped pair is bounded. Fix
$x\in\R^n$ and put $T:=t\wedge\tau\wedge\sigma_N$.
Applying It\^o's formula under $\mathbb P_{(x,\lambda)}$ gives
\begin{align*}
\mathbb E^{(x,\lambda)}\mathcal B_\varepsilon(W_T,V_T)
&=\mathcal B_\varepsilon(P_\lambda f(x),0)
+\mathbb E^{(x,\lambda)}\left[
\int_0^T\mathscr C_\varepsilon(s)\,ds
+\sum_{0<s\leq T}\mathscr J_\varepsilon(s)
\right],
\end{align*}
where
\begin{align*}
\mathscr C_\varepsilon(s)
&:={}
\frac12\partial_{ww}\mathcal B_\varepsilon(W_{s-},V_{s-})
\bigl(|\nabla_xU_f|^2+|\partial_yU_f|^2\bigr)(Z_{s-})\\
&\quad+
\partial_{wv}\mathcal B_\varepsilon(W_{s-},V_{s-})
\partial_yU_f(Z_{s-})\frac{T_jU_f(Z_{s-})}{\theta_j}\\
&\quad+
\frac12\partial_{vv}\mathcal B_\varepsilon(W_{s-},V_{s-})
\left|\frac{T_jU_f(Z_{s-})}{\theta_j}\right|^2
\end{align*}
and, since $V$ is continuous,
\[
\mathscr J_\varepsilon(s)
:=\mathcal B_\varepsilon(W_s,V_{s-})
-\mathcal B_\varepsilon(W_{s-},V_{s-})
-\partial_w\mathcal B_\varepsilon(W_{s-},V_{s-})\Delta W_s.
\]
For $w,v,h\in\R$, write
\[
\mathfrak D_\varepsilon(w,v;h)
:=\mathcal B_\varepsilon(w+h,v)-\mathcal B_\varepsilon(w,v)
-\partial_w\mathcal B_\varepsilon(w,v)h.
\]
Recall that the L\'evy kernel is
\[
\mathcal K_\kappa(x,dz)
=\sum_{\alpha\in\RR_+}r_\alpha(x)\delta_{\sigma_\alpha x-x}(dz),
\qquad
r_\alpha(x):=\frac{\kappa(\alpha)}{\langle\alpha,x\rangle^2}
\ind_{\{\langle\alpha,x\rangle\neq0\}},
\]
with the same conventions as before.
At a reflection jump of type $\alpha$, set
\begin{equation}\label{alpha jump}
h_{\alpha,s}
:=U_f(\sigma_\alpha X_{s-},Y_{s-})-U_f(X_{s-},Y_{s-})
=-\frac{\langle\alpha,X_{s-}\rangle}{\sqrt{\kappa(\alpha)}}J^\alpha U_f(Z_{s-}).
\end{equation}
The compensation formula \eqref{compensation formula} therefore gives
\begin{align*}
\mathbb E^{(x,\lambda)}\left[\sum_{0<s\leq T}\mathscr J_\varepsilon(s)\right]
=\mathbb E^{(x,\lambda)}\left[
\int_0^T\sum_{\alpha\in\RR_+}r_\alpha(X_{s-})
\mathfrak D_\varepsilon\bigl(W_{s-},V_{s-};h_{\alpha,s}\bigr)\,ds
\right].
\end{align*}
Thus it remains to prove the predictable estimate, almost everywhere in time,
\begin{equation}\label{main estimate}
\mathscr C_\varepsilon(s)
+\sum_{\alpha\in\RR_+}r_\alpha(X_{s-})
\mathfrak D_\varepsilon\bigl(W_{s-},V_{s-};h_{\alpha,s}\bigr)
\leq0.
\end{equation}
The two deterministic Bellman inequalities needed for this estimate are
proved next.

\subsection{The scalar Bellman inequalities}
\label{subsec:burkholder-scalar-lemmas}
Away from the coordinate axes, write
\[
\operatorname{Hess}\mathcal B(w,v)
=\begin{bmatrix}
\mathsf A(w,v)&\mathsf C(w,v)\\
\mathsf C(w,v)&\mathsf D(w,v)
\end{bmatrix},
\]
and
\[
\Delta(w,v)
:=\det\bigl(\operatorname{Hess}\mathcal B(w,v)\bigr)
=\mathsf A(w,v)\mathsf D(w,v)-\mathsf C(w,v)^2.
\]
If $K_p:=C_pp(p-1)$, direct differentiation gives
\[
\mathsf A(w,v)
=-K_p(|w|+|v|)^{p-3}\bigl((p-1)|w|+|v|\bigr),
\]
\[
\Delta(w,v)=-K_p^2(|w|+|v|)^{2p-4}.
\]
Define the Schur complement with respect to $\mathsf A$ as
\begin{equation}\label{schur}
\Lambda_p(w,v)
:=\frac{\Delta(w,v)}{\mathsf A(w,v)}
=K_p\frac{(|w|+|v|)^{p-1}}{(p-1)|w|+|v|}.
\end{equation}

At the origin, the coefficient is understood by continuous extension:
\[
\Lambda_2(0,0)=2,
\qquad
\Lambda_p(0,0)=0\quad\text{for }p>2.
\]
In particular, $\Lambda_p\geq0$. Moreover,
\begin{equation}
-\frac{\Lambda_p(w,v)}{\mathsf A(w,v)}
=\left(\frac{|w|+|v|}{(p-1)|w|+|v|}\right)^2
\leq1,
\end{equation}
and therefore $0\leq\Lambda_p(w,v)\leq-\mathsf A(w,v)$.

\begin{lemma}
\label[lemma]{lem:continuous-defect}
Fix a point $(w,v)$ at which $\operatorname{Hess}\mathcal B$ exists and let $c_0$, $\{c_\alpha\}_\alpha$ and $\{q_\alpha\}_\alpha$ be real numbers satisfying
\[
c_0^2+\sum_\alpha c_\alpha^2=1.
\]
Set
\[
d:=\sum_\alpha c_\alpha q_\alpha,
\qquad b:=c_0x+d.
\]
Then, for every $x,y\in\R$,
\[
\frac12\mathsf A(w,v)(x^2+y^2)
+\mathsf C(w,v)yb
+\frac12\mathsf D(w,v)b^2
\leq\frac12\Lambda_p(w,v)\sum_\alpha q_\alpha^2.
\]
\end{lemma}

\begin{proof}
Denote the quadratic form on the left-hand side by $F(x,y)$. Completing the square in the variable $y$ gives
\begin{align*}
F(x,y)
&=\frac12\mathsf A(w,v)x^2
+\frac12\mathsf A(w,v)\left(y+\frac{\mathsf C(w,v)}{\mathsf A(w,v)}b\right)^2
+\frac12\left(\mathsf D(w,v)-\frac{\mathsf C(w,v)^2}{\mathsf A(w,v)}\right)b^2\\
&=\frac12\mathsf A(w,v)x^2
+\frac12\mathsf A(w,v)\left(y+\frac{\mathsf C(w,v)}{\mathsf A(w,v)}b\right)^2
+\frac12\Lambda_p(w,v)b^2.
\end{align*}
Using $\mathsf A(w,v)<0$, the second term can be discarded. Since $\mathsf A(w,v)\leq-\Lambda_p(w,v)$,
\[
\frac12\mathsf A(w,v)x^2+\frac12\Lambda_p(w,v)b^2
\leq-\frac12\Lambda_p(w,v)x^2+\frac12\Lambda_p(w,v)b^2
=\frac12\Lambda_p(w,v)(b^2-x^2).
\]
By Cauchy--Schwarz,
\[
b^2
=\left(c_0x+\sum_\alpha c_\alpha q_\alpha\right)^2
\leq\left(c_0^2+\sum_\alpha c_\alpha^2\right)
\left(x^2+\sum_\alpha q_\alpha^2\right)
=x^2+\sum_\alpha q_\alpha^2,
\]
hence $b^2-x^2\leq\sum_\alpha q_\alpha^2$, which gives the desired result.
\end{proof}

\begin{lemma}
\label[lemma]{lem:finite-difference}
For every $(w,v)\in\R^2$ and every $h\in\R$,
\[
\mathcal B(w+h,v)-\mathcal B(w,v)-\partial_w\mathcal B(w,v)h
\leq-\frac12\Lambda_p(w,v)h^2,
\]
where $\partial_w\mathcal B(0,v):=0$.
\end{lemma}

\begin{proof}
For $p=2$, we have $\mathcal B(w,v)=v^2-w^2$ and $\Lambda_2(w,v)=2$, so the claim holds with equality. We may therefore assume that $p>2$. Fix $y:=|v|$ and define, for $t\geq0$,
\[
\Phi(t):=C_p\bigl(y-(p-1)t\bigr)(t+y)^{p-1}.
\]
Then $\mathcal B(w,v)=\Phi(|w|)$. By evenness in the first variable, it is enough to consider a base point $w\geq0$. Assume first that $w>0$ and put $\Lambda:=\Lambda_p(w,y)>0$. For $t\geq0$, define
\[
F(t):=\Phi(t)-\Phi(w)-\Phi'(w)(t-w)+\frac12\Lambda(t-w)^2.
\]
We have $F(w)=F'(w)=0$, and we first show that $F(t)\leq0$ for $t\geq0$. Then, evaluating at $t=w+h\geq0$ gives the desired inequality. The case $w+h<0$ will be treated later. Define $G(t):=-\Phi''(t)$. Direct computation gives
\begin{gather*}
G(t)=K_p(t+y)^{p-3}\bigl((p-1)t+y\bigr),
\qquad
G'(t)=K_p(p-2)(t+y)^{p-4}\bigl((p-1)t+2y\bigr)\geq0.
\end{gather*}
Hence $G$ is nondecreasing. At the base point,
\[
\frac{G(w)}\Lambda
=\left(\frac{(p-1)w+y}{w+y}\right)^2\geq1.
\]
Consequently, if $t\geq w$, then
\[
F''(t)=\Lambda-G(t)\leq\Lambda-G(w)\leq0.
\]
Since $F'(w)=0$, it follows that $F'(t)\leq0$ and therefore $F(t)\leq F(w)=0$ for every $t\geq w$.

It remains to consider $0\leq t\leq w$. Since $G$ is nondecreasing, $F''$ is nonincreasing and can change sign at most once, from nonnegative to nonpositive. Thus $F'$ is first nondecreasing and then nonincreasing. Moreover,
\[
F'(0)=-\bigl(\Phi'(w)+\Lambda w\bigr)
=K_p(p-2)\frac{w^2(w+y)^{p-2}}{(p-1)w+y}\geq0,
\]
whereas $F'(w)=0$. It follows that $F'(t)\geq0$ on $[0,w]$, and hence
\[
F(t)\leq F(w)=0,\qquad0\leq t\leq w.
\]

We have proved the estimate when $w+h\geq0$. To handle the case $w+h<0$, define
\[
\widetilde F(s)
:=\Phi(|s|)-\Phi(w)-\Phi'(w)(s-w)+\frac12\Lambda(s-w)^2,
\qquad s\in\R.
\]
For $t\geq0$, the preceding argument gives $\widetilde F(t)=F(t)\leq0$, while
\[
\widetilde F(-t)
=F(t)+2t\bigl(\Phi'(w)+\Lambda w\bigr)
\leq F(t)\leq0.
\]
Therefore $\widetilde F(s)\leq0$ for every $s\in\R$, and taking $s=w+h$ proves the result when $w>0$. \\
Finally, let $w=0$ and put $\Lambda:=\Lambda_p(0,y)$. Since $G$ is nondecreasing and equals $\Lambda$ at the origin, the function
\[
t\longmapsto\Phi(t)+\frac12\Lambda t^2
\]
is concave on $[0,\infty)$. Its derivative at the origin is $\Phi'(0)=0$, and therefore
\[
\Phi(t)-\Phi(0)\leq-\frac12\Lambda t^2,
\qquad t\geq0.
\]
The same conclusion holds for negative $t$ by evenness. This completes the proof.
\end{proof}

\subsubsection{Stability of the Bellman inequalities under regularization}

For $\varepsilon>0$, define
\[
\Lambda_{p,\varepsilon}:=\Lambda_p*\rho_\varepsilon.
\]
Similarly, we denote the entries of the regularized Hessian by
\[
\mathsf A_\varepsilon:=\partial_{ww}\mathcal B_\varepsilon,
\qquad
\mathsf C_\varepsilon:=\partial_{wv}\mathcal B_\varepsilon,
\qquad
\mathsf D_\varepsilon:=\partial_{vv}\mathcal B_\varepsilon.
\]
In the following lemma, we prove that the two lemmas above remain true for the regularized Bellman function $\mathcal B_\varepsilon$.
\begin{lemma}[Stability under regularization]
\label[lemma]{lem:bellman-regularization}
For every $\varepsilon>0$, the estimates of \cref{lem:continuous-defect,lem:finite-difference}
remain valid for the regularized Bellman function
$\mathcal B_\varepsilon=\mathcal B*\rho_\varepsilon$.
More precisely, if
\[
b=c_0x+\sum_\alpha c_\alpha q_\alpha,
\qquad c_0^2+\sum_\alpha c_\alpha^2=1,
\]
then
\[
\frac12\mathsf A_\varepsilon(w,v)(x^2+y^2)
+\mathsf C_\varepsilon(w,v)yb
+\frac12\mathsf D_\varepsilon(w,v)b^2
\leq\frac12\Lambda_{p,\varepsilon}(w,v)\sum_\alpha q_\alpha^2.
\]
Moreover, for every $h\in\R$,
\[
\mathcal B_\varepsilon(w+h,v)-\mathcal B_\varepsilon(w,v)
-\partial_w\mathcal B_\varepsilon(w,v)h
\leq-\frac12\Lambda_{p,\varepsilon}(w,v)h^2.
\]
\end{lemma}

\begin{proof}
The only point requiring some care is that $\mathcal B$ is not globally
$C^2$. We therefore interpret its second derivatives distributionally.
The first derivative in the $w$-variable is continuous across $w=0$,
while across $v=0$ one has
\[
D_{vv}\mathcal B
=\partial_{vv}\mathcal B\,\mathcal L^2
-2C_pp(p-2)|w|^{p-1}\,
\mathcal L^1(dw)\otimes\delta_0(dv).
\]
For fixed $x,y,b$ and $\{q_\alpha\}_\alpha$, the distribution
\[
\mathscr Q
=\frac12D_{ww}\mathcal B\,(x^2+y^2)
+D_{wv}\mathcal B\,yb
+\frac12D_{vv}\mathcal B\,b^2
-\frac12\Lambda_p\sum_\alpha q_\alpha^2
\]
is nonpositive. Indeed, its absolutely continuous part is nonpositive
by \cref{lem:continuous-defect}, while its singular part is
\[
-C_pp(p-2)b^2|w|^{p-1}\,
\mathcal L^1(dw)\otimes\delta_0(dv),
\]
which is nonpositive for $p\geq2$. Convolving $\mathscr Q$ with the nonnegative mollifier
$\rho_\varepsilon$ and using that distributional derivatives
commute with convolution gives immediately
\[
\frac12\mathsf A_\varepsilon(x^2+y^2)
+\mathsf C_\varepsilon yb
+\frac12\mathsf D_\varepsilon b^2
\leq\frac12\Lambda_{p,\varepsilon}\sum_\alpha q_\alpha^2.
\]

The finite-difference estimate is even more direct. By convolution, the left-hand side of the desired inequality is equal to
\[
\int_{\R^2}\Bigl[
\mathcal B(w-\eta+h,v-\zeta)
-\mathcal B(w-\eta,v-\zeta)
-\partial_w\mathcal B(w-\eta,v-\zeta)h
\Bigr]\rho_\varepsilon(\eta,\zeta)\,d\eta\,d\zeta,
\]
and applying \cref{lem:finite-difference} gives the desired bound.
\end{proof}

\begin{remark}
Since $\mathcal B_\varepsilon$ is smooth, dividing the last inequality in the preceding proof by $h^2$ and letting $h\to0$ also yields
\[
\mathsf A_\varepsilon(w,v)\leq-\Lambda_{p,\varepsilon}(w,v).
\]
\end{remark}

\subsection{Completion of the scalar proof}

We now prove \eqref{main estimate}. At a fixed time $s<T$, abbreviate
\[
x:=\partial_jU_f(Z_{s-}),
\qquad y:=\partial_yU_f(Z_{s-}),
\]
\[
q_\alpha:=J^\alpha U_f(Z_{s-}),
\qquad b:=\frac{T_jU_f(Z_{s-})}{\theta_j}.
\]
Discarding the terms containing $\partial_kU_f$, $k\neq j$, which have nonpositive coefficients, and applying \cref{lem:bellman-regularization}, we obtain
\[
\mathscr C_\varepsilon(s)
\leq\frac12\Lambda_{p,\varepsilon}(W_{s-},V_{s-})
\sum_{\alpha\in\RR_+}|J^\alpha U_f(Z_{s-})|^2.
\]

For $h_{\alpha,s}$ as in \eqref{alpha jump}, the regularized finite-difference estimate gives
\[
\mathfrak D_\varepsilon\bigl(W_{s-},V_{s-};h_{\alpha,s}\bigr)
\leq-\frac12\Lambda_{p,\varepsilon}(W_{s-},V_{s-})|h_{\alpha,s}|^2.
\]
Since, almost everywhere in time,
\[
r_\alpha(X_{s-})|h_{\alpha,s}|^2=|J^\alpha U_f(Z_{s-})|^2,
\]
the compensated jump drift satisfies
\[
\sum_{\alpha\in\RR_+}r_\alpha(X_{s-})
\mathfrak D_\varepsilon\bigl(W_{s-},V_{s-};h_{\alpha,s}\bigr)
\leq-\frac12\Lambda_{p,\varepsilon}(W_{s-},V_{s-})
\sum_{\alpha\in\RR_+}|J^\alpha U_f(Z_{s-})|^2.
\]
Thus the possible positive defect of the continuous drift is cancelled exactly
by the negative contribution of the compensated reflection jumps:
\[
\mathscr C_\varepsilon(s)
+\sum_{\alpha\in\RR_+}r_\alpha(X_{s-})
\mathfrak D_\varepsilon\bigl(W_{s-},V_{s-};h_{\alpha,s}\bigr)
\leq0.
\]

Returning to the localized It\^o formula and using the compensation
identity, we obtain, for $dw$-almost every $x\in\R^n$,
\[
\mathbb E^{(x,\lambda)}\mathcal B_\varepsilon(W_T,V_T)
\leq\mathcal B_\varepsilon(P_\lambda f(x),0).
\]
For fixed $x,t$ and $N$, the stopped processes are bounded. Hence the
locally uniform convergence
$\mathcal B_\varepsilon\to\mathcal B$ allows us to let
$\varepsilon\downarrow0$ and conclude that
\[
\mathbb E^{(x,\lambda)}\mathcal B(W_T,V_T)
\leq\mathcal B(P_\lambda f(x),0)\leq0.
\]
The majorization property of $\mathcal B$ and conditional Jensen's inequality, using $W_T=\mathbb E^{(x,\lambda)}[f(X_\tau)\mid\Sigma_T]$, now yield
\[
\mathbb E^{(x,\lambda)}|V_T|^p
\leq(p-1)^p\mathbb E^{(x,\lambda)}|f(X_\tau)|^p.
\]
Integrating in $x$, using \cref{lambda exp}, and then applying Fatou's lemma twice, we obtain
\begin{equation*}
\mathbb E^\lambda|V_\tau|^p
\leq(p-1)^p\|f\|_{L^p(dw)}^p.
\end{equation*}
The conditional projection and the passage $\lambda\to\infty$ are
identical to those in the preceding section, and
$\mathcal T_{L_j}=\frac12R_j$ proves the stated estimate for $p\geq2$. For $1<p<2$, the identity $R_j^*=-R_j$, duality and the estimate for $p'$ give
\[
\|R_j\|_{L^p(dw)\to L^p(dw)}
=\|R_j\|_{L^{p'}(dw)\to L^{p'}(dw)}
\leq2(p'-1)\theta_j
=2(p^*-1)\theta_j.
\]
\end{proof}

\subsection{The non-invariant Riesz vector}

\begin{proof}[Proof of \cref{thm:Riesz-vector-noninvariant}]
Let $p\geq2$ and initially take $f\in\mathcal S(\R^n)$. Fix $\lambda>0$ and use $\tau$, $Z$ and the uncentered source martingale $W_t=U_f(Z_{t\wedge\tau})$ as in the scalar proof. Define the
$\R^n$-valued continuous martingale
\[
\mathbf V_t
:=\frac1{\Theta_\kappa}\int_0^{t\wedge\tau}\nabla_\kappa U_f(Z_{s-})\,dY_s.
\]
Its $j$th component is
\[
\mathbf V_t^j=\frac1{\Theta_\kappa}(L_j\ast f)_{t\wedge\tau}.
\]
The argument follows the scalar proof, with one additional feature: the Hessian in the vector variable has both a radial and a tangential component. We first isolate the deterministic estimate needed to control these two contributions. Write
\[
a:=\nabla_xU_f,
\qquad
q:=(J^\alpha U_f)_{\alpha\in\RR_+},
\qquad
b:=\frac1{\Theta_\kappa}\nabla_\kappa U_f
=\frac1{\Theta_\kappa}\mathsf S_\kappa(a,q).
\]
By the definition of $\Theta_\kappa$, we have $|b|^2\leq|a|^2+|q|^2$.
Consider, for $(w,v)\in\R\times\R^n$, the vector-valued analogue of the scalar Bellman function
\[
\mathcal B(w,v):=\mathcal B(w,|v|)
=C_p\bigl(|v|-(p-1)|w|\bigr)\bigl(|w|+|v|\bigr)^{p-1}.
\]
For $v\neq0$, set $r:=|v|$ and $e:=v/|v|$. Decomposing any vector $b\in\R^n$ into radial and tangential components,
\[
b_0:=\langle b,e\rangle,
\qquad b_1:=b-b_0e,
\]
the corresponding quadratic form simplifies to
\[
\left\langle\operatorname{Hess}_v\mathcal B(w,v)b,b\right\rangle
=\mathsf D(w,|v|)b_0^2+\mathsf E(w,|v|)|b_1|^2,
\]
where $\mathsf E(w,r):=\partial_r\mathcal B(w,r)/r$.
As in the scalar setting, the Schur complement $\Lambda_p(w,r)$ is given by \eqref{schur}, and
\begin{align*}
\mathsf E(w,r)
&=C_pp(|w|+r)^{p-2}\frac{r-(p-2)|w|}{r},\\
\Lambda_p(w,r)-\mathsf E(w,r)
&=C_pp(p-2)(|w|+r)^{p-2}
\frac{(p-1)|w|^2+|w|r+r^2}{r\bigl((p-1)|w|+r\bigr)}\geq0.
\end{align*}
Together with $\Lambda_p\leq-\mathsf A$, this gives
\begin{equation}\label{eq:vector-radial-coefficients}
\mathsf E(w,r)\leq\Lambda_p(w,r)\leq-\mathsf A(w,r).
\end{equation}
The following estimate is the vector counterpart of \cref{lem:continuous-defect}. It is the only additional deterministic ingredient needed for the Riesz vector.

\begin{lemma}
\label[lemma]{lem:vector-continuous-defect}
Let $a\in\R^n$, $q\in\R^m$, $r\in\R$, and set
\[
b:=\frac1{\Theta_\kappa}\mathsf S_\kappa(a,q).
\]
Let $(w,v)$ satisfy $w\neq0$ and $v\neq0$, and let $e,b_0,b_1$ be as above. Then
\begin{align*}
\frac12\mathsf A(w,|v|)\bigl(|a|^2+r^2\bigr)
+\mathsf C(w,|v|)rb_0
+\frac12\mathsf D(w,|v|)b_0^2
+\frac12\mathsf E(w,|v|)|b_1|^2
\leq\frac12\Lambda_p(w,|v|)|q|^2.
\end{align*}
\end{lemma}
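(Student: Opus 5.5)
The plan is to reduce the statement to the scalar argument of \cref{lem:continuous-defect}, treating the tangential term separately through \eqref{eq:vector-radial-coefficients}. Abbreviate $\mathsf A,\mathsf C,\mathsf D,\mathsf E,\Lambda_p$ for their values at $(w,|v|)$; all are finite since $w\neq0$ and $v\neq0$. Denote the left-hand side by $F$.

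First, complete the square in the variable $r$ exactly as in the scalar lemma:
\[
\frac12\mathsf A r^2+\mathsf C rb_0
=\frac12\mathsf A\Bigl(r+\frac{\mathsf C}{\mathsf A}b_0\Bigr)^2-\frac{\mathsf C^2}{2\mathsf A}b_0^2 .
\]
Since $\mathsf A<0$, the square can be discarded. Because $\mathsf D-\mathsf C^2/\mathsf A=\Delta/\mathsf A=\Lambda_p$, this gives
\[
F\leq\frac12\mathsf A|a|^2+\frac12\Lambda_p b_0^2+\frac12\mathsf E|b_1|^2 .
\]

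Second, control the tangential coefficient. By \eqref{eq:vector-radial-coefficients}, $\mathsf E\leq\Lambda_p$. Since $|b_1|^2\geq0$, we get $\frac12\mathsf E|b_1|^2\leq\frac12\Lambda_p|b_1|^2$. This holds whatever the sign of $\mathsf E$; note that $\mathsf E$ may be negative when $|v|<(p-2)|w|$, but only the upper bound is used. Combining this with the orthogonal decomposition $b_0^2+|b_1|^2=|b|^2$ yields
\[
F\leq\frac12\mathsf A|a|^2+\frac12\Lambda_p|b|^2 .
\]
Now $\mathsf A\leq-\Lambda_p$ and $\Lambda_p\geq0$, so
\[
F\leq\frac12\Lambda_p\bigl(|b|^2-|a|^2\bigr).
\]

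Third, bound $|b|^2$. By the definition of $\Theta_\kappa=\|\mathsf S_\kappa\|_{\mathrm{op}}$, we have $|b|^2=\Theta_\kappa^{-2}|\mathsf S_\kappa(a,q)|^2\leq|a|^2+|q|^2$. This replaces the Cauchy--Schwarz step of the scalar proof, and it gives $F\leq\frac12\Lambda_p|q|^2$, as required.

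The only step needing genuine input beyond the scalar case is the inequality $\mathsf E\leq\Lambda_p$, which the paper records in \eqref{eq:vector-radial-coefficients}. If one wanted to double-check it, the plan would be to verify the displayed formula for $\Lambda_p-\mathsf E$ by direct algebra. This amounts to writing $\Lambda_p=K_p(|w|+r)^{p-1}/((p-1)|w|+r)$ with $K_p=C_pp(p-1)$, putting both terms over the common denominator $r((p-1)|w|+r)$, and simplifying the numerator. Everything else is a verbatim transcription of \cref{lem:continuous-defect}, with $x$ replaced by the vector $a$ and the scalar $b$ replaced by its radial component.
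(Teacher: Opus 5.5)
Your proposal is correct and follows the paper's proof step for step. You complete the square in $r$ to isolate the Schur complement $\Lambda_p$, use $\mathsf E\leq\Lambda_p\leq-\mathsf A$ to reduce to $\frac12\Lambda_p\bigl(|b|^2-|a|^2\bigr)$, and conclude with $|b|^2\leq|a|^2+|q|^2$ from the definition of $\Theta_\kappa$.
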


\begin{proof}
Completing the square in $r$ isolates the Schur complement $\Lambda_p(w,|v|)$:
\begin{align*}
\frac12\mathsf A(w,|v|)r^2
+\mathsf C(w,|v|)rb_0
+\frac12\mathsf D(w,|v|)b_0^2
&=\frac12\mathsf A(w,|v|)
\left(r+\frac{\mathsf C(w,|v|)}{\mathsf A(w,|v|)}b_0\right)^2
+\frac12\Lambda_p(w,|v|)b_0^2\\
&\leq\frac12\Lambda_p(w,|v|)b_0^2,
\end{align*}
where we discarded the first term since $\mathsf A(w,|v|)<0$. Using \eqref{eq:vector-radial-coefficients}, we get
\begin{align*}
\frac12\mathsf A(w,|v|)|a|^2
+\frac12\Lambda_p(w,|v|)b_0^2
+\frac12\mathsf E(w,|v|)|b_1|^2
&\leq\frac12\Lambda_p(w,|v|)\bigl(b_0^2+|b_1|^2-|a|^2\bigr)\\
&=\frac12\Lambda_p(w,|v|)\bigl(|b|^2-|a|^2\bigr).
\end{align*}
Since $|b|^2-|a|^2\leq|q|^2$ by the definition of $\Theta_\kappa$, the claim follows.
\end{proof}

The passage to a smooth Bellman function is the same as in the scalar case.
Choose a nonnegative mollifier $\rho\in C_c^\infty(\R\times\R^n)$ which is even in the first variable, radial in the second variable, and has integral one. Put $\rho_\varepsilon(z):=\varepsilon^{-(n+1)}\rho(z/\varepsilon)$ and set
\[
\mathcal B_\varepsilon:=\mathcal B*\rho_\varepsilon,
\qquad
\Lambda_{p,\varepsilon}:=\Lambda_p*\rho_\varepsilon,
\]
where $\Lambda_p$ is interpreted radially in the second variable.
For $n\geq2$, the $1/|v|$ singularity of the tangential Hessian coefficient is locally integrable and creates no additional measure supported on $v=0$. For $n=1$, the singular contribution is the nonpositive measure treated in \cref{lem:bellman-regularization}. Thus the same distributional argument gives
\begin{align*}
\frac12\partial_{ww}\mathcal B_\varepsilon(w,v)\bigl(|a|^2+r^2\bigr)
+rD^2_{wv}\mathcal B_\varepsilon(w,v)[b]
+\frac12D^2_{vv}\mathcal B_\varepsilon(w,v)[b,b]
\leq\frac12\Lambda_{p,\varepsilon}(w,v)|q|^2.
\end{align*}
The finite-difference estimate requires no new argument. Indeed, for each fixed $v\in\R^n$, the function $w\longmapsto\mathcal B(w,v)$ is the scalar Bellman function with second variable $|v|$. Hence the regularized analogue of \cref{lem:finite-difference} gives
\[
\mathcal B_\varepsilon(w+h,v)-\mathcal B_\varepsilon(w,v)
-\partial_w\mathcal B_\varepsilon(w,v)h
\leq-\frac12\Lambda_{p,\varepsilon}(w,v)h^2.
\]
We now apply It\^o's formula to $\mathcal B_\varepsilon(W,\mathbf V)$ under each initial law $\mathbb P_{(x,\lambda)}$, with the same localization as in the scalar proof. Its continuous drift density is
\begin{align*}
\mathscr C_\varepsilon(s)
={}&\frac12\partial_{ww}\mathcal B_\varepsilon(W_{s-},\mathbf V_{s-})
\left(|\nabla_xU_f(Z_{s-})|^2+|\partial_yU_f(Z_{s-})|^2\right)\\
&+\frac{\partial_yU_f(Z_{s-})}{\Theta_\kappa}
\left\langle\nabla_v\partial_w\mathcal B_\varepsilon(W_{s-},\mathbf V_{s-}),
\nabla_\kappa U_f(Z_{s-})\right\rangle\\
&+\frac1{2\Theta_\kappa^2}
\left\langle\operatorname{Hess}_v\mathcal B_\varepsilon(W_{s-},\mathbf V_{s-})
\nabla_\kappa U_f(Z_{s-}),\nabla_\kappa U_f(Z_{s-})\right\rangle.
\end{align*}
Applying the regularized vector estimate with $a,q,b$ as above and $r=\partial_yU_f(Z_{s-})$ gives
\[
\mathscr C_\varepsilon(s)
\leq\frac12\Lambda_{p,\varepsilon}(W_{s-},\mathbf V_{s-})
\sum_{\alpha\in\RR_+}|J^\alpha U_f(Z_{s-})|^2.
\]
Analogously, the compensated jump drift is
\begin{align*}
\mathscr J_\varepsilon(s)
:=\sum_{\alpha\in\RR_+}r_\alpha(X_{s-})
\Bigl[&\mathcal B_\varepsilon(W_{s-}+h_{\alpha,s},\mathbf V_{s-})
-\mathcal B_\varepsilon(W_{s-},\mathbf V_{s-})\\
&-\partial_w\mathcal B_\varepsilon(W_{s-},\mathbf V_{s-})h_{\alpha,s}\Bigr],
\end{align*}
where $h_{\alpha,s}$ is defined in \eqref{alpha jump}. The regularized finite-difference estimate gives
\[
\mathscr J_\varepsilon(s)
\leq-\frac12\Lambda_{p,\varepsilon}(W_{s-},\mathbf V_{s-})
\sum_{\alpha\in\RR_+}|J^\alpha U_f(Z_{s-})|^2.
\]
Hence $\mathscr C_\varepsilon(s)+\mathscr J_\varepsilon(s)\leq0$. The same arguments as in the scalar proof give
\[
\|\mathbf Rf\|_{L^p(dw;\ell_n^2)}
\leq2(p-1)\Theta_\kappa\|f\|_{L^p(dw)}.
\]
\end{proof}

\begin{remark}
The proof uses $p\geq2$ both in the finite-difference estimate and in the comparisons
\[
\mathsf E\leq\Lambda_p\leq-\mathsf A,
\]
which control the tangential component of the vector Hessian. The scalar bound below $2$ follows from $R_j^*=-R_j$. The same duality argument does not give the corresponding vector bound, since the adjoint of $\mathbf R$ is the Riesz divergence
$\mathbf g\mapsto-\sum_{j=1}^nR_jg_j$, rather than a scalar-to-vector operator. Interpolating with the $L^2$ estimate gives
\end{remark}

\begin{corollary}
\label[corollary]{cor:interpolated-riesz-bounds}
Let $q>2$ and $2\leq p\leq q$, and set
\[
\vartheta(p,q):=\frac{q(p-2)}{p(q-2)}.
\]
Then
\[
\|\mathbf Rf\|_{L^p(dw;\ell_n^2)}
\leq\bigl[2(q-1)\Theta_\kappa\bigr]^{\vartheta(p,q)}
\|f\|_{L^p(dw)}.
\]
In dimension one, for every $1<p<\infty$ and every $q>2$ with $q\geq p^*$,
\[
\|H_\kappa f\|_{L^p(dw)}
\leq\bigl[4(q-1)\bigr]^{\vartheta(p^*,q)}\|f\|_{L^p(dw)},
\]
uniformly in $\kappa$. Both estimates give the exact constant $1$ at $p=2$.
\end{corollary}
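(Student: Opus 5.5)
The plan is Riesz--Thorin interpolation between the Plancherel identity at $L^2$ and the endpoint estimates already proved. First I would check that the vector-valued operator $\mathbf R:L^p(dw)\to L^p(dw;\ell_n^2)$ is a single linear operator, defined consistently on $L^2\cap L^q$. On the Dunkl-transform side, $\sum_j|\xi_j/|\xi||^2=1$ for $\xi\neq0$, so Plancherel gives $\|\mathbf Rf\|_{L^2(dw;\ell^2_n)}=\|f\|_{L^2(dw)}$. By \cref{thm:Riesz-vector-noninvariant}, $\|\mathbf R\|_{L^q\to L^q(\ell_n^2)}\leq 2(q-1)\Theta_\kappa$ for $q>2$.

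Riesz--Thorin is valid for operators with values in $L^p(dw;\ell_n^2)$, since $\ell_n^2$ is a Hilbert space and the complex interpolation of these Bochner spaces is the expected one. It gives, for $\frac1p=\frac{1-\vartheta}{2}+\frac{\vartheta}{q}$,
\[
\|\mathbf R\|_{L^p\to L^p(\ell_n^2)}\leq 1^{1-\vartheta}\bigl[2(q-1)\Theta_\kappa\bigr]^{\vartheta}.
\]
Solving $\frac12-\frac1p=\vartheta\left(\frac12-\frac1q\right)$ yields $\vartheta=\frac{q(p-2)}{p(q-2)}=\vartheta(p,q)$, which lies in $[0,1]$ for $2\leq p\leq q$. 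At $p=2$ we get $\vartheta=0$, hence the constant $1$.

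For the one-dimensional statement I would use \cref{cor:rank-one-bound}, which gives $\|H_\kappa\|_{L^q\to L^q}\leq4(q-1)$ uniformly in $\kappa$ for $q>2$. Plancherel gives $\|H_\kappa\|_{L^2\to L^2}=1$. Interpolating between $2$ and $q$ handles $2\leq p\leq q$ with exponent $\vartheta(p,q)=\vartheta(p^*,q)$. For $1<p<2$, I would use duality, $H_\kappa^*=-H_\kappa$, so $\|H_\kappa\|_{L^p\to L^p}=\|H_\kappa\|_{L^{p'}\to L^{p'}}$ with $p'=p^*\leq q$. This gives the bound $[4(q-1)]^{\vartheta(p^*,q)}$.

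The only genuinely delicate point is the vector-valued interpolation. If one prefers to avoid Bochner-space interpolation, linearize instead: for fixed unit-norm $\mathbf g\in L^{p'}(\ell^2_n)$, interpolate the scalar map $f\mapsto\langle\mathbf Rf,\mathbf g\rangle$ by Stein's analytic family. Alternatively, cite the standard vector-valued Riesz--Thorin theorem for Hilbert-valued $L^p$. Everything else is a routine computation of $\vartheta$.
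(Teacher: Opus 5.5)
Your proposal is correct and follows the paper's own route: the paper obtains the corollary simply by interpolating the $L^q$ bounds of \cref{thm:Riesz-vector-noninvariant} and \cref{cor:rank-one-bound} against the Plancherel identity $\|\mathbf Rf\|_{L^2(dw;\ell_n^2)}=\|f\|_{L^2(dw)}$, and your computation of $\vartheta(p,q)$ and your duality step $H_\kappa^*=-H_\kappa$ for $1<p<2$ are both right. One point is left implicit, in the paper as well as in your write-up: the Bellman endpoint bounds are proved for real-valued $f$, while complex Riesz--Thorin needs them for complex-valued inputs; for the scalar $H_\kappa$ this is automatic by the Marcinkiewicz--Zygmund rotation argument, and for $\mathbf R$ one can rerun the Bellman argument with $W=(U_{\operatorname{Re}f},U_{\operatorname{Im}f})$ taking values in $\R^2$.
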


\section*{AI disclosure}
The authors used OpenAI's GPT-5.6 Sol as an assistive tool for refinement of the exposition and checks of algebraic consistency. The underlying proof strategies were developed by the authors, who take full responsibility for the content of the manuscript.

\appendix

\bibliographystyle{amsplain}
\bibliography{biblio}
\end{document}